\numberwithin{equation}{section}
 \newtheorem{thm}[equation]{Theorem}
 \newtheorem{prop}[equation]{Proposition}
 \newtheorem{lemma}[equation]{Lemma}
 \newtheorem{kor}[equation]{Corollary}
 \newtheorem{cor}[equation]{Corollary}
 \theoremstyle{definition}
 \newtheorem{definition}[equation]{Definition}
 \newtheorem{ex}[equation]{Example}
 \newtheorem{conj}[equation]{Conjecture}
 \theoremstyle{remark}
 \newtheorem{remark}[equation]{Remark}
 \def\vol{{\rm vol}}
\def\Hom{{\rm Hom}}
\def\rg{{\rm rk}}
 \def\ad{{\rm ad}}
 \def\det{{\rm det}}
 \def\inv{{\rm inv}}
 \def\dom{{\rm dom}}
 \def\gen{{\rm gen}}
\def\GL{{\rm GL}}
\def\O{{\mathcal{O}}}
\def\e{{\epsilon}}
\def\Z{\mathbb{Z}}
\def\Q{\mathbb{Q}}
\def\Res{{\rm Res}}
\def\Ind{{\rm Ind}}
\begin{document}
\begin{title}
{Affine Deligne-Lusztig varieties and the action of $J$}
\end{title}
\author{Miaofen Chen and Eva Viehmann}
\thanks{The first author was partially supported by NSFC grant No. 11301185, SRFDP grant No. 20130076120002, and STCSM grant No. 13dz2260400. The second author was partially supported by ERC starting grant 277889 ``Moduli spaces of local $G$-shtukas''.}
\date{}
\begin{abstract}{We propose a new stratification of the reduced subschemes of Rapoport-Zink spaces and of affine Deligne-Lusztig varieties that highlights the relation between the geometry of these spaces and the action of the associated automorphism group. We show that this provides a joint group-theoretic interpretation of well-known stratifications which  only exist for special cases such as the Bruhat-Tits stratification of Vollaard and Wedhorn, the semi-module stratification of de Jong and Oort, and the locus where the $a$-invariant is equal to $1$.}
\end{abstract}
\maketitle
\tableofcontents

\section{Introduction}\label{secintro}

Let $k$ be a finite field with $q=p^r$ elements and let $\overline{k}$ be an algebraic closure of $k$. We consider the arithmetic case where we set $F=W(\mathbb{F}_q)[1/p]$ and the function field case where $F=k((t))$. In both cases let $L$ denote the completion of the maximal unramified extension of $F$ and let $\mathcal{O}_F$ and $\mathcal{O}_L$ be the valuation rings. We denote by $\e$ the uniformizer $t$ or $p$. We write $\sigma:x\mapsto x^q$ for the Frobenius of $\overline{k}$ over $k$ and also for the induced Frobenius of $L$ over $F$ (mapping $\e$ to $\e$).

Let $G$ be a connected reductive group over $\mathcal{O}_F$ and let $K = G(\O_L).$ Since $k$ is finite $G$ is automatically quasi-split. Let $B \subset G$ be a Borel subgroup and $T \subset B$ a maximal torus in $B$, both defined over $\O_F$. We denote by $X_*(T)$ the set of cocharacters of $T,$ defined over $\O_L.$

We fix a minuscule dominant cocharacter $\mu\in X_*(T)$ and an element $b\in G(L)$. Then the affine Deligne-Lusztig variety $X^G_{\mu}(b)=X_{\mu}(b)$ is defined as follows. Consider the following set of points
\begin{equation*}
X_{\mu}(b)(\overline{k})=\{g\in G(L)/K\mid g^{-1}b\sigma(g)\in K\e^{\mu}K\}.
\end{equation*}
Here we use $\e^{\mu}:=\mu(\e)$. In the function field case, this can be seen as the set of $\overline{k}$-valued points of a reduced closed subscheme of the affine Grassmannian. In the arithmetic case it defines in the same way a closed subspace of the Witt vector affine Grassmannian (see \cite{ZhuGr}, and \cite{BS} for the scheme structure). In many cases (i.e.~when $(G,\mu)$ corresponds to a Shimura datum of Hodge type), this set is the set of $\overline{k}$-valued points of a Rapoport-Zink moduli space of $p$-divisible groups. Furthermore, the perfection of this Rapoport-Zink space is isomorphic to the affine Deligne-Lusztig variety equipped with the structure of a (perfect) subscheme of the affine Grassmannian. From now on we will use the name ``affine Deligne-Lusztig variety'' to refer to all of these cases.

Let $$J_b(F)=\{g\in G(L)\mid g\cdot b=b\cdot \sigma(g)\}.$$ This is the set of $F$-points of an algebraic group over $F$, an inner form of some Levi subgroup of $G$ (the centralizer of the Newton point $\nu_b$ of $b$, \cite{Ko1}). There is a natural action of $J_b(F)$ on $X_{\mu}(b)$.

The geometry of affine Deligne-Lusztig varieties has been studied by many people. For example we know about the sets of connected components of closed affine Deligne-Lusztig varieties (\cite{ckv}), and for minuscule $\mu$ and $G=\GL_n$ or $GSp_{2n}$ also their sets of irreducible components (\cite{modpdiv},\cite{polpdiv}). In several particular cases we even have a complete description of their geometry, for example Kaiser \cite{Kai} for the moduli space of supersingular $p$-divisible groups of dimension 2, Vollaard-Wedhorn \cite{VW} for certain unitary groups of signature $(1,n-1)$,   further generalized by G\"ortz and He in \cite{GH}. All of these results indicate a close relation between the geometry of the affine Deligne-Lusztig varieties and (the Bruhat-Tits building of) $J_b(F)$. However, so far a conceptual way to explain this is still lacking.

In this paper we propose a new invariant on affine Deligne-Lusztig varieties which also induces a decomposition of the variety into locally closed subschemes. Inspired by the decompositions for particular cases discussed above, our invariant has the property that it not only depends on the element $g^{-1}b\sigma(g)\in K\e^\mu K$, i.e.~on the $p$-divisible group or local $G$-shtuka at the point of the moduli space we are interested in, but also on the quasi-isogeny, resp.~on the element $g$ itself.

Based on the idea that the geometry of an affine Deligne-Lusztig variety should be studied in relation with the action of $J_b(F)$, we assign to an element $g\in G(L)/K$ the function
\begin{eqnarray*}
f_g:J_b(F)&\rightarrow&X_*(T)_{\dom}\\
j&\mapsto&\inv(j,g).
\end{eqnarray*}
Here $\inv$ denotes the relative position, i.e. the uniquely defined element of $X_*(T)_{\dom}$ with $j^{-1}g\in K\e^{f_g(j)}K$ given by the Cartan decomposition $$G(L)=\coprod_{\xi\in X_*(T)_{\dom}}K\xi(\e) K.$$ Note that this definition depends on the choice of $b$ within its $\sigma$-conjugacy class. For details on this dependence and on the choice of $b$ compare the beginning of Section \ref{sec2}.

We show in Section \ref{sec2} that this defines a decomposition of the affine Deligne-Lusztig variety into locally closed pieces. However, in general the closure of a stratum is not a union of strata (compare Section \ref{secclos} for a counterexample).

It turns out that this stratification is the natural group-theoretic generalization of a number of other stratifications that were studied intensively over the past years, but only existed for special cases, and were up to now unrelated to each other. We discuss three classes of such stratifications.\\

\noindent{\it 1. The Bruhat-Tits stratification. }In \cite{VW} Vollaard and Wedhorn consider the supersingular locus of Shimura varieties for unitary groups of signature $(1,n-1)$ at an inert prime. They show that a refinement of the Ekedahl-Oort stratification yields a stratification of this locus, such that the individual strata have a description in terms of fine Deligne-Lusztig varieties, and the closure relations are given in terms of the Bruhat-Tits building of the associated goup $J_b(F)$. Since then, this result has been prominently used, in particular in relation with the Kudla programme. In \cite{GH}, G\"ortz and He generalize this result by computing a complete list of cases of affine Deligne-Lusztig varieties for which the same generalization of the Ekedahl-Oort invariant yields an analogous result. We show in Section \ref{sec42} that in the Vollaard-Wedhorn case, our invariant coincides with theirs. We conjecture that the same holds in all cases studied by G\"ortz and He and verify this conjecture in one additional case. However, we also show that our invariant is not a refinement of the Ekedahl-Oort invariant in general. Based on our  conjecture and on the applications of Vollaard-Wedhorn's theory we expect that our stratification will prove to be an interesting tool to study further cases of basic loci of Shimura varieties.\\

\noindent{\it 2. Semi-modules.} The most successful approach to prove statements about affine Deligne-Lusztig varieties in the affine Grassmannian is to use group-theoretic methods to reduce the statement to the so-called superbasic case. It was, for example, employed to prove the dimension formula and to determine the sets of connected components. Proofs for the remaining superbasic case then  usually require very explicit computations. The main tool here is the stratification by semi-modules. It was first considered by de Jong and Oort in \cite{deJongOort} for certain moduli spaces of $p$-divisible groups corresponding to the group $\GL_n$ and later extended to the superbasic case for unramified groups in \cite{dimdlv} and \cite{hamacher}. We show in Section \ref{secsemmod} that it coincides with the special case for superbasic $b$ of our stratification.\\

\noindent{\it 3. The $a$-number. }Finally, we discuss the relation to the $a$-number of $p$-divisible groups. This invariant assigns with a $p$-divisible group $X$ over $k$ the natural number $\dim\Hom_k(\alpha_p,X)$. It is a particularly useful tool to study moduli spaces of $p$-divisible groups with or without polarization, but so far does not have a good generalization for $p$-divisible groups together with endomorphisms. It was used for example in Oort's proof of the Grothendieck conjecture on the closure of Newton strata. There he first deformed any (non-ordinary) $p$-divisible group with a given Newton polygon to one with the same Newton polygon but $a$-number 1, and then used explicit methods to compute their deformations. In general our invariant (for the group $GL_n$) seems to be not related to the $a$-number of $p$-divisible groups. However, in the crucial case of the generic $a$-number 1 (and basic $b$), we can show that this locus coincides with one $J_b(F)$-orbit of strata for our invariant. We conjecture that this result still holds if one drops the assumption that $b$ is basic. In this way, our invariant defines also a group-theoretic generalization of the open stratum defined by $a=1$.
\\

Altogether these examples show that the functions $f_g$ are an invariant that seems to be central in the study of the geometry of affine Deligne-Lusztig varieties. One might hope that it opens the way to a more systematic investigation of its relation to the Bruhat-Tits building of $J_b(F)$.

As one instance of this we want to consider in forthcoming work the case of the supersingular locus of Shimura varieties for $GU(n-2,2)$ at an inert prime, a case where (at least for $n>4$) the approach of Vollaard-Wedhorn or G\"ortz and He is no longer applicable, but where this new invariant leads again to a description of the corresponding affine Deligne-Lusztig variety, and thus also of the supersingular locus itself.\\

When we finished a preliminary version of this paper, X.~Zhu explained to us that in a preprint in preparation of him and L.~Xiao, they study the geometry of some affine Deligne-Lusztig varieties by intersecting with Schubert varieties in the affine Grassmannian with different focus than ours. More precisely, they describe the irreducible components of affine Deligne-Lusztig varieties when $b$ is of the form $\epsilon^\mu$. It would be interesting to study the relation between these two papers.

\noindent{\it Acknowledgement.} We thank B. Howard for helpful discussions and S. Orlik for telling us about the theory of thin Schubert cells. 

\section{Definition and basic properties}\label{sec2}

Let $G$ be an unramified reductive group over $\O_F$, and fix a Borel subgroup $B$ and a maximal torus $T$ of $G$, both defined over $\O_F$. Let $\mu\in X_*(T)$ be a dominant cocharacter. We fix an element $b\in G(L)$ and consider the affine Deligne-Lusztig variety
$$X_{\mu}(b)=\{g\in G(L)/K\mid g^{-1}b\sigma(g)\in K\e^{\mu}K\}.$$  Multiplication with an element $h\in G(L)$ induces an isomorphism $$X_{\mu}(b)\cong X_{\mu}(h^{-1}b\sigma(h)).$$ Therefore, the geometry of the affine Deligne-Lusztig variety only depends on the $\sigma$-conjugacy class $$[b]=\{h^{-1}b\sigma(h)\mid h\in G(L)\}.$$

As before we consider the group $$J_b(F)=\{g\in G(L)\mid g^{-1}b\sigma(g)=b\}$$ and its action on $X_{\mu}(b)$. We assign to each $g\in G(L)$ the function
\begin{eqnarray*}
f_g:J_b(F)&\rightarrow&X_*(T)_{\dom}\\
j&\mapsto&\inv(j,g).
\end{eqnarray*}
As it only depends on the $K$-coset of $g$, it induces a well defined invariant on $G(L)/K$, and on each $X_{\mu}(b)$. Furthermore, each $f_g$ is constant on cosets $j(K\cap J_b(F))$.

\begin{remark} Although the varieties $X_{\mu}(b)$ and $X_{\mu}(h^{-1}b\sigma(h))$ are isomorphic for every $h\in G(L)$, the invariants $f_g$ can lead to substantially different invariants on these varieties. To see this assume that $g\in X_{\mu}(b)$ for some $g$ and $b$. Then  under the above isomorphism for $h=g$, the element $g$ is mapped to $1\in X_{\mu}(g^{-1}b\sigma(g))$. To $1$ we now assign a function $$f_1:J_{g^{-1}b\sigma(g)}(F)\rightarrow X_*(T)_{\dom}$$ with the property that $f_1(1)$ is trivial. The element $1\in G(L)/K$ is the unique element with this property. On the other hand, there is no reason why the element $g$ should be the only one having given invariant $f_g:J_{b}(F)\rightarrow X_*(T)_{\dom}.$ This shows that it is important to choose the right representative $b\in [b]$ in order to obtain reasonable invariants on $X_{\mu}(b)$. Note that on the other hand, the stratifications according to our invariants do not change if we replace $b$ by a different representative within its $K$-$\sigma$-conjugacy class.

For the general results below we do not make any restriction on the representative $b$. However, in all of the examples or comparisons to other stratifications, we need to fix a specific representative of the given class. It turns out that in all of these cases it is reasonable to choose a representative of a so-called straight element $w_b\in\widetilde W$ in $G(L)$. These elements of the extended affine Weyl group are defined by the property that $l(w\sigma(w)\dotsm\sigma^{i-1}(w))=il(w)$ for all $i>0$. They were first studied systematically by He and Nie \cite{HN}. Each $\sigma$-conjugacy class contains a representative of a straight element. However, in general there are still finitely many (not necessarily $K$-$\sigma$-conjugate) such representatives. It would be interesting to see how the induced stratifications vary if one chooses different straight representatives of the same $\sigma$-conjugacy class, or if there are examples where it is more useful to consider a totally different representative $b\in[b]$.
\end{remark}

Before we proceed let us give an example.

\begin{ex}\label{rmk_description_invariant_GLn}For $G=GL_n$, let $N=L^n$ and $\Lambda_0$ the $\mathcal{O}_L$-lattice generated by the standard basis. Then $K$ is the stabilizer of $\Lambda_0$ and we obtain a bijection between $G(L)/K$ and the set of lattices in $N$ by mapping $g$ to $g\Lambda_0$. If $g_1,g_2\in G(L)/K$, then $f_{g_1}=f_{g_2}$ if and only if
\begin{equation}\label{thisformula} \vol(g_1\Lambda_0\cap j\Lambda_0)=\vol(g_2\Lambda_0\cap j\Lambda_0) \text{ for all } j\in J_b(F).\end{equation} Here, $\vol$ denotes the volume of a lattice defined as $$\vol(\Lambda)=\rg(\Lambda_0/(\Lambda\cap\Lambda_0))-
\rg(\Lambda/(\Lambda\cap\Lambda_0)).$$ Indeed, \eqref{thisformula} follows from the fact that
$\vol(g\Lambda_0\cap j\Lambda_0)=\sum_{\mu_i>0}\mu_i+v_p(\mathrm{det} j)$ where $f_g(j)=(\mu_i)\in X_*(T)_{\dom}\subset \Z^n$, applied to all elements $p^ij$ for $i\in\Z$.
\end{ex}

We come back to the general case.

\begin{lemma}\label{lemggad}
Let $G^{\ad}$ be the adjoint group of $G$, and let $g,h\in G(L)$ with images $\bar g,\bar h\in G^{\ad}(L)$. Let $f_{\bar g}$ and $f_{\bar h}$ denote the analogous functions associated to $\bar g,\bar h$ and to the group $J_{\bar b}(F)$ associated with $(G^{\ad},\bar b)$ where $\bar b$ is the image of $b$. If $f_{\bar g}=f_{\bar h}$ and $\kappa_G(g)=\kappa_G(h)$ in $\pi_1(G)$ then $f_g=f_h$. The converse holds if the projection $J_b(F)\rightarrow J_{\bar b}(F)$ is surjective.
\end{lemma}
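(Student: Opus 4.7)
My plan is to reduce the lemma to the elementary fact that the $K$-double coset of an element of $G(L)$ is determined by the pair consisting of the $K^{\ad}$-double coset of its image in $G^{\ad}(L)$ and its Kottwitz invariant in $\pi_1(G)$. Precisely, I will first establish the key claim that the map
\[K\backslash G(L)/K \longrightarrow \bigl(K^{\ad}\backslash G^{\ad}(L)/K^{\ad}\bigr) \times \pi_1(G),\quad g\mapsto \bigl(K^{\ad}\bar g K^{\ad},\,\kappa_G(g)\bigr)\]
is injective. Via the Cartan decompositions for $G$ and $G^{\ad}$, this reduces to injectivity of $\xi\mapsto (\bar\xi,\xi\bmod Q^\vee)$ on $X_*(T)_{\dom}$, where $Q^\vee$ is the coroot lattice. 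If $\xi,\eta\in X_*(T)_{\dom}$ have the same image, then $\xi-\eta$ lies simultaneously in $\ker(X_*(T)\to X_*(T^{\ad}))=X_*(Z(G)^0)$ and in $Q^\vee$. Since characters of $Z(G)^0$ pair trivially with every coroot, these two sublattices of $X_*(T)$ span orthogonal subspaces of $X_*(T)\otimes\Q$ for any $W$-invariant inner product, so their intersection is torsion, hence zero, forcing $\xi=\eta$.

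With the key claim in hand the forward direction becomes formal. For each $j\in J_b(F)$ the equality $f_g(j)=f_h(j)$ is the statement $K j^{-1}g K=K j^{-1}h K$. By the key claim, this follows from $K^{\ad}\bar j^{-1}\bar g K^{\ad}=K^{\ad}\bar j^{-1}\bar h K^{\ad}$, which is exactly $f_{\bar g}(\bar j)=f_{\bar h}(\bar j)$ (granted by hypothesis, since $\bar j\in J_{\bar b}(F)$), together with $\kappa_G(j^{-1}g)=\kappa_G(j^{-1}h)$, which is immediate from $\kappa_G(g)=\kappa_G(h)$ since $\kappa_G$ is a group homomorphism.

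For the converse, I assume that $J_b(F)\to J_{\bar b}(F)$ is surjective and that $f_g=f_h$. Specializing to $j=1$ gives $K g K=K h K$, from which $\kappa_G(g)=\kappa_G(h)$ follows because $K\subset\ker\kappa_G$. To obtain $f_{\bar g}=f_{\bar h}$, I lift an arbitrary $\bar j\in J_{\bar b}(F)$ to some $j\in J_b(F)$ using the surjectivity hypothesis; then the equality $f_g(j)=f_h(j)$, i.e.\ $K j^{-1}g K=K j^{-1}h K$, projects to $K^{\ad}\bar j^{-1}\bar g K^{\ad}=K^{\ad}\bar j^{-1}\bar h K^{\ad}$, that is, $f_{\bar g}(\bar j)=f_{\bar h}(\bar j)$.

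The only substantive step is the orthogonality argument underlying the key claim; everything else is formal manipulation of double cosets and the additivity of $\kappa_G$. The one minor technicality I would want to double-check is the identification $\ker(X_*(T)\to X_*(T^{\ad}))=X_*(Z(G)^0)$, which follows from left exactness of $\Hom(\mathbb{G}_m,-)$ applied to $1\to Z(G)\to T\to T^{\ad}\to 1$ together with the fact that $\mathbb{G}_m$ sees only the identity component of $Z(G)$.
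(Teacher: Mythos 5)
Your proposal is correct and takes essentially the same approach as the paper: the converse is the same formal lifting argument, and your key claim --- injectivity of $\xi\mapsto(\bar\xi,\ \xi \bmod Q^\vee)$ on dominant cocharacters, via $X_*(Z(G)^0)\cap Q^\vee=0$ --- is a reformulation of the paper's observation that $K$-double cosets with the same image in $G^{\ad}$ differ by a central scalar, which is detected by the injective map $Z(L)/(Z(L)\cap K)\rightarrow\pi_1(G)$. The only difference is cosmetic: you work at the level of the cocharacter lattice through the Cartan decomposition and actually verify the orthogonality fact that the paper only asserts.
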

\begin{proof}
 The value $\kappa_G(g)$ is determined by $KgK$, and hence by $f_g(1)$. For each $j\in J_b(F)$ the class $Kj^{-1}gK$ is mapped to the class of $\bar j^{-1}\bar g$ in $G^{\ad}$. This implies the last assertion for groups where $J_b(F)\rightarrow J_{\bar b}(F)$ is surjective.

For the other direction, the above argument implies that $Kj^{-1}gK$ and $Kj^{-1}hK$ coincide up to a scalar $\lambda_j$ in the center $Z$ of $G$.  The restriction of the projection map $Z(L)/(Z(L)\cap K)\rightarrow \pi_1(G)=X_*(T)/(\text{ coroot lattice })$ is injective and coincides with the map induced by $\kappa_G$. Thus $\lambda_j$ is trivial if and only if $\kappa_G(j^{-1}g)=\kappa_G (j^{-1}h)$. As $\kappa_G$ is a group homomorphism, this is equivalent to $\kappa_G(g)=\kappa_G(h)$.
\end{proof}

\begin{remark}\label{remggad}
For a datum $(G,b,\mu)$ where $J_b(F)\rightarrow J_{\bar b}(F)$ is not  surjective (e.g. $G=\mathrm{SL}_n$ with $b$ superbasic), it may be useful to consider the finer invariant on $X_{\mu}^G(b)$ induced by the functions $f_{\bar g}$ on $X_{\mu}^{G^{\ad}}(b)$, together with the value $\kappa_G(g)$. One easily sees that this invariant has analogous properties (for example concerning finiteness as studied below). In the next section we will also use this finer invariant.
\end{remark}

Our next aim is to provide finiteness results on the set of possible functions $f_g$ and on the induced decomposition of the affine Grassmannian. They are based on the following proposition.
\begin{prop}\label{propfin1}
Let $\mu_0\in X_*(T)_{\dom}$. Then the set of functions $f_g$ (for $g\in G(L)$) with $f_g(1)\preceq \mu_0$ (i.e. with $g\in \overline{K\mu_0K}$) is finite.
\end{prop}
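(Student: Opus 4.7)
The plan is to reduce the finiteness assertion to the case of a single Cartan double coset and then apply a stratification argument based on the Noetherian structure of the ambient scheme. Since $\overline{K\e^{\mu_0}K}=\bigsqcup_{\nu\preceq\mu_0}K\e^{\nu}K$ is a finite disjoint union of Cartan double cosets and $f_g(1)$ is precisely the dominant $\nu\in X_*(T)_{\dom}$ with $g\in K\e^{\nu}K$, it suffices to prove finiteness of $\{f_g\mid g\in K\e^{\nu}K\}$ for each fixed $\nu\preceq\mu_0$.

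Next, for every $j\in J_b(F)$ I would show that the map $gK\mapsto f_g(j)=\inv(j^{-1}g)$ has finite image on $K\e^{\nu}K/K$ with locally closed level sets. Letting $\eta=\inv(j^{-1})$, we have $j^{-1}\cdot K\e^{\nu}K\subset K\e^{\eta}K\cdot K\e^{\nu}K$, which is a finite union of Cartan double cosets by the standard Hecke-algebra finiteness; each level set $\{gK:f_g(j)=\lambda\}=(j\cdot K\e^{\lambda}K/K)\cap K\e^{\nu}K/K$ is the intersection of a translate of a Cartan double coset with $K\e^{\nu}K/K$, hence locally closed. For any finite $S\subset J_b(F)$ the joint stratification of $K\e^{\nu}K/K$ by $(f_\bullet(j))_{j\in S}$ therefore consists of finitely many locally closed strata, and these partitions refine one another as $S$ grows.

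The crucial step---and the main obstacle---is to show that this refinement stabilizes at some finite $S_0\subset J_b(F)$: once this is established, $f_g$ is determined by the tuple $(f_g(j))_{j\in S_0}$, which takes values in a finite set, and finiteness follows. I would argue stabilization from the Noetherianity of $K\e^{\nu}K$ as a quasi-compact, finite-dimensional scheme (of its perfection in the arithmetic case): an infinite strictly refining chain of constructible stratifications would force a non-terminating descending chain of closed subsets, obtained from the closures of the strata that split non-trivially at each step, contradicting the descending chain condition. An alternative, more explicit route---suggested by the introduction's mention of thin Schubert cells---would be to parametrize the strata directly by a finite combinatorial index set arising from the Bruhat--Tits structure of $G$, sidestepping the abstract stabilization argument entirely.
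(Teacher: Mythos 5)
The reduction to a single Cartan double coset and the observation that each individual $f_\bullet(j)$ has finite image with locally closed level sets are fine and match the opening remark of the paper's proof. The gap is in your ``crucial step''. Noetherianity does not give the stabilization you want: a strictly refining chain of stratifications of a Noetherian scheme into locally closed pieces need not terminate. For instance on $\mathbb{A}^1$ one may successively split off closed points $x_1, x_2,\dotsc$; every refinement is strict, every stratum is locally closed, and the process never stabilizes. The closed sets you propose to use, namely the closures of the strata that split non-trivially, do not form a strictly descending chain (in the example the splitting stratum is $\mathbb{A}^1\setminus\{x_1,\dotsc,x_n\}$, whose closure is all of $\mathbb{A}^1$ at every step), so no contradiction with the descending chain condition arises. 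The ``alternative route'' via a finite combinatorial index set from Bruhat--Tits theory is only a hope, not an argument.

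There is also a structural reason why your argument cannot be repaired without new input: it uses no property of $J_b(F)$ beyond its being a subset of $G(L)$. If it were valid, it would apply verbatim with $J_b(F)$ replaced by $G(L)$, and there the statement is false: for $g,g'$ in a fixed Schubert variety lying in different $K$-cosets one has $\inv(g,g)=0\neq\inv(g,g')$, so the functions $h\mapsto\inv(j,h)$, $j\in G(L)$, separate the (infinitely many) points of $K\e^{\nu}K/K$. The finiteness in the proposition is an arithmetic phenomenon, and this is exactly what the paper's proof exploits: after replacing $b$ by a $\sigma$-conjugate defined over a finite unramified extension $F'$ of $F$, the elements $j\in J_b(F)$ act through matrices defined over (an enlargement of) $F'$; translating $f_g(j)$ into lattice conditions in Weyl modules $M_{\lambda}$ (for a finite generating set of dominant weights $\lambda$), the relevant lattices $\e^{m_{\lambda}}M_{\lambda}\cap(\e^{m}\rho_{\lambda}(j)M_{\lambda}+\e^{m'_{\lambda}}M_{\lambda})$ are sandwiched between two fixed lattices and are defined over $F'$, hence form a \emph{finite} set because the residue field of $F'$ is finite. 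This is what produces the finite test set $I_{\lambda}\subset J_b(F)$ whose values determine $f_g$, and it is the ingredient your stabilization argument is missing.
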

Here, $\preceq$ denotes the partial ordering on $X_*(T)_{\dom}$ such that $\mu'\preceq\mu$ if and only if $\mu-\mu'$ is a positive integral linear combination of positive coroots.

Notice that the role of $\mu_0$ in the proposition is a completely different one as the role of $\mu$ in the definition of $X_{\mu}(b)$. Here we bound $g$, whereas before, one bounds $g^{-1}b\sigma(g)$.
\begin{proof}
We first remark that for any fixed $j$, the values $f_{g}(j)$ for $g$ as above lie in a finite subset of $X_*(T)_{\dom}$. Indeed, we have $g\in K\mu'K$ for some $\mu'\preceq\mu_0$. Thus if $j^{-1}\in K\mu_1K$ for some dominant cocharacter $\mu_1$, then $j^{-1}g\in K\mu_1K\mu'K\subseteq \overline{K(\mu_1+\mu')K}$ which implies our remark.

We may replace $b$ by a representative of $[b]$ that is defined over a finite unramified extension of $F$. Indeed, we replace $b$ by any element $b'=bk_0$ with $k_0\in K$ and $k_0\equiv 1\pmod {\e^m}$. If $m$ is sufficiently large, $b'$ is $\sigma$-conjugate to $b$ via an element $k_1\equiv 1\pmod {\e^{m'}}$ for some $m'$. Also, $m'$ can be assumed to be large by taking $m$ sufficiently large. Then conjugation by $k_1$ induces the isomorphism $J_b(F)\rightarrow J_{b'}(F)$ and multiplication by $k_1^{-1}$ the isomorphism $X_{\mu}(b)\rightarrow X_{\mu}(b')$. In particular for $j\in J_b(F)$ and all $g$ we have $Kj^{-1}gK=Kk_1(k_1^{-1}jk_1)(k_1^{-1}g)K$, that is $f_g^b(j)=f_{k_1^{-1}g}^{b'}(k_1^{-1}jk_1)$. Here we denote by $f_g^b$ resp. $f_g^{b'}$ the functions defined on $J_b(F)$ resp. on $J_{b'}(F)$. Furthermore, if $m'$ is large (depending on $\mu_0$), $k_1^{-1}gK=gK$ for all $g\in \overline{K\mu_0K}$. Therefore the proposition holds for the functions $f_g^b$ if and only if it holds for the functions $f_g^{b'}$. We can now choose $k_0$ in such a way that $b'=bk_0$ is defined over a finite unramified extension of $F$. We replace $b$ by $b'$ and from now on assume that the same holds for $b$.

We want to show that there are finitely many elements $j\in J_b(F)$ such that the $f_g(j)$ for these elements determine all other values of $f_g$, for each $g$ as above. This together with the remark we just proved then implies the proposition.

By \cite{HV1}, Lemma 3.11, we have the following alternative way to compute $f_g(j)$. Let $B$ be the chosen Borel subgroup and let $\overline{B}$ be its opposite. As $G$ is unramified over $F$, it is split over some finite unramified extension $F'$ of $F$. Therefore for any dominant weight $\lambda$ of $G$, we can define the Weyl module $V(\lambda)$ of $G_{/F'}$ with highest weight $\lambda$. Indeed, $V(\lambda)=\bigl(\Ind_{\overline{B}}^G(-w_0\lambda)\bigr)^{\vee}$ with $w_0$  the longest element in the (finite) Weyl group of $G$. It is an $F'$-representation of $G$ generated by a $B$-stable line on which $B$ acts through $\lambda$. We denote the associated representation by $\rho_{\lambda}$. Since $G$, $B$, $\overline{B}$ and $\lambda$ can all be defined over $\O_L$, the $L$-representation $(V(\lambda)\otimes L,\rho_{\lambda})$ comes in fact from a $\O_L$-representation $(M_{\lambda},\rho_{\lambda})$ with $V(\lambda)\otimes_{F'}L=M_{\lambda}\otimes_{\O_L} L$. Then $f_g(j)\preceq\mu_1$ if and only if
$$\rho_{\lambda}(j^{-1}g)(M_{\lambda})\subseteq \e^{\langle w_0\lambda,\mu_1\rangle}\cdot M_{\lambda}\text{ for all }\lambda\text{ and}$$
 $$\mu_1=\kappa_G(j^{-1}g)=\kappa_G(g)-\kappa_G(j)=\mu_0-\kappa_G(j)\text{ as elements of }\pi_1(G)$$
where $\kappa_G$ denotes the Kottwitz homomorphism.  The precise value of $f_g(j)$ can then be computed as the minimum (wrt $\preceq$) of all $\mu_1$ such that the above conditions hold. By \cite{HV1}, Lemma 3.7, the first condition holds if and only if it holds for a finite generating set of the monoid of dominant weights. The second condition only depends on $\mu_0$ and not on $g$, and can therefore be neglected for the rest of the proof.

For a dominant weight $\lambda$ let $m_{\lambda,g}:J_b(F)\rightarrow \mathbb{Z}$ where $m_{\lambda,g}(j)$ is the maximum of all integers $m$ such that
\begin{equation}\label{eqdefm}
\rho_{\lambda}(j^{-1}g)(M_{\lambda})\subseteq \e^{m}\cdot M_{\lambda}.
\end{equation}
By what we just saw, there is a finite set of weights $\lambda$ such that the functions $m_{\lambda,g}$ together determine $f_g$. Also, for fixed $j$ and $\lambda$, and for varying $g$ as above, there are only finitely many values of $m_{\lambda,g}(j)$.

Let $m_{\lambda}, m'_{\lambda}\in \mathbb{Z}$ with
\begin{equation}
\label{eqm1}\e^{m'_{\lambda}}( M_{\lambda})\subseteq\rho_{\lambda}(g)(M_{\lambda})\subseteq  \e^{m_{\lambda}}( M_{\lambda})
\end{equation}
for all $g$ as above. (For example, we can choose $m_{\lambda}=\langle \mu_0, w_0\lambda\rangle$ and $m'_{\lambda}=\langle\mu_0, \lambda\rangle$ since we have $w_0\lambda\preceq\lambda'\preceq\lambda$ for any weight $\lambda'$ of $V(\lambda)$,which implies
\[\langle \mu_0, w_0\lambda\rangle\leq \langle f_g(1), w_0\lambda\rangle\leq \langle f_g(1), \lambda'\rangle\leq\langle f_g(1),\lambda\rangle\leq \langle \mu_0, \lambda\rangle.)\] Then it is enough to show

{\it Claim 1.} There is a finite subset $I=I_{\lambda}$ in $J_b(F)$ with the following property. Assume that $g,g'\in G(L)$ satisfy \eqref{eqm1} and $m_{\lambda,g}(j)=m_{\lambda,g'}(j)$ for all $j\in I$. Then $m_{\lambda,g}=m_{\lambda,g'}$.

To prove this claim we first show

{\it Claim 2.} There are only finitely many lattices of the form $$\e^{m_{\lambda}}(M_{\lambda})\cap(\e^m\rho_{\lambda}(j)(M_{\lambda})+\e^{m'_{\lambda}}(M_{\lambda}))$$ with $j\in J_b(F)$.

We show claim 2. All of these lattices contain the lattice $\e^{m'_{\lambda}}(M_{\lambda})$ and are contained in $\e^{m_{\lambda}}(M_{\lambda}).$ Furthermore, after replacing $F'$ by some finite unramified extension, we may assume that each $\rho_{\lambda}(j)$ is defined over $F'$ (as $j\in J_b(F)$, and $b$ is assumed to be defined over some finite unramified extension of $F$),  and thus the same holds for the lattices we want to consider. In other words, each such lattice $\Lambda$ satisfies $\Lambda=(\Lambda\cap V(\lambda))\otimes_{F'}L$. But $(V(\lambda)\cap \e^{m_{\lambda}}(M_{\lambda}))/\e^{m'_{\lambda}}(M_{\lambda})$ is a finite set, which implies claim 2.

To prove claim 1 we remark that \eqref{eqdefm} is equivalent to
$\rho_{\lambda}(g)(M_{\lambda})\subseteq \e^{m}\cdot\rho_{\lambda}(j) M_{\lambda}$ or (by \eqref{eqm1}) to the two conditions
\begin{eqnarray}
\nonumber \e^{m'_{\lambda}}( M_{\lambda})&\subseteq&\e^{m}\cdot\rho_{\lambda}(j) M_{\lambda}\\
\label{eqfin1}\rho_{\lambda}(g)(M_{\lambda})&\subseteq& (\e^{m}\cdot\rho_{\lambda}(j) M_{\lambda}+ \e^{m'_{\lambda}}( M_{\lambda}))\cap \e^{m_{\lambda}}( M_{\lambda}).
\end{eqnarray}
The first of these conditions is independent of $g$. Choose $j_1,\dotsc, j_l\in J_b(F)$ and $m_1,\dotsc,m_l$ such that the $(\e^{m_i}\cdot\rho_{\lambda}(j_i) M_{\lambda}+ \e^{m'_{\lambda}}( M_{\lambda}))\cap \e^{m_{\lambda}}( M_{\lambda})$ are the finitely many lattices of claim 2. Let $I=\{j_1,\dotsc, j_l\}$ and assume that $m_{\lambda, g}(j)=m_{\lambda,g'}(j)$ for all $j\in I$. Consider now an arbitrary $j\in J_b(F),$ and $m\in \mathbb Z$. Let $j_i\in I$ with $$(\e^{m}\rho_{\lambda}(j) M_{\lambda}+ \e^{m'_{\lambda}}( M_{\lambda}))\cap \e^{m_{\lambda}}( M_{\lambda})=(\e^{m_i}\rho_{\lambda}(j_i) M_{\lambda}+ \e^{m'_{\lambda}}( M_{\lambda}))\cap \e^{m_{\lambda}}( M_{\lambda}).$$ Then \eqref{eqfin1} holds for $j,m,$ and $g$ (resp.~$g'$) if and only if it holds for $j_i,m_i,$ and $g$ (resp. $g'$). As $m_{\lambda, g}(j_i)=m_{\lambda,g'}(j_i)$, these conditions for $g$ and $g'$ are equivalent.
\end{proof}
We now apply this to affine Deligne-Lusztig varieties.
\begin{cor}
Let $\mu\in X_*(T)_{\dom}$ and let $b\in B(G,\mu)$. Then the action of $J_b(F)$ on $\{f_g\mid g\in X_{\mu}(b)\}$ has finitely many orbits.
\end{cor}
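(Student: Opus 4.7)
The plan is to reduce the finiteness of orbits to Proposition~\ref{propfin1} via two observations, the second of which is the main substantive input.

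First, I would record how $J_b(F)$ acts on the set $\{f_g\}$. For $j_0 \in J_b(F)$ and $g \in G(L)$,
\[
f_{j_0 g}(j) \;=\; \inv(j, j_0 g) \;=\; \inv(j_0^{-1}j, g) \;=\; f_g(j_0^{-1} j),
\]
so $J_b(F)$ acts on $\{f_g\}$ by left-translation of the argument. In particular, as $j_0$ ranges over $J_b(F)$ the values $f_{j_0 g}(1) = \inv(1, j_0 g)$ run through the image of $f_g$ on $J_b(F)$. Hence the $J_b(F)$-orbit of $f_g$ contains a function $f_h$ with $f_h(1) \preceq \mu_0$ if and only if $g$ admits a $J_b(F)$-translate lying in $\overline{K\e^{\mu_0}K}$.

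Next, Proposition~\ref{propfin1} asserts that the collection of functions $f_h$ with $f_h(1)\preceq\mu_0$ is finite for every fixed $\mu_0$. Since $J_b(F)$ preserves $X_\mu(b)$, it therefore suffices to produce a single dominant cocharacter $\mu_0 = \mu_0(G,b,\mu)$ such that
\[
X_\mu(b) \;\subseteq\; J_b(F)\cdot \overline{K\e^{\mu_0}K}.
\]
Granting this, every orbit on $\{f_g : g\in X_\mu(b)\}$ has a representative $f_h$ with $h \in X_\mu(b)\cap \overline{K\e^{\mu_0}K}$, and Proposition~\ref{propfin1} bounds the number of such representatives.

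Establishing the inclusion above is the crux of the argument, and I expect it to be the principal obstacle. In the basic case it essentially says that $J_b(F)\backslash X_\mu(b)$ has image of finite type in the affine Grassmannian, which follows from the basic uniformization of the corresponding Rapoport--Zink space. For general $b$ one reduces to the basic case by means of the Newton/Hodge--Newton-type, $J_b(F)$-equivariant product-structure decomposition of $X_\mu(b)$, which expresses it as a union of pieces controlled by basic affine Deligne--Lusztig varieties for Levi subgroups of $G$ attached to the Newton point $\nu_b$. Choosing $\mu_0$ large enough to dominate the Cartan invariants of the finitely many quasi-compact pieces that arise then yields the desired bound.
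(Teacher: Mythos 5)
Your reduction is exactly the one the paper uses: from $f_{j_0g}(j)=f_g(j_0^{-1}j)$ you conclude that every $J_b(F)$-orbit of functions contains some $f_h$ with $h\in X_\mu(b)\cap\overline{K\e^{\mu_0}K}$, provided one knows the inclusion $X_\mu(b)\subseteq J_b(F)\cdot\overline{K\e^{\mu_0}K}$ for a single $\mu_0$ depending only on $\mu$ and $b$, and then Proposition \ref{propfin1} bounds the number of such $f_h$. That key inclusion is precisely what the paper invokes: it is the finiteness theorem of Rapoport and Zink \cite{RZIndag}, proved group-theoretically for arbitrary $(G,b)$ via Landvogt's embedding of the Bruhat--Tits building of $J_b$ into that of $G$; the paper's proof consists of this citation plus your reduction.

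The gap in your proposal is the justification you offer for that inclusion. The uniformization argument presupposes that $(G,\mu)$ arises from a Shimura datum (of Hodge type) and that $b$ is basic, whereas the corollary is stated for an arbitrary unramified $G$, arbitrary dominant $\mu$ and arbitrary $b\in B(G,\mu)$, and it is also asserted in the function field case, where there is no Shimura variety to uniformize; moreover, passing from ``$J_b(F)\backslash X_\mu(b)$ has finite-type image'' to containment in finitely many $J_b(F)$-translates of a fixed Schubert variety, and the $J_b(F)$-equivariant Hodge--Newton/Levi reduction for non-basic $b$ (with control of the Cartan invariant under the projection to the Levi), are asserted rather than carried out -- making them precise amounts to re-proving \cite{RZIndag}. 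So the argument becomes complete, and coincides with the paper's, once you replace the sketch of the crux by the citation of the Rapoport--Zink finiteness theorem; as written, that step remains unproved.
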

\begin{proof}
Indeed, by \cite{RZIndag}, for each $g\in X_{\mu}(b)$ there is a $j\in J_b(F)$ such that $jg\in \overline {K\mu_0K}$ for some $\mu_0$ only depending on $\mu$ and $b$. Thus every $J_b(F)$-orbit contains a function $f_g$ with $f_g(1)\preceq\mu_0$, and the assertion follows from the proposition above.
\end{proof}

\begin{prop}\label{propfin}
Let $g\in G(L)$ and let $f=f_g$ be the associated function.
\begin{enumerate}
\item  There are finitely many elements $j_1,\dotsc,j_d\in J_b(F)$ such that for all $h\in G(L)$ with $f_h(j_i)=f(j_i)$ for all $i$ we have $f=f_h$.
\item  There are finitely many elements $j_1,\dotsc,j_{d'}\in J_b(F)$ such that for all $h\in G(L)$ with $f_h(j_i)\preceq f_g(j_i)$ for all $i$ we have $f_h(j)\preceq f_g(j)$ for all $j\in J_b(F)$.
\item There are finitely many elements $j_1,\dotsc,j_{d''}\in J_b(F)$ such that for all $h,h'\in G(L)$ with $f_h(j_i)=f_{h'}(j_i)\preceq f_g(j_i)$ for all $i$ we have $f_h(j)=f_{h'}(j)\preceq f_g(j)$ for all $j\in J_b(F)$.
\item The set $S_{f}=\{h\in G(L)/K\mid f_h=f\}$ defines a locally closed subspace of the affine Grassmannian.
\item The set $S_{\preceq f}=\{h\in G(L)/K\mid f_h(j)\preceq f(j)\text{ for all }j\in J_b(F)\}$ is closed and a finite union of sets $S_{f'}$.
\end{enumerate}
\end{prop}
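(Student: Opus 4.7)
Proposition \ref{propfin1} already does the essential work: setting $\mu_0:=f_g(1)$, the set $\mathcal{F}$ of functions $f_h$ with $f_h(1)\preceq\mu_0$ is finite, say $\mathcal{F}=\{f^{(1)},\dotsc,f^{(N)}\}$. All five parts follow by combining this finiteness with elementary separation arguments, together with the fact that the Schubert cells $K\e^{\lambda}K/K$ are locally closed in the affine Grassmannian (in the arithmetic case via the structure results of \cite{ZhuGr} and \cite{BS}).

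For (1), I take $j_1:=1$; the hypothesis $f_h(1)=f(1)=\mu_0$ already puts $f_h\in\mathcal{F}$, and then for each $f^{(a)}\in\mathcal{F}$ with $f^{(a)}\neq f$ I pick a separating point $j_a\in J_b(F)$ with $f^{(a)}(j_a)\neq f(j_a)$. The finite family $\{1,j_a\}$ is the required witness set. For (2), the same $j_1=1$ forces $f_h\in\mathcal{F}$; for each $f^{(a)}\in\mathcal{F}$ that is not pointwise $\preceq f$, adjoin a witness $j_a$ with $f^{(a)}(j_a)\not\preceq f(j_a)$. Part (3) is the combination: apply the witnesses of (2) to $f_g$ to force $f_h,f_{h'}\in\mathcal{F}_0:=\{f'\in\mathcal{F}\mid f'\preceq f_g\}$, then for each pair $f^{(a)}\neq f^{(b)}$ in $\mathcal{F}_0$ adjoin a point where they differ.

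For (4), the equality $f_h(j)=\lambda$ translates to $h\in jK\e^{\lambda}K/K$, the translate by $j$ of a Schubert cell, which is locally closed since left multiplication by $j$ is an automorphism of the affine Grassmannian. Using the $j_i$ of (1),
\[ S_f\;=\;\bigcap_{i=1}^{d} j_iK\e^{f(j_i)}K/K \]
is a finite intersection of locally closed subsets, hence locally closed. For (5), the condition $f_h(j)\preceq\lambda$ corresponds to $h\in j\overline{K\e^{\lambda}K}/K$, a translated Schubert variety, which is closed; using the $j_i$ of (2),
\[ S_{\preceq f}\;=\;\bigcap_{i=1}^{d'} j_i\overline{K\e^{f(j_i)}K}/K \]
is closed. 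Since any $h\in S_{\preceq f}$ has $f_h\in\mathcal{F}_0$, we obtain the decomposition $S_{\preceq f}=\bigcup_{f'\in\mathcal{F}_0}S_{f'}$ as a finite union of strata. The argument is essentially formal once Proposition \ref{propfin1} is available; the only bookkeeping point is to include $j_1=1$ in each witness set so that the reduction to $\mathcal{F}$ applies uniformly, and there is no genuine further obstacle.
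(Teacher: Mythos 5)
Your argument is correct and follows essentially the same route as the paper: apply Proposition \ref{propfin1} with $\mu_0=f_g(1)$ and $j_1=1$ to reduce to finitely many possible functions, adjoin finitely many separating (resp.\ dominance-witness) points in $J_b(F)$, and then write $S_f$ and $S_{\preceq f}$ as finite intersections of $J_b(F)$-translates of Schubert cells resp.\ Schubert varieties. The only cosmetic difference is that you prove (1)--(3) directly in turn (being slightly more explicit about the $\preceq$-witnesses), whereas the paper proves (3) first and deduces (1) and (2) from it.
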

\begin{proof}
Clearly, the third assertion implies the first and the second. Let us now prove the third assertion. Let $j_1=1$ and $\mu_0=f_g(1)$. Then by Proposition \ref{propfin1} there are only finitely many functions $f_h$ with $f_h(1)\preceq f_g(1)$. Thus we can choose $j_2,\dotsc,j_{d''}\in J_b(F)$ such that the values at the $j_i$ distinguish these finitely many functions.

To prove the last two assertions we use the well-known results on Schubert cells in affine Grassmannians (which also hold in the arithmetic case by \cite{ZhuGr}, 1.5). We obtain that for some fixed $j\in J_b(F)$ and $\mu\in X_*(T)_{\dom}$, the set of all $g\in G(L)$ with $f_g(j)\preceq \mu$ is closed, and the subset where $f_g(j)= \mu$ is open and dense in it. Thus the first assertion implies the fourth, and the third assertion implies the fifth.
\end{proof}

\subsection{Closures of strata}\label{secclos}

We give an explicit example which is interesting in itself, but also showing that the decomposition of $X_{\mu}(b)$ is in general not a stratification in the proper sense, i.e.~that the closure of some $S_f$ is in general not a union of other strata.

For this example let $G=GL_{2m}$ for some $m$, let $B$ be the Borel subgroup of upper triangular matrices and let $T$ be the diagonal torus. Let $N=L^{2m}$ with a basis $e_{ij}$ ($1\leq i\leq m$, $j=1,2$) and we define $e_{ij}$ for $j\in\Z$ by the rule $e_{i,j+2}=\e e_{ij}$. Let $b\in G(L)$ with $b(e_{ij})=e_{i,j+1}$. Then $b$ is basic of slope $1/2$. Furthermore, $J_b(F)=GL_m(D)$ where $D$ is the quaternion division algebra over $F$. In particular, we have $b\in J_b(F)$.

Let $\Lambda_0=\O_L^{2m}$ be the standard lattice, generated by our basis $e_{ij}$ ($1\leq i\leq m$, $j=1,2$). Then elements of $G(L)/K$   are in bijection with lattices $\Lambda\subset N$, by mapping $g$ to $g\Lambda_0$. Let $\mu\in X_*(T)_{\dom}$ be minuscule with $[b]\in B(G,\mu)$, i.e.~$\mu=(1^{(m)},0^{(m)})$. Then under the above bijection points of $X_{\mu}(b)$ correspond to lattices satisfying $\Lambda\supseteq b\sigma(\Lambda)\supseteq \e\Lambda$.

 We consider the subscheme of the affine Grassmannian consisting of all lattices $\Lambda$ with $$b\Lambda_0\subseteq\Lambda\subseteq \Lambda_0,$$ and such that $\Lambda$ has some fixed volume $m-d$. One easily verifies that this is a (closed) subscheme of $X_{\mu}(b)$. Furthermore, as $1,b\in J_b(F)$, it is a union of (finitely many) strata $S_f$. A lattice $\Lambda$ in this subset is determined by its image $\bar\Lambda\subseteq\Lambda_0/b\Lambda_0\cong k^m$ which is a sub-vector space of dimension $d$. Therefore this closed subscheme of the affine Grassmannian is isomorphic to the (classical) Grassmannian $Gr_d({\overline k}^m)$. A lattice $\Lambda$ is of the form $j\Lambda_0$ for some $j\in J_b(F)$ if and only if $\bar\Lambda$ is defined over $\mathbb{F}_{q^2}$. 
 
{\it Claim.} Fixing the function $f_g$ (with $\Lambda=g\Lambda_0$) corresponds to fixing the dimensions of all intersections of $\bar\Lambda$ with sub-vector spaces defined over $\mathbb{F}_{q^2}$. 

We proceed as in the proof of Proposition \ref{propfin1}: Fixing the relative position of $g$ and some fixed $j\in J_b(F)$ is equivalent to fixing the dimensions of all intersections $\Lambda\cap \epsilon^i j\Lambda_0$ for all $i\in \mathbb{Z}$. Furthermore as $b\Lambda_0\subseteq \Lambda\subseteq \Lambda_0$, such a dimension of an intersection is determined by the dimension of the intersection of $\bar\Lambda$ with $((\epsilon^i j\Lambda_0+b\Lambda_0)\cap \Lambda_0)/b\Lambda_0$. This is a sub-vector space of $\Lambda_0/b\Lambda_0$ defined over $\mathbb{F}_{q^2}$. The claim follows. 
 
The induced decomposition of $Gr_d({\overline k}^m)$ into locally closed subschemes refines the decomposition into thin Schubert cells of Gelfand, Goresky, MacPherson, and Serganova \cite{GGMS}. There, one fixes the dimensions of the intersections with all sub-vector spaces generated by some of the coordinate vectors.

 For $d\leq m$, every $d\times m$ matrix $A$ of rank $d$ with coefficients in $\bar k$ determines an element $W_A\in Gr_d(\bar k^m)$ such that $W_A$ is generated by the $d$ rows of $A$. Let $I_d$ be the set of subsets of $d$ elements in $\{1,2,\dotsc, m\}$. For any $J\in I_d$, let $A_J$ be the $d\times d$ matrix whose columns are the columns of $A$ at indices from $J$. Let $L_A=\{J\in I_d| \det A_J\neq 0\}$. In fact, $L_A$ only depends on $W_A$, and we can also associate with every element $W$ of $Gr_d(\bar k^m)$ a corresponding set $L_{W}$.  It is called the list of $W$. The locally closed subschemes of $ Gr_d(\bar k^m)$ given by fixing $L_W$ are called thin Schubert cells, cf. \cite{GS}. For $W, W'\in Gr_d(\bar k^m)$, if $W'$ is in the closure of the thin Schubert cell of $W$, then $L_{W'}\subseteq L_W$.

The restriction of the action of $GL_m(\bar k)$ induces an action of $GL_m(k)$ on $Gr_d(\bar k^m)$. Our decomposition is the intersection of the decomposition of thin Schubert cells under the action of $GL_m(k)$. For any $W\in Gr_d(\bar k^m)$, our invariant for $W$ is determined by $(L_{\varphi W})_{\varphi\in GL_m(k)}$. And for $W, W'\in Gr_d(\bar k^m)$, if $W'$ is in the closure of the stratum of $W$, then $\varphi(W')$ is in the closure of the thin Schubert cell of $\varphi(W)$ for all $\varphi\in GL_m(k)$.

In \cite{GS} 1.11.1, Gelfand and Serganova give an example for $m=7$ showing that in general, the closure of thin Schubert cells in the Grassmannian $Gr_3( \mathbb{C}^7)$ of $3$-dimensional sub-vector spaces of $\mathbb{C}^7$ is not a union of thin Schubert cells. In the rest of this section, we will give an analogous example in our case for $m=7$, which shows that our decomposition is not a stratification. So from now on assume $m=7$ and $d=3$.

For $\lambda\in\bar k$, let $$A_{\lambda}=\bigg (\begin{array}{ccccccc} 1 &0  &0 &x_1\lambda &0 &x_1x_5 &x_1x_6\\ 0 &1 &0 &x_2\lambda &x_2x_4 &0 &x_2x_6\\ 0 &0 &1 &x_3\lambda &x_3x_4 &x_3x_5 &0\end{array}\bigg )$$
with $x_1,\dotsc, x_6\in\bar k$ such that we have for the degrees of the fields extensions \begin{eqnarray}\label{eqn_generic_x_i}[k(x_1,\dotsc, x_i): k(x_1,\dotsc, x_{i-1})]>>0 \text{ for } i=1,\dotsc, 6.\end{eqnarray} When $\lambda\neq 0$,  $L_{A_\lambda}\subset I_3$ is a subset of 29 elements. Indeed, if we consider the $i$-th column of $A_{\lambda}$ as the homogeneous coordinates of a point $v_i \in\mathbb{P}^2(\bar k)$, then $J=\{i_1, i_2, i_3\}\in L_A$ if and only if $v_{i_1}, v_{i_2}, v_{i_3}$ are not on a line in $\mathbb{P}^2(\bar k)$. The positions of the $(v_i)_{1\leq i\leq 7}$ in $\mathbb{P}^2(\bar k)$ are as follows.

{\setlength{\unitlength}{10 pt}
\begin{figure}[h]
\begin{center}
\begin{picture}(9,7)(1,0)

\put(1,1){\line(1,0){6}}
\put(4,1){\line(0,1){6}}
\put(1,1){\line(1,2){3}}
\put(4,7){\line(1,-2){3}}
\put(1,1){\line(3,2){4.5}}
\put(7,1){\line(-3,2){4.5}}

\put(0.4,0.5){$1$}
\put(4,0.1){$7$}
\put(7.4,0.5){$2$}
\put(3.4,3.5){$4$}
\put(1.9,4.2){$6$}
\put(5.8,4.2){$5$}
\put(4,7.3){$3$}
\end{picture}
\end{center}
\end{figure}

Let $W_{\lambda}=W_{A_{\lambda}}\in Gr_3(\bar k^7)$. We will show that the closure of the stratum of $W_1$ is not a union of strata. For any $\varphi\in GL_7(k)$ and $J\in I_3$, by the Cauchy-Binet formula
\[\det (A_{\lambda}\cdot\varphi)_J=\sum_{J'\in I_3} \det (A_\lambda)_{J'}\det \varphi_{J', J},\]
where $\varphi_{J', J}$ is the $3\times 3$ matrix whose rows (resp. colomns) are the rows (resp. colomns) of $\varphi$ at indices from $J'$ (resp. $J$). Notice that $\det \varphi_{J', J}\in k$ and when $\lambda=1$, the 29 non-zero minors of $A_1$ are linearly independent over $k$ by hypothesis (\ref{eqn_generic_x_i}). Therefore for all but finitely many $\lambda\in\bar k$, $W_{\lambda}$ has the same invariant $(L_{\varphi W})_{\varphi\in GL_7(k)}$ as $W_1$ and hence $W_\lambda$ is in the closure of the stratum of $W_1$ for all $\lambda$. In particular $W_0$ is in the closure of the stratum of $W_1$.

Choose $\gamma\in \bar k$ such that the degree $[k(x_1,\dotsc, x_6, \gamma): k(x_1,\dotsc, x_6)]$ is sufficiently large. Let
\[B=\bigg (\begin{array}{ccccccc} 1 &0  &0 &0 &0 &x_1x_5 &x_1x_6\\ 0 &1 &0 &0 &x_2x_4 &0 &\gamma\\ 0 &0 &1 &0 &x_3x_4 &x_3x_5 &0\end{array}\bigg )\] We want to show that $W_B$ is in the same stratum as $W_0$, but $W_B$ is not in the closure of the stratum of $W_1$. Indeed, it is easy to check that $L_{A_0}=L_B$ both have 17 elements and the 17 non-zero minors of $A_0$ (resp. $B$) are linearly independent over $k$. This implies that $L_{\varphi W_0}=L_{\varphi W_B}$ for all $\varphi\in GL_7(k)$ and hence $W_{0}$ and $W_B$ are in the same stratum. On the other hand, as $W_B$ is not in the closure of the thin Schubert cell of $W_1$ (\cite{GS},1.11.1), it is not in the closure of the stratum of $W_1$.

\section{Relation to semi-modules}\label{secsemmod}

In this section we consider the case that $[b]$ is superbasic. By definition, this means that no representative of $[b]$ is contained in a proper Levi subgroup of $G$ defined over $F$. By \cite{ckv}, Lemma 3.1.1 this implies that $G^{\ad}$ is a product of groups of the form $\Res_{F_i|F}PGL_{h_i}$ where the $F_i$ are finite unramified extensions of $F$. Our aim is to compare our stratification to the stratification by semi-modules introduced by de Jong and Oort \cite{deJongOort} for Rapoport-Zink spaces for the (split) $GL_h$-case, resp.~to its natural generalization, the (weak) EL-charts introduced by Hamacher in \cite{hamacher} for affine Deligne-Lusztig varieties and Rapoport-Zink spaces for the superbasic case for unramified groups. These stratifications are first defined for affine Grassmannians and affine Deligne-Lusztig varieties for groups of the form $\Res_{F'|F}GL_{h}$ for some unramified extension $F'$ of $F$ and some $h$.  As the affine Grassmannian resp.~an affine Deligne-Lusztig variety for a product of groups is a product of the individual varieties this induces invariants for products of such groups. And also note that for $\lambda\in\pi_1(G)$,
\begin{eqnarray*}\mathrm{Gr}_{G}^{\lambda}&\simeq& \mathrm{Gr}_{G^{ad}}^{\lambda_{ad}},\\ X^G_{\mu}(b)^{\lambda}&\simeq& X^{G^{ad}}_{\mu_{ad}}(b_{ad})^{\lambda_{ad}},\end{eqnarray*}
 where $\mathrm{Gr}_{G}^{\lambda}$ is the connected component of the (twisted) affine Grassmannian $\mathrm{Gr}_G$ of $G$ which is the fiber of $\lambda$ for the Kottwitz map $\mathrm{Gr}_G\rightarrow \pi_1(G)$, $X^G_{\mu}(b)^{\lambda}=X^G_{\mu}(b)\cap\mathrm{Gr}_G^{\lambda}$, and $\lambda_{ad}, b_{ad}, \mu_{ad}$ are induced from $\lambda, b, \mu$ respectively via the natural projection $G\rightarrow G^{ad}$.   Hence the decomposition of the affine Grassmannian and of affine Deligne-Lusztig varieties by EL-charts for  groups of the form $\Res_{F'|F}GL_{h}$ induces the decomposition for general superbasic case. Our main result, Proposition \ref{propsuperb} shows that this decomposition induced by weak EL-charts coincides with the decomposition of $X_{\mu}^G(b)$ by the functions $f_{\bar g}:J_{\bar{b}}(F)\rightarrow X_*(T')_{\dom}$ where $T'$ is the image of $T$ in $G^{\ad}$, and $\bar g\in G^{\ad}(L)$, compare Remark \ref{remggad}.

By definition we consider the functions $f_{\bar g}$, i.e.~we may assume that $G$ is adjoint. As the affine Grassmannian resp.~an affine Deligne-Lusztig variety for a product of groups is a product of the individual varieties, we may assume that $G=\Res_{F'|F}PGL_{h}$ for some $F'$ and $h$. Using Lemma \ref{lemggad} and the surjectivity of $J_b(F)\rightarrow J_{\bar b}(F)$ for the group $\Res_{F'|F}GL_{h}$ we may assume that already $G=\Res_{F'|F}GL_{h}$. Let $d$ be the degree of the extension $F'|F$, and denote by $\mathcal{I}$ the set of embeddings $\tau: F'\rightarrow \overline F$ into a fixed algebraic closure of $F$. Then the $d$ elements of $\mathcal{I}$ are permuted cyclically by Frobenius.

Let us recall some facts on superbasic conjugacy classes, a reference for this being Section 3.1 of \cite{ckv} and \cite{hamacher}. For $G$ as above, each superbasic $\sigma$-conjugacy class has a representative of the following explicit kind. A superbasic $\sigma$-conjugacy class has Newton slope $\frac{m}{dh}$ for some $m$ coprime to $h$. Fix for each $\tau\in \mathcal{I}$ some $m_{\tau}\in\mathbb{Z}$ such that $\sum_{\tau} m_{\tau}=m$. Then consider $N=\oplus_{\tau\in \mathcal{I}}L^h$ and denote the standard basis of the $\tau$-th of these vector spaces by $e_{1,\tau},\dotsc,e_{h,\tau}$. For $i\in \mathbb{Z}$ we define $e_{i,\tau}$ by $e_{i+h,\tau}=\e e_{i,\tau}$. Then we define $b\in G(L)$ by $b(e_{i,\tau})=e_{i+m_{\tau},\tau}$, hence $b\sigma(e_{i,\tau})=e_{i+m_{\sigma\tau},\sigma\tau}$. One can now check explicitly that this element has the given Newton slope and is hence a representative of the given class. Note that all of these representatives are of length 0 and hence straight.

We give a weakened version of Hamacher's definition of an EL chart, and of the EL chart associated with a lattice (compare  \cite{hamacher}). A weak EL chart for the group $G$ is a subset $A\subset \coprod_{\tau} \mathbb{Z}$ which is bounded from below and contains all $(i,\tau)$ for $i$ sufficiently large, and which satisfies $A+h\subseteq A$. Let $0\neq v\in L^h$, considered as the $\tau$th component of $N$. Then we can write $v$ uniquely as an infinite converging sum $v=\sum_{i\geq i_0}[a_i]e_{i,\tau}$ where the $[a_i]$ are (Teichm\"uller or usual, depending on the characteristic of $L$) representatives of $a_i\in \overline k$ in $L$. The pair $(i,\tau)$ such that $i$ is minimal with $a_i\neq 0$ is called the start index of $v$. Let $\Lambda_0$ be a lattice in $N$ with a decomposition $\Lambda_0=\oplus \Lambda_{0,\tau}$. Then the set of start indices of all $v\in \Lambda_{0,\tau}\setminus \{0\}$ (for all $\tau$) is called the weak EL chart of $\Lambda_0$. If $v$ has start index $(i,\tau)$, then $\e v$ has start index $(i+h,\tau)$. This implies that the weak EL chart of $\Lambda_0$ is indeed a weak EL chart. Note that the EL charts in the sense of Hamacher are also weak EL charts, and that the lattices associated with an affine Deligne-Lusztig variety with only non-negative Hodge slopes have weak EL charts which are automatically EL charts in the stronger sense.

\begin{remark}\label{group_semi_module}There is also a group-theoretic way to describe the decomposition of affine Grassmannian and affine Deligne-Lusztig varieties by weak EL-charts. For any quasi-split and unramified group $G$ over $F$, we have the IAK decomposition:
\[G(L)=\coprod_{\mu\in X_*(T)} I\epsilon^{\mu}K\] with $I\subset K$ an Iwahori subgroup of $G(L)$. From the point of view of the Bruhat-Tits building of $G$ over $L$, this decomposition reflects the fact that for any special point and any chamber in the building, there exists an apartment containing both of them. For $G=\Res_{F'|F}GL_{h}$ we use $G(L)=\prod_{\tau\in\mathcal{I}} GL_h(L)$. We take the Iwahori subgroup $I$ to be the preimage of the group consisting of lower triangular matrices for each component $\tau$ via the map $G(\mathcal{O}_L)\rightarrow G(\bar k)$ which maps $\epsilon$ to 0. We can easily check that the decomposition of the affine Grassmannian and of the affine Deligne-Lusztig varieties by weak EL-charts in this case coincide with the IAK decomposition. More precisely, take $T=\Res_{F'|F}\mathbb{G}_m^h$, then there is a natural identification $X_*(T)=\prod_{\tau\in\mathcal{I}}\mathbb{Z}^h$. For any $\mu=(\mu_{1,\tau}, \cdots, \mu_{h, \tau})_{\tau\in\mathcal{I}}\in X_*(T)$, then $\mu$ corresponds to the EL-chart $A^{\mu}=\coprod_{\tau\in\mathcal{I}} A^{\mu}_{\tau}$ with $A^{\mu}_{\tau}=\{i+\mu_{i, \tau}h+\mathbb{N}h| 1\leq i\leq h\}$.
For general $G$  we have the same assertion if we take the Iwahori subgroup $I\subset G(L)$ such that its image under $G(L)\rightarrow G^{ad}(L)=\prod_{\tau\in\mathcal{I}}\mathrm{PGL}_{h}(L)$ corresponds to the Borel subgroup of the lower triangular matrices for each component.
\end{remark}

\begin{ex}\label{exelchart}
In general, there are many lattices having the same weak EL chart. An exception (in fact the only one) is the following example.

For each $\tau\in \mathcal{I}$ we fix an element $l_{\tau}\in\mathbb{Z}$ and let $A_{\tau}=\mathbb{Z}_{\geq l_{\tau}}$. Let $\Lambda=\bigoplus_{\tau} \Lambda_{\tau}$ be a lattice with weak EL chart $A=\coprod_{\tau}A_{\tau}$. We claim that $\Lambda$ is the lattice generated by all $e_{i,\tau}$ with $i\geq l_{\tau}$. Indeed, let $v\in \Lambda_{\tau}$. Then the start index of $v$ is greater or equal to $l_{\tau}$, and hence it is contained in the lattice generated by all $e_{i,\tau}$ with $i\geq l_{\tau}$. On the other hand let $i_0$ be maximal with $e_{i_0,\tau}\notin \Lambda_{\tau}$. We have to show that $i_0<l_{\tau}$. Assume that $i_0\geq l_{\tau}$. Let $w\in \Lambda$ with start index $(i_0,\tau)$ (which is possible as this is contained in the EL chart $M$ of $\Lambda$). Then $e_{i_0,\tau}=w+\sum_{i>i_0}[c_i]e_{i,\tau}$ for some $c_i$.  By maximality of $i_0$, $e_{i,\tau}\in \Lambda_{\tau}$ for all $i>i_0$. Hence $e_{i_0,\tau}\in\Lambda_{\tau}$ which contradicts $e_{i_0,\tau}\notin\Lambda_{\tau}$ and shows that we must have $i_0<l_{\tau}$.
\end{ex}

In the superbasic case (and for $G$ as above) we have the following explicit description of $J_b(F)$ and of the quotient that we need to consider for the function $f_g$.
\begin{lemma}\label{lemjsuperbasic}
Let $G=\Res_{F'|F}\GL_h$ with $F'/F$ unramified of degree $d$ and let $b\in G(L)$ be superbasic of slope $\frac{m}{dh}$. Then $J_b(F)$ is the central division algebra over $F'$ of rank $h^2$ and invariant $\frac{m}{h}$. In particular, $\mathbb{Z}\cong J_b(F)/(J_b(F)\cap K)$ where an isomorphism is given by $l\mapsto \pi^l\cdot(J_b(F)\cap K)$ with $\pi\in J_b(F)$ a uniformizer which sends $e_{\tau,i}$ to $e_{\tau,i+1}$ for all $i\in\mathbb{Z}$ and $\tau\in \mathcal{I}$ if we regard it as an element in $G(L)$.
\end{lemma}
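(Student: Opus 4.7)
The plan is to recognize $(N, b\sigma)$ as an $F'$-isocrystal and apply Dieudonn\'e--Manin. Base-changing $G = \Res_{F'|F}\GL_h$ to $L$ gives $G_L \cong \prod_{\tau \in \mathcal{I}}\GL_h$, and correspondingly $N = \bigoplus_{\tau} L^h$ carries a natural $F' \otimes_F L \cong \prod_\tau L$-action under which it is free of rank $h$. The operator $\Phi := b\sigma$ sends the $\tau$-component to the $\sigma\tau$-component, which matches the Frobenius action on $F' \otimes_F L$, so $\Phi$ is $\sigma$-linear with respect to the $F'$-structure. Thus $(N, \Phi)$ is an $F'$-isocrystal of rank $h$, and unraveling the definitions identifies $J_b(F)$ with the unit group of $\End_{F'\text{-isoc}}(N, \Phi)$.

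To determine the slope I would compute $\Phi^d$. Iterating $b\sigma(e_{i,\tau}) = e_{i + m_{\sigma\tau}, \sigma\tau}$ exactly $d$ times, and using $\sigma^d|_{\mathcal{I}} = \id$ together with $\sum_\tau m_\tau = m$, gives $\Phi^d(e_{i,\tau}) = e_{i+m, \tau}$; combined with $e_{i+h,\tau} = \e e_{i,\tau}$ this shows $(N,\Phi)$ is isoclinic of slope $m/h$. Since $\gcd(m,h)=1$, the isocrystal is simple, so its endomorphism algebra is the central division $F'$-algebra $D$ of rank $h^2$ and invariant $m/h$, which proves the first claim.

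For the uniformizer statement, I first check that $\pi \in J_b(F)$: both $\pi b\sigma$ and $b\sigma \pi$ send $e_{i,\tau}$ to $e_{i + m_{\sigma\tau}+1, \sigma\tau}$. Next, $\pi^h(e_{i,\tau}) = e_{i+h,\tau} = \e e_{i,\tau}$, so $\pi^h = \e$, a uniformizer of $F'$ (as $F'/F$ is unramified); equivalently, the normalized valuation satisfies $v_D(\pi) = 1$. Under the bijection between lattices in $N$ and cosets of $K$ in $G(L)$, the standard lattice $\Lambda_0 = \bigoplus_\tau \O_L\cdot\{e_{1,\tau},\dots,e_{h,\tau}\}$ corresponds to the maximal order $\O_D \subset D$ under the Dieudonn\'e--Manin classification, so $J_b(F) \cap K = \O_D^\times$. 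The valuation $v_D : D^\times \twoheadrightarrow \Z$ with kernel $\O_D^\times$ then yields the asserted isomorphism $J_b(F)/(J_b(F) \cap K) \cong \Z$ sending $\pi \mapsto 1$.

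The step requiring most care is the initial identification --- verifying that the $F'$-structure, the Frobenius, and the chosen basis are compatible in a way that makes $(N,\Phi)$ a simple rank-$h$ $F'$-isocrystal of slope $m/h$ (with the correct sign of the invariant), and that $\Lambda_0$ is precisely the $\O_D$-stable lattice --- since the $\Res_{F'|F}$ bookkeeping makes it easy to slip a permutation or a sign. Everything after that is the standard Dieudonn\'e--Manin dictionary for $\GL_h$ over $F'$.
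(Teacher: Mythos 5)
Your proof is correct and takes essentially the same route as the paper: both arguments reduce to the classical description (de Jong--Oort 5.4) of the simple slope-$m/h$ isocrystal over $F'$ via the key computation that $(b\sigma)^d$ acts on each $\tau$-component by $e_{i,\tau}\mapsto e_{i+m,\tau}$, your relative $F'$-isocrystal packaging being just another way of saying that the $\tau_0$-component of $j$ determines $j$ and must commute with $(b\sigma)^d_{\tau_0}$. The only difference is that you spell out the final step (identifying $J_b(F)\cap K$ with the units of the maximal order and obtaining the $\Z$-quotient via the valuation with $\pi\mapsto 1$), which the paper dismisses as an immediate consequence.
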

\begin{proof}
The corresponding statement for the split case $G=\GL_h$ is known, compare \cite{deJongOort}, 5.4. We show how to reduce the general statement to this. We have $G(L)\cong\prod_{\tau\in \mathcal{I}} \GL_h$. Let $j=(j_{\tau})$ be the decomposition of an element of $J_b(F)\subseteq G(L)$.  We fix one embedding $\tau_0\in I.$ By definition of $J_b(F)$, we have $j_{\sigma\tau}\circ b_{\sigma\tau}=b_{\sigma\tau}\circ \sigma(j_{\tau})$ for all $\tau$. Hence $j_{\tau_0}\in \GL_h(L)$ determines $j$, and has to satisfy the condition $j_{\tau_0}\circ (b\sigma)^d_{\tau_0}=(b\sigma)^d_{\tau_0}\circ j_{\tau_0}$. But $(b\sigma)^d$ is superbasic of slope $\frac{m}{d}$, and the first assertion follows. The second is an immediate consequence.
\end{proof}

We now consider the functions $f_{\bar g}$ on superbasic affine Deligne-Lusztig varieties. Our main result in this case is
\begin{prop}\label{propsuperb}
The decomposition of the affine Grassmannian, and of each affine Deligne-Lusztig variety according to weak EL-charts coincides with the decomposition by the $f_{\bar g}:J_{\bar{b}}(F)\rightarrow X_*(T')_{\dom}$ together with $\kappa_G(g)$.

In the case $G=\Res_{F'|F}GL_{h}$ there is the following more precise description. Let $g\cdot\Lambda_0$ be a lattice with weak EL chart $A=A^{\mu}$ where $A^{\mu}$ is defined in Remark ~\ref{group_semi_module} with $\mu\in X_*(T)$. Then $f_g(\pi^l)=\mathrm{inv}(\pi^l, \epsilon^\mu)$ for all $l\in\mathbb{Z}$.
Furthermore, these values for $l=1,\dotsc, h$ determine the function $f_g$.
\end{prop}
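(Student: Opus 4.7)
The plan is to use the IAK decomposition of Remark~\ref{group_semi_module} as the bridge between weak EL-charts and the invariant $f_g$, and to prove the three assertions in the order they are stated. After the reductions carried out in the text preceding the proposition (Lemma~\ref{lemggad}, surjectivity of $J_b(F) \to J_{\bar b}(F)$ from Lemma~\ref{lemjsuperbasic}, and reduction to a product), I may assume $G = \Res_{F'|F} \GL_h$; the first assertion in its full generality then follows formally by passing to $G^{\ad}$.

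The heart of the argument is the containment $\pi^{-l} I \pi^l \subseteq K$ for every $l \in \mathbb{Z}$. Geometrically, the lattices $\Lambda_0,\pi\Lambda_0,\ldots,\pi^{h-1}\Lambda_0$ are the vertices of the alcove in the Bruhat-Tits building whose pointwise stabiliser is the Iwahori $I$ of Remark~\ref{group_semi_module}; $\pi$ cyclically permutes them, and since $\pi^h = \epsilon$ is central the claim holds for all $l$. Alternatively one can verify $\pi^{-1} I \pi \subseteq I$ by a direct matrix computation, using that the strict upper-triangular entries of elements of $I$ lie in $\epsilon\mathcal{O}_L$ and that each $\pi_{\tau}$ is a cyclic permutation matrix with an $\epsilon$ in the top-right corner. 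Granted this, the middle assertion is immediate: by Remark~\ref{group_semi_module}, the hypothesis that $g\Lambda_0$ has weak EL-chart $A^{\mu}$ is equivalent to $g \in I\epsilon^{\mu}K$, so writing $g = i\epsilon^{\mu}k$ gives $\pi^{-l}g = (\pi^{-l} i \pi^l)\,\pi^{-l}\epsilon^{\mu}\,k$ with first factor in $K$; hence $K\pi^{-l}gK = K\pi^{-l}\epsilon^{\mu}K$, i.e.\ $f_g(\pi^l) = \mathrm{inv}(\pi^l,\epsilon^{\mu})$.

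Since $\pi^h = \epsilon$ is central, $\mathrm{inv}(\pi^{l+h},g) = \mathrm{inv}(\pi^l,g) - \nu_Z$, where $\nu_Z \in X_*(T)$ is the central cocharacter associated to $\epsilon$. Combined with the identification $J_b(F)/(J_b(F)\cap K) \cong \mathbb{Z}\cdot\pi$ of Lemma~\ref{lemjsuperbasic} and the fact that $f_g$ is constant on right $(J_b(F)\cap K)$-cosets, this yields the last assertion: the values $(f_g(\pi^l))_{l=1}^h$ determine $f_g$ on all of $J_b(F)$.

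For the decomposition statement it remains to check that distinct weak EL-charts yield distinct $f_g$. Suppose $g\Lambda_0$ and $g'\Lambda_0$ have charts $A^{\mu}, A^{\mu'}$ with $f_g = f_{g'}$; set $\lambda^{(l)}_{\tau} := \mathrm{inv}(\pi^l,\epsilon^{\mu})_{\tau}$, which by the previous step equals the analogous quantity for $\mu'$ for every $\tau$ and every $l$. A direct calculation with the shift $\pi(e_{i,\tau}) = e_{i+1,\tau}$ shows that $\pi^{-l}\epsilon^{\mu}\Lambda_{0,\tau}$ is the diagonal lattice with exponent tuple
$$\nu^{(l)}_{\tau} = (\mu_{l+1,\tau},\ldots,\mu_{h,\tau},\,\mu_{1,\tau}-1,\ldots,\mu_{l,\tau}-1),$$
so $\lambda^{(l)}_{\tau}$ is the multiset of these entries. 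Comparing multiplicity functions, $\lambda^{(l+1)}_{\tau}$ differs from $\lambda^{(l)}_{\tau}$ only by dropping one occurrence of $\mu_{l+1,\tau}$ and adding one occurrence of $\mu_{l+1,\tau}-1$, which determines $\mu_{l+1,\tau}$ uniquely. Using $\lambda^{(0)}_{\tau} = \lambda^{(h)}_{\tau} + (1,\ldots,1)$ to access the $l=0$ datum, the full tuple $\mu_{\tau}$ is recovered from $(\lambda^{(l)}_{\tau})_{l=1}^h$, forcing $\mu = \mu'$. Passing back from $\Res_{F'|F}\GL_h$ to $G^{\ad}$ via Lemma~\ref{lemggad} then gives the first assertion of the proposition. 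The main obstacle is the very first step --- pinning down that the Iwahori of Remark~\ref{group_semi_module} is vertex-by-vertex stabilised by the shifts $\pi^l$, i.e.\ that the superbasic representative $b$ and the chosen Iwahori are compatible in just the right way; once that is in place the remaining arguments are essentially direct calculation and combinatorics.
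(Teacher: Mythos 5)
Your proposal is correct and follows essentially the same route as the paper: the identity $f_g(\pi^l)=\mathrm{inv}(\pi^l,\epsilon^\mu)$ via $\pi$ normalizing the Iwahori of Remark~\ref{group_semi_module} and $g\in I\epsilon^\mu K$, reduction to $l=1,\dotsc,h$ via Lemma~\ref{lemjsuperbasic} and the centrality of $\pi^h=\epsilon$, and the injectivity of $\mu\mapsto(\mathrm{inv}(\pi^l,\epsilon^\mu))_l$. The only difference is that you spell out two steps the paper merely asserts (that $\pi I\pi^{-1}=I$, and the explicit multiset argument recovering $\mu$ from the relative positions), and these verifications are correct.
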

The last statement is in particular a stronger version of Proposition \ref{propfin} for this case.
\begin{proof}
By the reduction we gave above, the general assertion follows from the one for the special case $G=\Res_{F'|F}GL_{h}$. So from now on we assume that we are in this case.

First consider the second assertion. Note that $\pi I \pi^{-1}=I$ and $g\in I\epsilon^\mu K$ by definition. Hence $f_g(\pi^l)=\mathrm{inv}(\pi^l, \epsilon^\mu)$ for all $l\in\mathbb{Z}$. For the last assertion, by Lemma \ref{lemjsuperbasic} and the fact that $f_g$ is constant on $K\cap J_b(F)$-cosets, it is enough to consider the values on all $\pi^l$ for $l\in\mathbb Z$. On the other hand, $\pi^h$ maps $e_{\tau,i}$ to $e_{\tau,i+h}$, and hence coincides with the central scalar multiplication by $\e$. Thus the value of $f_g$ on $\pi^{l+h}$ is determined by that on $\pi^l$ (by adding the corresponding central cocharacter). The last assertion follows.

The first statement is implied by the second and third one and the fact that for any $\mu, \mu'\in X_*(T)$,
\[\mu=\mu'\Longleftrightarrow\mathrm{inv}(\pi^l, \epsilon^\mu)=\mathrm{inv}(\pi^l, \epsilon^{\mu'}), \text{ for all } l\in\mathbb{Z}.\]\end{proof}
\section{Relation to Ekedahl-Oort strata}
In this section we compare our stratification to the stratification of RZ spaces or affine Deligne-Lusztig varieties induced by the truncation of level 1, or equivalently, the EO-invariant. It turns out that both stratifications are incomparable in general. However, they are very closely related in several particularly interesting cases where the EO stratification is known to induce a paving of the affine Deligne-Lusztig variety by classical Deligne-Lusztig varieties.

\subsection{The general case}

Classically, and on moduli spaces of $p$-divisible groups, the EO invariant is defined as the isomorphism class of the $p$-torsion of the $p$-divisible groups with the induced additional structure. The corresponding group theoretic analog is called the truncation of level 1 and was introduced in \cite{trunc1}. It coincides with the EO invariant in the RZ cases (see the introduction of \cite{trunc1}) and is defined on all affine Deligne-Lusztig varieties. Let us briefly summarize its definition.

By definition the truncation of level 1 is the invariant given by the $K$-$\sigma$-conjugacy class of $K_1 h K_1$ for $h\in G(L)$. Here, $K_1$ is the inverse image of $1$ under the projection $K\rightarrow G(\overline{k})$. To recall the classification of truncations of level 1 fix some $h\in G(L)$. By the Cartan decomposition there is a unique dominant coweight $\mu$ such that $h\in K\mu(\e)K$. Let $W$ denote the (absolute) Weyl group of $T$ in $G$. For each $w\in W$ we choose a representative $w$ in $N_T(\mathcal{O})$ where $N_T$ is the normalizer of $T$ in $G$. Let $M_{\mu}$ be the centralizer of $\mu$ and let $^{\mu}W=\sigma^{-1}( {}^{M_{\mu}}W)$. If $M$ is a Levi subgroup of $G$ containing $T$ let $W_M$ be the Weyl group of $M$ and denote by ${}^M W$ the set of elements $x$ of $W$ that are shortest representatives of their coset $W_Mx$. Let $x_{\mu}=w_0w_{0,{\mu}}$ where $w_0$ denotes the longest element of $W$ and where $w_{0,{\mu}}$ is the longest element of $W_{M_\mu}$. Let $\tau_{\mu}=x_{\mu}\e^{\mu}$. Then $\tau_{\mu}$ is the shortest element of $W\epsilon^{\mu}W$. Then there is a unique $w\in  {}^{\mu}W$ such that $h$ is contained in the $K$-$\sigma$-conjugacy class of $K_1w\tau_{\mu}K_1$. The pair $(w,\mu)$ is called the truncation of level 1 of $h$.

To apply this to an affine Deligne-Lusztig variety $X_{\mu}(b)$, let $g\in X_{\mu}(b)$. Then the element $g^{-1}b\sigma(g)$ is well-defined up to $K$-$\sigma$-conjugation. Its (well-defined) truncation of level 1 is of the form $(w,\mu)$ where $\mu$ is the fixed cocharacter. Thus the truncation of level 1 defines a subdivision of $X_{\mu}(b)$ indexed by ${}^{\mu}W$ (or equivalently by $\sigma^{-1}(W_{M_{\mu}})\backslash W$). By \cite{trunc1},Theorem 1.4, it is a stratification.

A second, coarser and closely related stratification is the Bruhat stratification considered in \cite{WedhornBruhat}. Its definition in our language is as follows: Let $h\in G(L)$ and let $(w,\mu)$ be its truncation of level 1. Then its Bruhat invariant is the pair $(\overline w,\mu)$ where $\overline w$ is the class of $w$ in $\sigma^{-1}(W_{M_{\mu}})\backslash W/(x_{\mu}W_{M_{\mu}}x_{\mu}^{-1})$. Again, applying the Bruhat invariant to $g^{-1}b\sigma(g)$ induces a stratification of affine Deligne-Lusztig varieties.

For obvious reasons, the EO stratification and the Bruhat stratification cannot be finer than ours. Indeed, both of them are invariants computed via the elements $g^{-1}b\sigma(g)$ and are thus invariant under the $J_b(F)$-action. On the other hand, our invariant is designed in such a way that the strata are permuted by $J_b(F)$ and have bounded stabilizer groups. We expect that even if one would replace our invariant by the coarser one obtained by $J_b(F)$-orbits of the functions $f_g$, the EO stratification is still not finer.

\begin{ex}
We give an example which shows that also our stratification is not a refinement of the Bruhat stratification (and thus in particular not of the EO stratification). For this we consider the case $G=GL_9$, choose the Borel subgroup $B$ to consist of the lower triangular matrices and let $T$ be the diagonal torus. Let $[b]$ be superbasic of slope $4/9$ and $\mu$ minuscule with $[b]\in B(G,\mu)$, i.e. $\mu=(0^{(5)},1^{(4)})$. Let $e_1,\dotsc, e_9$ be the standard basis of $L^9$. Let $b\in [b]$ be the representative with $b\sigma(e_i)=e_{i+4}$ where $e_{i+9}=\e e_i$.

Let $A_1$ be the semi-module $A_1=\{1,2,5,6,7,\dotsc\}$. Then an explicit computation (using the algorithm to compute truncations of level 1 suggested by the proof of the classification theorem in \cite{trunc1}) gives the following: The lattice generated by all $e_i$ with $i\in A_1$ is in the semi-module stratum associated with $A_1$, and its Bruhat invariant is $(\overline{w_1},\mu)$ where $\mu$ is as above and where the shortest representative of $\overline{w_1}$ is the three-cycle $(465)\in S_9=W$. On the other hand consider the lattice generated by $e_1+ce_3, e_2+de_4$ and all $e_i$ with $i\geq 5$. It has the same semi-module. However, if $\sigma^2(d)\neq c$, then its Bruhat invariant is $(\overline{w_1}',\mu)$ where $\mu$ is as above and where the shortest representative of $\overline{w_1}'$ is $(36475)\in S_9=W$.
\end{ex}

\subsection{The Bruhat-Tits stratification}\label{sec42}

There are a small number of cases where the connected components of Ekedahl-Oort strata are irreducible, and isomorphic to Deligne-Lusztig varieties. The first larger class of such affine Deligne-Lusztig varieties was studied by Vollaard and Wedhorn, and occurs for unitary groups of signature $(1,n-1)$. We discuss it in more detail in Section \ref{secVW}. In \cite{GH}, G\"ortz and He associate with an affine Deligne-Lusztig variety the triple of the group $G$, the Hodge weight, and the associated parahoric subgroup (in our case the group $K$). Such a triple determines a unique basic $\sigma$-conjugacy class $[b]\in B(G,\mu)$. They define a group-theoretic property to be of Coxeter type which turns out to imply that Vollaard-Wedhorn's theory can be generalized to the corresponding basic affine Deligne-Lusztig variety. They give a complete list of 21 cases of triples of Kottwitz type and describe the stratifications of the corresponding basic affine Deligne-Lusztig varieties.

\begin{conj}
If $(G,\mu, K)$ is of Coxeter type, then the paving of the corresponding basic affine Deligne-Lusztig variety by classical Deligne-Lusztig varieties induced by the Ekedahl-Oort stratification coincides with the decomposition according to the functions $f_g$.
\end{conj}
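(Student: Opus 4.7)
The conjectured equality of decompositions reduces, given the authors' general observation that the EO decomposition is already coarser than the $f_g$-decomposition (each EO stratum being a union of $f_g$-strata), to showing the reverse refinement: that $f_g$ is \emph{constant} on each of the irreducible Deligne-Lusztig varieties appearing as EO strata in a Coxeter-type case. My plan is to prove this constancy directly.

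The main technical step is to apply Proposition~\ref{propfin}(1) to each EO stratum: it supplies a finite set $j_1,\ldots,j_d \in J_b(F)$ such that $f_g$ is determined by the values $\inv(j_i, g)$, so it is enough to show that each map $g \mapsto \inv(j_i, g)$ is constant on the stratum. I would then invoke the G\"ortz--He framework: in each of the 21 Coxeter-type cases, an EO stratum comes with a canonical identification as a classical Deligne-Lusztig variety inside a partial affine flag variety attached to a parahoric subgroup $P \subset J_b(F)$. The idea is to arrange the $j_i$ so that they run through (representatives of) the vertices of the simplex of $P$ in the Bruhat-Tits building of $J_b(F)$, and then to verify -- using the explicit description of the stratum as a $P$-equivariant subvariety of the flag variety -- that the relative position of $g$ to each of the vertex lattices is forced to remain constant by the Coxeter property. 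Once constancy is established on a single stratum, the $J_b(F)$-equivariance of both decompositions, combined with $J_b(F)$-transitivity on strata of fixed combinatorial type, propagates the statement to all strata of the same type; the remaining separation between strata of different combinatorial types comes for free from the fact that the decomposition by $f_g$ was a priori finer.

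The main obstacle I expect is making the matching between the abstract $j_i$'s of Proposition~\ref{propfin} and the geometric vertices of the simplex of $P$ tight enough to actually run case by case through the G\"ortz-He list: the proposition only guarantees \emph{some} finite distinguishing set, whereas the essential geometric content of the conjecture is that, in Coxeter type, this set can be chosen to come from a single simplex in the building. An intrinsic, uniform proof -- as opposed to a tabulation over the 21 triples of \cite{GH} -- would presumably require reinterpreting $f_g$ as a distance-type function on the Bruhat-Tits building of $J_b(F)$ and characterizing the Coxeter condition as the statement that its fibers are exactly the stars of building simplices. Finding such an intrinsic reformulation seems to be the real heart of the problem; in its absence, the case-by-case verification is the most plausible route, and locating those data $(G,\mu,K)$ where this direct approach breaks down would be a natural way to diagnose how far beyond Coxeter type the conjecture can be extended.
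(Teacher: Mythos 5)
You should first be aware that the statement you are proving is left as a \emph{conjecture} in the paper: the authors do not prove it in general, but only verify it in two of the cases on the G\"ortz--He list (the $GSp_4$ case of Kaiser, Proposition \ref{propkaiser}, and the Vollaard--Wedhorn case of $GU(1,n-1)$ in Section \ref{secVW}). Your proposal is likewise a programme rather than a proof, and it contains one genuine error of reduction. The decomposition in the conjecture is not the Ekedahl--Oort stratification itself but its refinement into classical Deligne--Lusztig varieties (the Bruhat--Tits paving, whose pieces are indexed by vertex lattices, or simplices in the building of $J_b(F)$), so ``coincides'' requires two nontrivial inclusions. Your claim that one direction comes for free from ``the EO decomposition being coarser than the $f_g$-decomposition'' is not available: the paper only observes that EO cannot be \emph{finer} than the $f_g$-decomposition, and it exhibits an explicit superbasic $GL_9$ example showing that the $f_g$-strata are in general \emph{not} contained in EO strata. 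Accordingly, in the paper's verification of the Vollaard--Wedhorn case the separation direction (that $f_g$ determines the Bruhat--Tits stratum) is the harder half: it is proved by reconstructing the vertex lattice $\Lambda$ from the relative positions $\inv(M_0,\Lambda')$ with respect to all vertex lattices $\Lambda'$ of type $(1;i)$, via a descending induction over vertex lattice types that rests on a counting lemma for hermitian forms over $\mathbb{F}_{p^2}$ (the computation of $\Theta_m$ versus $\Psi_m$). None of this ``comes for free'' from the general framework.

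The constancy direction also does not follow from Proposition \ref{propfin} in the way you suggest. That proposition guarantees the existence of \emph{some} finite distinguishing set $j_1,\dotsc,j_d$, but gives no control over whether these can be taken to be the vertices of the parahoric simplex attached to a stratum; in the verified case the paper instead proves constancy of $\inv(\cdot,\Lambda')$ on a stratum directly, using the irreducibility of each Vollaard--Wedhorn stratum, semicontinuity of the volumes $v(\tilde M_0,i)=\vol(\tilde M_0+\e^i\Lambda')$, and the identity $v(\Lambda,i)=v(\tilde M_0,i)-n_{\tilde M_0,i}$ from \cite{Vollaard}. So the ``main obstacle'' you identify at the end --- matching the abstract $j_i$ with building-theoretic data and characterizing the Coxeter condition intrinsically --- is in fact the entire mathematical content of the conjecture, and your proposal does not supply it; even the case-by-case route would require, for each of the 21 triples of \cite{GH}, lattice-theoretic arguments of the kind the paper carries out for $GU(1,n-1)$, which are absent here.
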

We also expect that a more general version of this conjecture holds, where one replaces $K$ by a general parahoric $P$ such that $(G,\mu,P)$ is of Coxeter type, and uses the obvious generalization of our invariant. Below we verify our conjecture in two important example cases where an explicit description of the stratification is available in the literature.

\subsubsection{The affine Deligne-Lusztig variety associated with the supersingular locus in the moduli space of abelian surfaces}
We consider $G=GSp_4$, $[b]$ basic of slope $\frac{1}{2}$, and $\mu=(1,1,0,0)$ the unique minuscule element with $b\in B(G,\mu)$. Then points of the corresponding affine Deligne-Lusztig variety correspond to  Dieudonn\'e lattices in a polarized supersingular isocrystal of dimension 4 which are self-dual up to a factor. Here a lattice is called a Dieudonn\'e lattice if it is invariant under $\Phi=b\sigma$ and under $V=\e\Phi^{-1}$. For $F=\Q_p$ this affine Deligne-Lusztig variety can be used to study the supersingular locus in the Siegel moduli space of principally polarized abelian surfaces. Its geometry has been described by many people, Moret-Bailly \cite{MoBa} and Oort \cite{OortK}, and later Kaiser \cite{Kai}, and Kudla and Rapoport \cite{KudlaRapo}. We summarize the results in a way enabling us to relate them to our theory. We continue to consider both the function field case and the arithmetic case.

Let $(N,\langle,\rangle, F)$ be a polarized supersingular isocrystal of dimension 4, i.e. with $N\cong L^4$. We can choose a basis $e_1,e_2,f_1,f_2$ of $N$ in such a way that $F(e_i)=e_{i+1}$ (with $e_2=\e e_0$) and with analogous notation $F(f_i)=f_{i+1}$, and such that $\langle e_1, f_2\rangle=\langle e_2, f_1\rangle=1$ and all other pairings of two of the basis vectors are 0.

Let $\Lambda_0$ be the standard lattice in $N$ generated by our chosen basis. Then $g\mapsto g\Lambda_0$ induces a bijection between $X_{\mu}(b)$ and the set of Dieudonn\'e lattices $M$ in $N$ such that $\langle\cdot,\cdot\rangle|_M$ is perfect up to a factor. The classification by Kaiser subdivides the set of these lattices into two subsets. One is the $J_b(F)$-orbit of $\Lambda_0$ itself, that is the set of all Dieudonn\'e lattices defined over $\mathbb{Q}_{p^2}$ and self-dual up to a factor with respect to $\langle \cdot,\cdot\rangle$. For the other subset we first consider a family of such lattices parametrized by $\mathbb{P}_{\mathbb{F}_p}^1\setminus\mathbb{P}_{\mathbb{F}_p}^1(\mathbb{F}_{p^2})$ mapping a point $x$ to the lattice generated by $e_1+[x]f_0,e_2,f_1,f_2$ where $f_0=\e^{-1}f_2$ and where $[x]$ is the Teichm\"uller representative of $x$. Then the second kind of lattices is the $J_b(F)$-orbit of this family. One obtains a stratification of $X_{\mu}(b)$ whose strata are either a single point of the form $j\Lambda_0$ for some $j\in J_b(F)$  or some subset $j\{\langle e_1+[x]f_0,e_2,f_1,f_2\rangle\mid x\in \mathbb{P}_{\mathbb{F}_p}^1\setminus \mathbb{P}_{\mathbb{F}_p}^1(\mathbb{F}_{p^2}) \}$, and thus isomorphic to $\mathbb{P}_{\mathbb{F}_p}^1\setminus \mathbb{P}_{\mathbb{F}_p}^1(\mathbb{F}_{p^2})$. One could also characterize these strata by the property that they are either a single lattice of the form $j\Lambda_0$, or contained with corank 1 in a lattice of the form $j\langle e_1,e_2,f_0,f_1\rangle$, which then determines the stratum. The closure of a one-dimensional stratum is isomorphic to $\mathbb{P}^1$, the complement of its open stratum consists of $p^2+1$ of the zero-dimensional strata. Each $0$-dimensional stratum lies in the closure of exactly $p+1$ strata of dimension 1. More precisely, the incidence graph of this configuration of projective lines is described by a union of copies of the Bruhat-Tits building of $J_b(F)$.

Let us now compute the functions $f_g$ for the elements $g\in X_{\mu}(b)$. For $g$ corresponding to a lattice $\Lambda$ of the first kind we may choose a representative of $gK$ such that $g\in J_b(F)$. Then $f_g(g)$ is trivial and this condition determines $f_g$. For $g$ of the second kind $gK$ does not have a representative in $J_b(F)$, so in particular, $f_g$ does not take the value $0$. To compute $f_g$ for this case we assume (by modifying $g$ by an element of $J_b(F)$) that the corresponding lattice is generated by $e_1+[x]f_0,e_2,f_1,f_2$ with $x$ as above. Then $f_g$ takes its minimal possible value $(1,0,0,-1)$ precisely on those $j\in J_b(F)$ with $j(\Lambda_0)$ generated by $[a]e_1+[b]f_0, e_2,f_1,f_2$ with $(a:b)\in \mathbb{P}_{\mathbb{F}_p}^1(\mathbb{F}_{p^2})$. The union of these lattices is the lattice generated by $e_1,e_2,f_0,f_1$ which determines the stratum containing $e_1+[x]f_0,e_2,f_1,f_2$. Conversely, using an argument as in the proof of the claim of Section \ref{secclos}, one can see that the function $f_g$ for $g\Lambda_0=\langle e_1+[x]f_0,e_2,f_1,f_2\rangle$ is determined by the dimensions of the intersections of $g\Lambda_0/\langle e_2,e_3,f_1,f_2\rangle$ (which is generated by $e_1+[x]f_0$) with all sub-vector spaces of $\langle e_1,e_2,f_0,f_1\rangle/\langle e_2,e_3,f_1,f_2\rangle$ which are defined over $\mathbb{F}_{p^2}$. In particular, it is independent of $x$ as long as $x\notin\mathbb{F}_{p^2}$. Altogether we have shown the following proposition.

\begin{prop}\label{propkaiser}
The above stratification of the moduli space of two-dimensional principally polarized $p$-divisible groups coincides with the stratification by the functions $f_g$.
\end{prop}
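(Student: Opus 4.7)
The plan is to verify that each Kaiser stratum coincides with one $f_g$-stratum by treating the zero-dimensional and one-dimensional pieces separately, and in each case checking that $f_g$ is constant on a Kaiser stratum but distinguishes different Kaiser strata.

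For a zero-dimensional Kaiser stratum $\{j\Lambda_0\}$, I take $g=j$ as a representative of this $K$-coset; then $f_g(j)=\inv(j,j)=0$, so any $h\in G(L)/K$ with $f_h=f_g$ must satisfy $j^{-1}h\in K$, i.e.\ $hK=jK$. Hence the $f_g$-stratum is exactly $\{j\Lambda_0\}$. Distinct zero-dimensional strata $\{j\Lambda_0\},\{j'\Lambda_0\}$ are then separated because their $f_g$'s take the value $0$ at different elements of $J_b(F)$.

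For the one-dimensional Kaiser stratum indexed by $j\in J_b(F)$, after translating by $j^{-1}$ I may assume $j=1$ and write $g_x$ for the element corresponding to $\langle e_1+[x]f_0,e_2,f_1,f_2\rangle$ with $x\in \mathbb{P}^1_{\mathbb{F}_p}\setminus \mathbb{P}^1_{\mathbb{F}_p}(\mathbb{F}_{p^2})$. Adapting the argument of the claim in Section \ref{secclos} to the symplectic setting, the computation of $f_{g_x}(j')$ for $j'\in J_b(F)$ reduces, via the sandwich $\langle e_2,e_3,f_1,f_2\rangle\subseteq g_x\Lambda_0\subseteq \langle e_1,e_2,f_0,f_1\rangle$ together with the obvious constraints on which $j'\Lambda_0$ contribute, to the dimensions of the intersections of the image of $g_x\Lambda_0$ in the two-dimensional quotient $\langle e_1,e_2,f_0,f_1\rangle/\langle e_2,e_3,f_1,f_2\rangle$ with all $\mathbb{F}_{p^2}$-rational lines of this quotient. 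Since the image is the line spanned by $e_1+[x]f_0$ and $x\notin \mathbb{F}_{p^2}$, this line is not $\mathbb{F}_{p^2}$-rational, so these intersections vanish uniformly in $x$. Hence $f_{g_x}$ is independent of $x$, and the entire one-dimensional Kaiser stratum lies in a single $f_g$-stratum.

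To conclude, I must show that $f_{g_x}$ recovers the Kaiser stratum and is not shared with any other one. The minimum value $(1,0,0,-1)$ of $f_{g_x}$ is attained exactly on those $j'\in J_b(F)$ with $j'\Lambda_0\subset \langle e_1,e_2,f_0,f_1\rangle$, so from $f_{g_x}$ I recover the ambient lattice $\langle e_1,e_2,f_0,f_1\rangle$, hence the Kaiser label $j=1$. Since the value $0$ is never attained, such strata are also disjoint from every zero-dimensional $f_g$-stratum. The main obstacle is justifying cleanly the intersection-dimension description of $f_g$ in the symplectic case, in particular the observation that only lattices $j'\Lambda_0$ sandwiched between $\langle e_2,e_3,f_1,f_2\rangle$ and $\langle e_1,e_2,f_0,f_1\rangle$ contribute nontrivial information to $f_{g_x}$; given this, the rest is a direct reformulation of the explicit analysis already carried out in the paragraphs preceding the proposition.
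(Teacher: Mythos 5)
Your proposal follows essentially the same route as the paper: the zero-dimensional Kaiser strata are characterized exactly as in the text by the value $0$ of $f_g$, and for the one-dimensional strata you use the same two ingredients, namely the reduction (via the sandwich $\langle e_2,e_3,f_1,f_2\rangle\subseteq g_x\Lambda_0\subseteq\langle e_1,e_2,f_0,f_1\rangle$ and the argument of the claim of Section \ref{secclos}) showing that $f_{g_x}$ only depends on intersection dimensions with $\mathbb{F}_{p^2}$-rational subspaces of the two-dimensional quotient, hence is independent of $x$, and the recovery of the Kaiser label from the locus where the minimal value $(1,0,0,-1)$ is attained. One intermediate claim is misstated: that locus is \emph{not} the set of all $j'\in J_b(F)$ with $j'\Lambda_0\subseteq\langle e_1,e_2,f_0,f_1\rangle$ (for instance $j'=\e$ and $j'=b$ satisfy this containment, but $f_{g_x}(\e)=(0,-1,-1,-2)\neq(1,0,0,-1)$, and likewise $f_{g_x}(b)\neq(1,0,0,-1)$); it consists precisely of those $j'$ with $j'\Lambda_0=\langle [a]e_1+[b]f_0,e_2,f_1,f_2\rangle$ for $(a:b)\in\mathbb{P}^1_{\mathbb{F}_p}(\mathbb{F}_{p^2})$, as stated in the paper. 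Since the union of these lattices is still $\langle e_1,e_2,f_0,f_1\rangle$, the recovery of the ambient lattice, and hence of the Kaiser stratum, goes through unchanged, so the argument is correct after this repair and coincides with the paper's.
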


\subsubsection{The Vollaard-Wedhorn case}\label{secVW}

In \cite{VW}, Vollaard and Wedhorn consider the supersingular locus of the Shimura variety for a group $G$ such that $G_{\mathbb{R}}\cong GU(1,n-1)$ is the unitary similitude group of signature $(1,n-1)$, and at a prime $p$ which is inert in the reflex field of the Shimura datum and such that $G$ is unramified at $p$.

Using Rapoport-Zink uniformization, the supersingular locus can be studied by considering the corresponding moduli space of $p$-divisible groups with PEL structure. Its underlying reduced subscheme is an affine Deligne-Lusztig variety for the group $G_{\Q_{p^2}}$ (note that $\Q_{p^2}$, the unramified extension of $\mathbb{Q}_p$ of degree 2, is the completion of the reflex field $E$ at the unique prime lying over $p$). It has the following explicit description, compare \cite{VW}, Section 1, where also a more conceptual definition and the relation to the moduli space can be found. Below we consider again both the function field case and the arithmetic case. The case of \cite{VW} corresponds to choosing $F=\Q_p$.

Let $K$ be an unramified extension of $F$ of degree 2, and $\sigma$ the nontrivial automorphism of $K$ over $F$. Let $\O_K$ be its ring of integers. Then we define a unitary Dieudonn\'e module as follows. Let $\delta\in \O_K^{\times}$ with $\sigma(\delta)=-\delta$. Let $\tilde {\mathbb{S}}=\O_K\otimes_{\O_F}\O_K=\O_K\oplus\O_K$ (decomposition with respect to the two embeddings into an algebraic closure of $F$) and let $g=(1,0)$ and $h=(0,1)\in \tilde {\mathbb{S}}$. We define the $\sigma$-linear Frobenius map $F$ on $\tilde {\mathbb{S}}$ by $F(g)=\e h$ and $F(h)=g$, and a pairing $\langle\cdot,\cdot\rangle$ by $\langle g,h\rangle=\delta$. Let $N_{(0,1)}=\tilde{\mathbb{S}}\otimes_{\O_K}K$ and let $N=N_{(0,1)}^{n-1}\oplus \sigma^*(N_{(0,1)})$ where $\sigma^*(N_{(0,1)})$ is the same as $N_{(0,1)}$ as symplectic $\O_K\otimes\O_K$-module, but with Frobenius map $F(g)=h$ and $F(h)=\e g$. Then $N$ is a supersingular symplectic isocrystal, and $M'=\tilde{\mathbb{S}}^{n-1}\oplus \sigma^*(\tilde{\mathbb{S}})$ is a unitary Dieudonn\'e module of signature $(1,n-1)$.Consider the decomposition of $N$ into $N_0\oplus N_1$, the subspaces generated by the basis vectors $g$ resp.~$h$ in each summand. The set of lattices $M\subseteq N\otimes\widehat{K^{nr}}$ with $\e M\subseteq F(M)\subseteq M$, self-dual up to a factor $p^i$, with $M=M_0\oplus M_1$ where $M_i=M\cap N_i$, and satisfying a signature condition (detailed below) can then be identified with the set of $\overline {\mathbb{F}}_p$-valued points of a corresponding moduli space of $p$-divisible groups, or with an affine Deligne-Lusztig set associated with a minuscule coweight $\mu$.

Define a $\sigma$-hermitian form on $N_0$ by $\{x,y\}=\delta\langle x,Fy\rangle$. Then Vollaard and Wedhorn show that the set of lattices $M$ as above can be described as follows: The summand $M_0$ determines $M=M_0\oplus M_1$ uniquely. For a lattice $M_0\subset N_0$ we define $M_0^{\vee}$ to be the dual of $M_0$ with respect to $\{\cdot,\cdot\}$. Then the set of lattices $M$ as above (i.e.~also satisfying the signature condition) is in bijection with the set of lattices $M_0$ in $N_0$ such that
\begin{equation}\label{eqbtvw}
\e^{i+1} M_0^{\vee}\stackrel{1}{\subset}M_0\stackrel{n-1}{\subset}\e^iM_0^{\vee}
\end{equation}
for some $i$. In the study of this moduli space, one may restrict one's attention to one of the isomorphic connected components, and therefore assume that $i=0$. Let $\tau=\e^{-1}F^2$. Vollaard and Wedhorn decompose the (component of the) moduli space according to the
invariant assigning to such a lattice $M_0$ the lattice $\Lambda=M_0+
\tau(M_0)+\dotsm+\tau^{d}M_0$ where $d$ is minimal such that this
lattice is  $\tau$-invariant. It is the smallest $\tau$-invariant lattice containing $M_0$ and satisfies (see \cite{Vollaard}, Lemma 2.2)
\begin{equation}\label{typelambda}
\e\Lambda^{\vee}\stackrel{2d+1}{\subset}\Lambda\stackrel{n-2d-1}{\subset}\Lambda^{\vee}.
\end{equation}

On the other hand, the explicit description of the group $J_b(F)$ for this
case (as given in \cite{VW}, 1.6) shows that our invariant, the
functions $f_g$, correspond to fixing the relative positions
$\inv(M_0,\Lambda')$ for all $\tau$-invariant lattices
$\Lambda'\subseteq N_0$ with $\e^i(\Lambda')^{\vee}\stackrel{1}{\subset}\Lambda'\stackrel{n-1}{\subset}\e^{i-1}(\Lambda')^{\vee}$ for some $i$.

We now show that these two invariants coincide.

\begin{thm}
Two lattices $M_0, M_0'$ with (\ref{eqbtvw}) have the same
Vollaard-Wedhorn invariant $(\Lambda,d)$ if and only if
$\inv(M_0,\Lambda')=\inv(M'_0,\Lambda')$ for all $\tau$-invariant
lattices $\Lambda'\subseteq N_0$ with $\e^i(\Lambda')^{\vee}\stackrel{1}{\subset}\Lambda'\stackrel{n-1}{\subset}\e^{i-1}(\Lambda')^{\vee}$ for some $i$.
\end{thm}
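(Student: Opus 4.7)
The plan is to reduce both invariants to equivalent geometric data on the $(2d{+}1)$-dimensional hermitian $\mathbb{F}_{q^2}$-vector space $\bar V := \Lambda/\e\Lambda^\vee$. By Vollaard-Wedhorn \cite{VW}, the assignment $M_0 \mapsto \bar M_0 := (M_0 + \e\Lambda^\vee)/\e\Lambda^\vee$ identifies the VW stratum for $(\Lambda, d)$ with an open subscheme of a classical Deligne-Lusztig variety inside a Grassmannian of subspaces of $\bar V$, cut out by the condition that $\bar M_0$ is not contained in any proper $\mathbb{F}_{q^2}$-rational subspace of $\bar V$. Both invariants will then be expressed in terms of the dimensions of the intersections of $\bar M_0$ with $\mathbb{F}_{q^2}$-rational subspaces of $\bar V$, and the genericity condition defining the DL variety will be the main technical tool.

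For the forward direction, fix $M_0$ and $M_0'$ in the same VW stratum $(\Lambda, d)$. For each $\tau$-invariant $\Lambda'$ in the family, I would compute the jumps $\dim_k\bigl((M_0 \cap \e^j\Lambda')/(M_0 \cap \e^{j+1}\Lambda')\bigr)$, which collectively determine $\inv(M_0, \Lambda')$. Using $\e\Lambda^\vee \subseteq M_0 \subseteq \Lambda$, and in parallel with the computations in the proof of Proposition~\ref{propkaiser} and the claim in Section~\ref{secclos}, each such jump can be rewritten as $\dim_k(\bar M_0 \cap \overline{W_j})$ for a certain $\mathbb{F}_{q^2}$-rational subspace $\overline{W_j} \subseteq \bar V$ depending only on the pair $(\Lambda, \Lambda')$ and the integer $j$. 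By the genericity condition defining the DL variety, this dimension depends only on $\dim_{\mathbb{F}_{q^2}} \overline{W_j}$, and not on $\bar M_0$. Hence $\inv(M_0, \Lambda')$ and $\inv(M_0', \Lambda')$ agree for every $\Lambda'$.

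For the reverse direction, I would recover $(\Lambda, d)$ from the family $(\inv(M_0, \Lambda'))_{\Lambda'}$. The relation $\e^j M_0 \subseteq \Lambda'$ is visible from the signs of the coordinates of $\inv(M_0, \Lambda')$, so the set $\mathcal T$ of all vertex lattices $\Lambda'$ that dominate $M_0$ in prescribed ways is reconstructible from the data. Since $\Lambda = M_0 + \tau(M_0) + \cdots + \tau^d(M_0)$ is the smallest $\tau$-invariant lattice containing $M_0$, I would identify $\Lambda$ inside the Bruhat-Tits building of $J_b(F)$ as the unique higher-type $\tau$-invariant lattice whose neighborhood of type-$1$ vertices equals exactly $\mathcal T$; the value of $d$ then follows from the type of $\Lambda$ via \eqref{typelambda}.

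The hard part will be the reverse direction: since $\Lambda$ has VW-type $2d+1$ and therefore does not itself lie in the family of $\Lambda'$ whenever $d > 0$, recovering it from the $\inv(M_0, \Lambda')$ requires a careful combinatorial argument inside the Bruhat-Tits building of $J_b(F)$, exploiting the fact that each $\tau$-invariant lattice of type $2d+1$ is uniquely determined by the configuration of its type-$1$ vertex neighbors. Once the reconstruction of $\Lambda$ is secured, the equivalence of the two stratifications follows from combining both directions.
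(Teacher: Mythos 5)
Your reduction of $\inv(M_0,\Lambda')$ to intersection dimensions inside $\overline{V}=\Lambda/\epsilon\Lambda^{\vee}$ is fine (a modular-law computation of the kind the paper carries out for $GSp_4$), but the forward direction then rests entirely on the sentence ``by the genericity condition defining the DL variety, this dimension depends only on $\dim_{\mathbb{F}_{q^2}}\overline{W}_j$ and not on $\overline{M}_0$'' --- and that is precisely the statement to be proved, not a consequence of genericity. Genericity only says that $\overline{M}_0$ lies in no proper rational subspace of $\overline{V}$; it gives no control over intersections with \emph{small} rational subspaces (it does not, by itself, even exclude that one point of the stratum contains a rational vector while another does not). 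What forces constancy is the interaction with the duality condition $\epsilon M_0^{\vee}\stackrel{1}{\subset}M_0$, and this needs a genuine argument. The paper supplies one by a different mechanism: each Vollaard--Wedhorn stratum is irreducible, $v(\tilde M_0,i)=\vol(\tilde M_0+\epsilon^i\Lambda')$ is semicontinuous along the stratum, and Vollaard's Lemma 2.2 (each step of the $\tau$-saturation chain of $\tilde M_0$ has colength one) yields the identity $v(\Lambda,i)=v(\tilde M_0,i)-n_{\tilde M_0,i}$ with $n_{\tilde M_0,i}\le d$, from which constancy on the stratum is deduced --- and this even for \emph{all} $\tau$-invariant lattices $\Lambda'$, not only those of type $(1;i)$. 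Without a substitute for this step your forward direction is an assertion, not a proof.

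For the reverse direction you correctly flag the hard point, but the key lemma is missing and your formulation would not suffice as stated. The given data is the set of type-$(1;i)$ vertex lattices containing $M_0$ (plus the finer relative positions), and what must be shown is that this determines the containments $M_0\subseteq\Lambda_j$ for vertex lattices of \emph{every} odd type $j$, hence $\Lambda=\Lambda(M_0)$ and $d$. The paper proves this by a decreasing induction on the type whose engine is a counting lemma in the finite hermitian space $\Lambda_m/\epsilon\Lambda_m^{\vee}$: the number $\Theta_m$ of type-$(m-2)$ neighbours of a type-$m$ vertex lattice exceeds the number $\Psi_m$ of hyperplanes through a fixed line, so the intersection of those neighbours is exactly $\epsilon\Lambda_m^{\vee}$; the corollary of this lets one test containment in $\Lambda_j$ via containment in lattices of type $j+2$ (and ultimately type $1$), and the exceptional case $j=n-1$ with $n$ even needs a separate argument with maximal totally isotropic subspaces. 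Your appeal to ``each $\tau$-invariant lattice of type $2d+1$ is uniquely determined by the configuration of its type-$1$ vertex neighbours'' is an unproven claim of the same flavour, and even granting it you would still have to show that the set of type-$1$ lattices containing $M_0$ pins down that neighbourhood. So both pillars of the theorem --- constancy of the relative positions on a stratum, and the building-theoretic reconstruction of $(\Lambda,d)$ from type-$1$ data --- are missing from the proposal.
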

\begin{proof}
We first assume that the Vollaard-Wedhorn invariant of $M_0$ and $M_0'$
is equal to the same pair $(\Lambda,d)$. We claim that then even
$\inv(M_0,\Lambda')=\inv(M'_0,\Lambda')$ for all $\tau$-invariant
lattices $\Lambda'\subseteq N_0$. Let $\Lambda'$ be such a lattice. We
consider for every $i\in\mathbb Z$ and all lattices $\tilde M_0$ in
$N_0$ the volume $v(\tilde M_0,i):=\vol(\tilde M_0+\e^i\Lambda')$.
Together, the $v(\tilde M_0,i)$ determine $\inv (\tilde M_0,\Lambda')$,
so it is enough to show that for fixed $i$ they are constant for $\tilde
M_0$ in the given stratum. Each stratum of \cite{VW} is irreducible. From the definition we obtain that in the
generic point of the stratum, $v(\cdot ,i)$ is less or equal to the value
in any other point of the stratum. On the other hand for $\tilde{M}_0$ in the
given stratum let $n_{\tilde{M}_0,i}\leq d$ be minimal such that
$\tilde{M}_0+\dotsm+\tau^{n_{\tilde{M}_0,i}}\tilde{M}_0+\e^i\Lambda'$ is $\tau$-invariant. By
\cite{Vollaard}, Lemma 2.2 and its proof, we have $\vol(\tilde{M}_0+\dotsm+\tau^l\tilde{M}_0)\geq \vol(\tilde{M}_0)-l$ with
equality for all $l\leq d$. Thus from the minimality of $n_{\tilde{M}_0,i}$ we
obtain that $$v(\Lambda,i)=v(\tilde{M}_0+\dotsm+\tau^{n_{\tilde{M}_0,i}}\tilde{M}_0+\e^i\Lambda',i)=
v(\tilde{M}_0,i)-n_{\tilde{M}_0,i}.$$ The left hand side of this equation is constant on
the given stratum. The constant $n_{\tilde{M}_0,i}$ decreases under
specialization, which completes the proof that $v(\tilde{M}_0,i)$ is also
constant on the given stratum.

Let us now show how to compute $(\Lambda,d)$ from the values
$\inv(M_0,\Lambda')$ for all $\tau$-invariant
lattices $\Lambda'\subseteq N_0$ with $\e^i(\Lambda')^{\vee}\stackrel{1}{\subset}\Lambda'\stackrel{n-1}{\subset}\e^{i-1}(\Lambda')^{\vee}.$ Inspired by (\ref{typelambda}) call a $\tau$-invariant lattice $\Lambda_j$ with $$\e^i(\Lambda_j)^{\vee}\stackrel{j}{\subset}\Lambda_j\stackrel{n-j}{\subset}\e^{i-1}(\Lambda_j)^{\vee}$$ for some $i$ and some odd $j$ a vertex lattice (of type $(j;i)$). Then computing the invariant $(\Lambda,d)$ is equivalent to determining for each vertex lattice $\Lambda_j$ of some type $(j;1)$ if $M_0\subseteq \Lambda_j$. From our invariant we already know if $M_0\subseteq \Lambda_1$ for vertex lattices of types $(1;i)$ for all $i$.

\underline{Case 1: $n$ is odd.}

We use decreasing induction on $j$ to show that our invariant determines if $M_0$ is contained in a given vertex lattice $\Lambda_j$ of type $(j;1)$.

For $j=n$, consider all $\tilde{\Lambda}_1$ of type $(1;-1)$ satisfying
\[\Lambda_n=\Lambda_n^{\vee}\subset \e^{-1}\tilde{\Lambda}_1^{\vee}\subset\tilde{\Lambda}_1\subset \e^{-1}\Lambda_n.\]
By the corollary below, the intersection of these $\tilde{\Lambda}_1$ is equal to $\Lambda_n$. In particular, $M_0\subset \Lambda_n$ if and only if $M_0\subset \tilde\Lambda_1$ for each of these $\tilde{\Lambda}_1,$ a condition that is determined by our invariant.

For the induction step suppose that we can already test whether $M\subset \Lambda_{j+2}$ with $\Lambda_{j+2}$ of type $(j+2;1)$. Now we want to test with $\Lambda_j$ of type $(j;1)$. We consider all $\tilde{\Lambda}_{j+2}$ of type $(j+2;1)$ such that
\[\Lambda_j\subset \tilde{\Lambda}_{j+2}\subset \tilde{\Lambda}_{j+2}^{\vee}\subset \Lambda_j^{\vee}. \]
By the lemma below, their intersection is $\Lambda_j$. By the  induction hypothesis, we know whether $M_0$ is contained in $\tilde{\Lambda}_{j+2}$, and hence also whether $M_0$ is contained in $\Lambda_j$.

\underline{Case 2: $n$ is even.}

Let $\Lambda_j$ be of type $(j;1)$ with $j<n-1$ and odd. Then $\Lambda_j^{\vee}$ is of type $(n-j;0)$. We consider all lattices $\Lambda_1$ of type $(1;0)$ with $$\Lambda_j\stackrel{(n-j-1)/2}{\subset}\Lambda_1^{\vee}\stackrel{1}{\subset}\Lambda_1\stackrel{(n-j-1)/2}{\subset}\Lambda_j^{\vee}.$$

By the corollary below (for $l=0$) the intersection of these is $\Lambda_j$. So our invariant determines if $M_0$ is contained in $\Lambda_j$ (for $j<n-1$).

It remains to test whether $M_0\subset \Lambda_{n-1}$ for vertex lattices $\Lambda_{n-1}$ of type $(n-1;1)$.

Claim: $M_0\subset \Lambda_{n-1}\Longleftrightarrow \e\Lambda^{\vee}_{n-1}\stackrel{\frac{n}{2}}{\subset} M_0\stackrel{\frac{n}{2}}{\subset} \Lambda_{n-1}^{\vee}$.

As $\Lambda_{n-1}^{\vee}$ is of type $(1;0)$, our invariant determines whether the right hand side is true or not. It remains to prove the claim. An easy calculation shows that the first condition implies the second. Now we show the other direction. Suppose the second condition holds. By duality this is equivalent to the condition
$$\e\Lambda_{n-1}\stackrel{\frac{n}{2}}{\subset} \e M_0^{\vee}\stackrel{\frac{n}{2}}{\subset} \Lambda_{n-1}.$$ The hermitian form $\{\cdot, \cdot\}$ on $\Lambda_{n-1}$ induces an hermitian form $$\{\cdot,\cdot\}: \Lambda_{n-1}/\e\Lambda_{n-1}\times \Lambda_{n-1}/\e\Lambda_{n-1}\rightarrow \mathbb{F}_{p^2}.$$ Moreover, we have an isomorphism of hermitian forms

\[\Lambda_{n-1}/\e\Lambda_{n-1}\simeq \Lambda_{n-1}/\e\Lambda_{n-1}^{\vee}\oplus \e\Lambda_{n-1}^{\vee}/\e\Lambda_{n-1}\]
where the hermitian form on $\Lambda_{n-1}/\e\Lambda_{n-1}^{\vee}$ is induced by $\{\cdot, \cdot\}$ which is non-degenerate and the hermitian form on $\e\Lambda_{n-1}^\vee/\e\Lambda_{n-1}$ is the zero form. The dimensions of the two summands are $n-1$ and 1. Consider the $\mathbb{F}_{p^2}$-subspace $\e M_0^{\vee}/\e\Lambda_{n-1}$ of $\Lambda_{n-1}/\e\Lambda_{n-1}$. It is totally isotropic of dimension $\frac{n}{2}$. Therefore $\e M_0^{\vee}/\e\Lambda_{n-1}\supset \e\Lambda_{n-1}^{\vee}/\e\Lambda_{n-1}$ (as the totally isotropic subspaces in $\Lambda_{n-1}/\e\Lambda_{n-1}^{\vee}$ are of dimension at most $\frac{n-2}{2}$). Hence $\e\Lambda_{n-1}^{\vee}\subset \e M_0^{\vee}$ and by duality we have $M_0\subset \Lambda_{n-1}$. This proves the claim.
\end{proof}
\begin{lemma} Let $3\leq m \leq n$ be an odd integer. For any $\Lambda_m$ of type $(m;1)$ consider all $\Lambda_{m-2}$ of type $(m-2;1)$ such that \[\e\Lambda_m^{\vee}\stackrel{1}{\subset}\e\Lambda_{m-2}^{\vee}\stackrel{m-2}{\subset}\Lambda_{m-2}\stackrel{1}{\subset}\Lambda_m.\] Then the intersection of these $\Lambda_{m-2}$ is $\e\Lambda_{m}^{\vee}$.
\end{lemma}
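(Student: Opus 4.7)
The plan is to pass to the finite-dimensional quotient $V=\Lambda_m/\e\Lambda_m^{\vee}$ and translate the statement into one about isotropic lines in a hermitian space. Since $\Lambda_m$ is a vertex lattice of type $(m;1)$, the relation $\e\Lambda_m^{\vee}\stackrel{m}{\subset}\Lambda_m$ makes $V$ an $m$-dimensional $\bar k$-vector space. From $\{\Lambda_m,\Lambda_m\}\subset\O_K$ and $\{\e\Lambda_m^{\vee},\Lambda_m\}\subset\e\O_K$ the hermitian form descends to a form on $V$, and a direct computation of its radical shows it is non-degenerate. The operator $\tau=\e^{-1}F^2$ preserves both $\Lambda_m$ and $\e\Lambda_m^{\vee}$, so it acts on $V$; by the Vollaard-Wedhorn theory of vertex lattices the fixed subspace $V^{\tau}$ is an $m$-dimensional non-degenerate hermitian space over $\mathbb{F}_{q^2}$ whose base change to $\bar k$ recovers $(V,\{\cdot,\cdot\})$.

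Next I would set up a bijection between the lattices $\Lambda_{m-2}$ in the statement and the $\tau$-stable isotropic lines in $V$. Given such a $\Lambda_{m-2}$, the subspace $\ell_{\Lambda_{m-2}}:=\e\Lambda_{m-2}^{\vee}/\e\Lambda_m^{\vee}\subset V$ is one-dimensional and $\tau$-stable. A direct computation with the descended form identifies its orthogonal $\ell_{\Lambda_{m-2}}^{\perp}$ in $V$ with $\Lambda_{m-2}/\e\Lambda_m^{\vee}$, and the inclusion $\e\Lambda_{m-2}^{\vee}\subset\Lambda_{m-2}$ then gives $\ell_{\Lambda_{m-2}}\subset\ell_{\Lambda_{m-2}}^{\perp}$, so $\ell_{\Lambda_{m-2}}$ is isotropic. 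Conversely, for a $\tau$-stable isotropic line $\ell\subset V$, the preimage of $\ell^{\perp}$ in $\Lambda_m$ is a $\tau$-invariant lattice whose chain of colengths and dual can be computed from the non-degeneracy of the form, producing a vertex lattice of type $(m-2;1)$ fitting into the chain of the statement.

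Under this dictionary,
\[\bigcap_{\Lambda_{m-2}}\Lambda_{m-2}/\e\Lambda_m^{\vee}\ =\ \bigcap_{\ell}\ell^{\perp}\ =\ \Bigl(\sum_{\ell}\ell\Bigr)^{\perp},\]
with $\ell$ ranging over $\tau$-stable isotropic lines in $V$. So the lemma reduces to the statement that such lines span $V$, equivalently that the isotropic vectors of $V^{\tau}$ span it over $\mathbb{F}_{q^2}$. Since $m\geq 3$ the hermitian form on $V^{\tau}$ has positive Witt index, and an elementary argument (use a hyperbolic plane, and then realise any remaining anisotropic vector as a sum of isotropic vectors via the surjectivity of the trace $\mathbb{F}_{q^2}\to\mathbb{F}_q$) shows that the isotropic vectors span $V^{\tau}$.

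The step I expect to be most delicate is the bookkeeping in the second paragraph: one has to juggle the four lattices $\e\Lambda_m^{\vee}\subset\e\Lambda_{m-2}^{\vee}\subset\Lambda_{m-2}\subset\Lambda_m$ together with their duals and the induced hermitian form on $V$ in order to verify both directions of the bijection (in particular, to check that the preimage construction actually yields a vertex lattice of type $(m-2;1)$ rather than something larger). Once the dictionary between such $\Lambda_{m-2}$ and $\tau$-stable isotropic lines is in place, the remaining reduction to a standard span-by-isotropic-vectors fact over $\mathbb{F}_{q^2}$ is immediate.
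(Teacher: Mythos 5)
Your argument is correct, and its first half is essentially the same reduction the paper makes: pass to the $m$-dimensional non-degenerate hermitian space $U=\Lambda_m/\e\Lambda_m^{\vee}$ over $\mathbb{F}_{p^2}$ and identify the lattices $\Lambda_{m-2}$ of the statement with the isotropic lines $\ell=\e\Lambda_{m-2}^{\vee}/\e\Lambda_m^{\vee}$ (the paper uses this dictionary implicitly when it counts the $\Lambda_{m-2}$ by the points of the hermitian hypersurface, so it too needs both directions of your ``delicate bookkeeping''; your verification does go through, the key points being $\Lambda_{m-2}/\e\Lambda_m^{\vee}=\ell^{\perp}$ and, for the converse, that the preimage of $\ell^{\perp}$ has $\e\Lambda^{\vee}/\e\Lambda_m^{\vee}=\ell^{\perp\perp}=\ell\subset\ell^{\perp}$, forcing type $(m-2;1)$). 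Where you genuinely diverge is the concluding step. The paper argues by counting: it computes the number $\Theta_m$ of isotropic lines via a recursion, compares it with the number $\Psi_m$ of hyperplanes through a fixed line, and concludes from $\Theta_m>\Psi_m$ that the hyperplanes $\Lambda_{m-2}/\e\Lambda_m^{\vee}$ have trivial common intersection. You instead use duality, $\bigcap_{\ell}\ell^{\perp}=\bigl(\sum_{\ell}\ell\bigr)^{\perp}$, and the elementary fact that a non-degenerate hermitian space of dimension at least $2$ over $\mathbb{F}_{p^2}$ is spanned by its isotropic vectors (hyperbolic plane plus surjectivity of the trace). Your route is more conceptual and avoids the explicit point count and the recursion; the paper's route only needs the crude inequality $\Theta_m>\Psi_m$ rather than the perp-duality identification, and as a by-product records the exact number $\Theta_m$ of such vertex lattices, which is of independent combinatorial interest in the Bruhat--Tits stratification. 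Both proofs are complete and correct.
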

\begin{proof} Let $U=\Lambda_m/\e\Lambda_{m}^{\vee}$, a $\mathbb{F}_{p^2}$-vector space of dimension $m$. The hermitian form $\{\cdot, \cdot\}$ induces a non-degenerate hermitian form on $U$. As all hermitian forms over $\mathbb{F}_{p^2}$ are isomorphic, after choosing a basis of $U$, we may assume that $U\simeq \mathbb{F}_{p^2}^{m}$ and that the hermitian form on $U$ is given by  $$\{(x_1,\dotsc, x_m), (y_1,\dotsc, y_m)\}= x_1y_1^p+\dotsc x_my_m^p.$$ Let $\Theta_m$ be the number of lattices $\Lambda_{m-2}$ as above. As they are determined by the (1-dimensional) quotient $\e\Lambda_{m-2}^{\vee}/\e\Lambda_m^{\vee}$, we have
$\Theta_m=|M_m|$ for $$M_m=\{[x_1:\dotsc: x_m]\in\mathbb{P}^{m-1}(\mathbb{F}_{p^2})| x_1^{p+1}+\cdots+x_m^{p+1}=0\}.$$ We want to compute this value, and to do so we consider it for all values of $m$, not only odd ones. Note that the map $\phi:\mathbb{F}_{p^2}\rightarrow \mathbb{F}_{p^2}$ with $x\mapsto x^{p+1}$ has image in $\mathbb{F}_p$, the inverse image of $0$ consists of $0$, and all other nonempty fibers consist of $p+1$ elements. Thus to determine $\Theta_m$ we decompose $M_m$ according to whether $x_m=0$ or not. In the first case we have that $[x_1:\dotsc: x_{m-1}]\in M_{m-1}$. In the second, the first $m-1$ coordinates can be chosen in such a way that $[x_1:\dotsc: x_{m-1}]\notin M_{m-1}$, and by our discussion of the fibers of $\phi$ there are then $p+1$ possible values for $x_m$. Altogether we obtain $$\Theta_m=\Theta_{m-1}+(|\mathbb{P}^{m-2}(\mathbb{F}_{p^2})|-\Theta_{m-1})(p+1)=\frac{(p+1)(p^{2m-2}-1)}{p^2-1}-p\Theta_{m-1}.$$
Further, $\Theta_2=p+1.$ An easy inductive argument then shows that
$$\Theta_m=\frac{(p^m-(-1)^m)(p^{m-1}+(-1)^{m})}{p^2-1}.$$

If the intersection of $\Lambda_{m-2}/\e\Lambda_m^{\vee}\subset U$ for all $\Lambda_{m-2}$ is not trivial, then all $\Lambda_{m-2}/\e\Lambda_m^{\vee}$ contain a common 1-dimensional sub-vector space in $U$. Consider the number $\Psi_m$ of all sub-$\mathbb{F}_{p^2}$-vector spaces in $U$ of dimension $m-1$ containing a fixed 1-dimensional sub-vector space. Then \[\Psi_m=|\mathbb{P}^{m-2}(\mathbb{F}_{p^2})|=\frac{p^{2m-2}-1}{p^2-1}=\frac{(p^{m-1}-1)(p^{m-1}+1)}{p^2-1}.\] As $\Theta_m>\Psi_m$, the lemma follows.
\end{proof}

\begin{kor}
Let $3\leq m \leq n$ be an odd integer and let $0<i<m/2$. For any $\Lambda_m$ of type $(m;l)$ with $l\in\{0,1\}$, consider all $\Lambda_{m-2i}$ of type $(m-2i;l)$ such that \[\e^l\Lambda_m^{\vee}\stackrel{i}{\subset}\e^l\Lambda_{m-2i}^{\vee}\stackrel{m-2i}{\subset}\Lambda_{m-2i}\stackrel{i}{\subset}\Lambda_m.\] Then the intersection of these $\Lambda_{m-2i}$ is $\e^l\Lambda_{m}^{\vee}$.
\end{kor}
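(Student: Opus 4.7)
The plan is to extend the hermitian-space argument of the preceding lemma uniformly in both $l \in \{0,1\}$ and $i$. I would set $U := \Lambda_m/\e^l\Lambda_m^\vee$, which is an $\mathbb{F}_{p^2}$-vector space of dimension $m$ (since $\Lambda_m$ and $\e^l\Lambda_m^\vee$ are $\tau$-invariant), endowed with a nondegenerate $\mathbb{F}_{p^2}$-valued hermitian form induced by $\{\cdot,\cdot\}$ exactly as in the lemma's proof (up to an overall factor of $\e^{-l}$ when $l=0$).

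First I would identify the lattices $\Lambda_{m-2i}$ in the statement with totally isotropic $i$-dimensional $\mathbb{F}_{p^2}$-subspaces of $U$. Given such a $\Lambda_{m-2i}$, the chain forces $V := \e^l\Lambda_{m-2i}^\vee/\e^l\Lambda_m^\vee$ to be an $i$-dimensional $\mathbb{F}_{p^2}$-subspace of $U$; the inclusion $\e^l\Lambda_{m-2i}^\vee \subset \Lambda_{m-2i}$ translates to $V \subset V^\perp$, and a dimension count yields $\Lambda_{m-2i}/\e^l\Lambda_m^\vee = V^\perp$. Conversely, the preimage in $\Lambda_m$ of $V^\perp$, for any totally isotropic $i$-dimensional $\mathbb{F}_{p^2}$-subspace $V \subset U$, is a $\tau$-invariant lattice $\Lambda_{m-2i}$ of type $(m-2i;l)$ fitting into the required chain. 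The main bookkeeping is verifying that the chain indices correspond correctly; this follows from dualizing the type condition on $\Lambda_m$ and using that the indices $i$ on both sides of the chain force $\Lambda_{m-2i} \subset^{n-m+2i}\e^{l-1}\Lambda_{m-2i}^\vee$, so $\Lambda_{m-2i}$ is indeed of type $(m-2i;l)$.

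Under this bijection, the intersection $\bigcap \Lambda_{m-2i}$ taken modulo $\e^l\Lambda_m^\vee$ becomes $\bigcap_V V^\perp = \bigl(\sum_V V\bigr)^\perp$, so the corollary reduces to the claim $\sum_V V = U$. Since $0 < i < m/2$, the Witt index $\lfloor m/2 \rfloor$ of the nondegenerate hermitian form on $U$ is at least $i$, so every isotropic vector of $U$ lies in some totally isotropic $i$-dimensional subspace, and it suffices to observe that the isotropic vectors span any nondegenerate hermitian $\mathbb{F}_{p^2}$-space of dimension at least $2$. This is standard: decompose $U$ as an orthogonal sum of hyperbolic planes plus at most one anisotropic line; each hyperbolic plane is spanned by its two isotropic basis vectors, and an anisotropic vector can be written as a difference of two isotropic vectors lying in the span of the anisotropic line and any adjacent hyperbolic plane (solving a surjective trace equation over $\mathbb{F}_{p^2}$). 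The hardest step is the bijection in the second paragraph; once that is in place, the rest is a short structural argument that replaces the counting computation of the preceding lemma.
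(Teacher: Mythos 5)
Your argument is correct, but it proves the statement by a genuinely different route than the paper. The paper's own proof is a two-line deduction: for $l=1$ it iterates the preceding lemma (the case $i=1$) by induction on $i$, using that a chain of index $i-1$ inside a fixed $\Lambda_{m-2}$ concatenates with the index-$1$ chain for $\Lambda_m$, and it handles $l=0$ by rescaling the hermitian form so that type $(m;0)$ becomes type $(m;1)$; the lemma itself rests on the count $\Theta_m>\Psi_m$ of isotropic lines versus hyperplanes through a fixed line. You instead bypass both the induction and the counting: you identify the admissible $\Lambda_{m-2i}$ with totally isotropic $i$-dimensional $\mathbb{F}_{p^2}$-subspaces $V$ of $U=\Lambda_m/\e^l\Lambda_m^\vee$ via $\Lambda_{m-2i}/\e^l\Lambda_m^\vee=V^{\perp}$ and $\e^l\Lambda_{m-2i}^\vee/\e^l\Lambda_m^\vee=V$, so that the assertion becomes $\sum_V V=U$, which you get from the standard facts that the Witt index $(m-1)/2$ is at least $i$ (so every isotropic vector lies in some totally isotropic $i$-plane) and that isotropic vectors span a nondegenerate hermitian $\mathbb{F}_{p^2}$-space of dimension at least $2$. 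Your lattice-to-subspace dictionary and the index bookkeeping check out, and the $\tau$-invariance issue is treated at the same (implicit descent) level of rigor as in the paper's lemma. What each approach buys: the paper's is essentially free once the lemma is proved, while yours is uniform in $i$ and $l$, avoids the induction, and replaces the point count by a structural spanning argument -- in effect it re-proves the lemma ($i=1$, $l=1$) rather than quoting it, so it could be used to streamline the lemma and corollary into one statement. One trivial normalization remark: the integral perfect form on $U$ is $\e^{1-l}\{\cdot,\cdot\}$, i.e.\ you should multiply by $\e$ (not by $\e^{-l}$, which is $1$) when $l=0$; this mirrors a sign/direction slip in the paper's own phrase ``apply the assertion for $l=1$ to the hermitian form $\e^{-1}\{\cdot,\cdot\}$'' and affects nothing in the argument.
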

\begin{proof}
For $l=1$, this follows from the lemma using induction on $i$. For $l=0$ apply the assertion for $l=1$ to the hermitian form $\e^{-1}\{\cdot,\cdot\}$.
\end{proof}



\section{Relation to the $a$-invariant}

An important invariant of a $p$-divisible group $X$ over $\bar k$ is the dimension of $\Hom_{\bar k}(\alpha_p,X)$, called the $a$-number $a(X)$. For example one can show that on a Rapoport-Zink moduli space of $p$-divisible groups without imposed endomorphisms, generically the $a$-number of the $p$-divisible groups is 1 unless we are in the degenerate case of ordinary $p$-divisible groups (see \cite{modpdiv}, \cite{polpdiv}, building on results of Oort on deformations of $p$-divisible groups).

Translated into our context of affine Deligne-Lusztig varieties this invariant yields the following. Consider the group $G=GL_h$. (Generalizations for example to $GSp_{2h}$ are possible, but are only induced by viewing these groups as subgroups of some $GL_h$, so are less interesting). Let $B$ be the Borel subgroup of upper triangular matrices and let $T$ be the diagonal torus. Let $\mu=(1,\dotsc, 1,0,\dotsc, 0)\in \Z^h_+\cong X_*(T)_{\dom}$ with multiplicities $m,h-m$. Let $[b]\in B(G,\mu)$. Then the geometric points of the associated affine Deligne-Lusztig variety correspond to lattices $M$ in the isocrystal $(L^h,b\sigma)$ satisfying $M\supseteq b\sigma(M)\supseteq \e M$. In other words, the lattices is supposed to be stable under $\Phi=b\sigma$ and under $V=\epsilon(b\sigma)^{-1}$. We call such lattices Dieudonn\'e lattices. The $a$-invariant of such a lattice is then defined as $\rg(M/(\Phi(M)+V(M)))$. Assigning to a $p$-divisible group its Dieudonn\'e module gives back the original $a$-number.

One can easily find examples proving that in general our invariant is not comparable to the $a$-invariant. However, for the most important stratum defined by the $a$-invariant (namely the stratum which is in all non-degenerate cases generic in the affine Deligne-Lusztig variety), there is the following comparison.

\begin{conj}\label{conjaf}
Let $(N,b\sigma)$ be an isocrystal. Then there is a function $f:J_b(F)\rightarrow X_*(T)_{\dom}$ such that a Dieudonn\'e lattice $M=gM_0$ satisfies $a(M)=1$ if and only if $f_g$ is in the $J_b(F)$-orbit of $f$.
\end{conj}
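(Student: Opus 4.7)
Since the introduction indicates that the basic case is already established, I would first reprove that case cleanly and then attempt the extension to general $b$. In the basic case, fix a $\sigma$-invariant straight representative $b$ of $[b]$ of slope $s/h$, so that $b \in J_b(F)$ and $J_b(F) = D^\times$ for a central simple $F$-algebra $D$ of dimension $h^2$ acting on the isoclinic isocrystal $(N,\Phi=b\sigma)$. Since $V = \e\Phi^{-1}$ is then defined on all of $N$, the condition $a(M)=1$ is equivalent to $M$ being cyclic as an $\O_L\{\Phi,\Phi^{-1}\}$-submodule of $N$, i.e.\ $M = \sum_{i\in\Z}\O_L\cdot \Phi^i x$ for a single $x\in M$. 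I would construct an explicit reference lattice $M_1 = g_1 M_0$ with $a(M_1)=1$, e.g.\ the cyclic semi-module lattice of Section \ref{secsemmod} generated by the obvious start index, and compute $f := f_{g_1}$ using Proposition \ref{propsuperb} and Remark \ref{group_semi_module}.

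For the ``only if'' direction in the basic case, suppose $M = gM_0$ has $a(M)=1$, with cyclic generator $x$. Since $M$ and $M_1$ both lie in $X_\mu(b)$ they share the same elementary-divisor type $\mu$, so the $L$-linear assignment $\Phi^i x_1 \mapsto \Phi^i x$ (where $x_1$ is a cyclic generator of $M_1$) commutes with $\Phi$ and extends to an element $j\in \mathrm{Aut}(N,\Phi) = J_b(F)$ with $jM_1 = M$; hence $f_g = j\cdot f_{g_1}$ lies in the $J_b(F)$-orbit of $f$. For the ``if'' direction I would argue that $a$ is constant on each $J_b(F)$-orbit of strata: by Example \ref{rmk_description_invariant_GLn}, $f_g$ determines all intersection volumes $\vol(gM_0 \cap jM_0)$ for $j\in J_b(F)$; since $\Phi M = b\sigma(g)M_0$ with $g^{-1}b\sigma(g)\in K\e^\mu K$, the position of $\Phi M$, and similarly of $VM$, relative to the $jM_0$ can be read off from these data, so $\dim_k M/(\Phi M + VM)$ is determined by $f_g$ together with $\mu$ and $b$, and is hence constant on the orbit.

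The general (non-basic) case is the main obstacle. There $(N,\Phi)$ has multiple slopes and a nontrivial slope filtration, and cyclic Dieudonn\'e lattices of fixed type $\mu$ no longer form a single $J_b(F)$-orbit, because extensions between slope pieces contribute genuine moduli. My plan is to invoke Oort's deformation theory, which shows that $a=1$ is open and dense in the non-degenerate part of $X_\mu(b)$, and to take as reference $M_1$ a ``generic'' $a=1$ lattice obtained by deforming a slope-split Dieudonn\'e module. Combining the basic case applied slope-wise with the finiteness statement in the corollary following Proposition \ref{propfin1} would reduce the claim to a finite check over the candidate $J_b(F)$-orbits of strata that can contain $a=1$ lattices, and one would then use irreducibility of the $a=1$ locus inside each Newton stratum to rule out all but one. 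The hardest step will be to show that the extension classes between slope pieces do not produce distinct $J_b(F)$-orbits of $f_g$, which is exactly what fails naively in the presence of multiple slopes and explains why the basic assumption was needed in the first place.
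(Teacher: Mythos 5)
Note first that the statement you are addressing is only a \emph{conjecture} in the paper; what is actually proved there is the isoclinic (basic) case, so your plan ``reprove the basic case, then extend'' has the right shape, but both halves have genuine gaps. In the basic case your ``only if'' direction proves far too much: you claim that any two $a=1$ lattices $M,M_1$ of the same type satisfy $jM_1=M$ for some $j\in J_b(F)$. This is false: for $G=GL_5$, $\mu=(1,1,0,0,0)$ and $b$ of slope $2/5$, the variety $X_\mu(b)$ is one-dimensional and the $a=1$ locus is open and dense in it, hence contains uncountably many lattices, while the $J_b(F)$-orbit of any single lattice is countable. The conjecture only asserts that the $a=1$ lattices lie in one $J_b(F)$-orbit of \emph{values of the invariant} $f_g$, not one orbit of points, and the paper never produces such a $j$; instead it shows (Proposition \ref{lemma_generic_multi-sm}, by induction on the number of simple summands and results of \cite{modpdiv}) that every suitably normalized $a=1$ lattice has the \emph{same} multi-semimodule $A_{\gen}$, which by Lemma \ref{remark_inv_multi-sm} pins down $f_g$. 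Concretely, your assignment $\Phi^i x_1\mapsto \Phi^i x$ is not well defined: the $L$-linear relations among the $\Phi^i x_1$ depend on $x_1$ (already in a simple isocrystal, $\Phi^h x=\e^m x$ holds only for special $x$), so it need not extend to an element of $J_b(F)$. Your criterion ``$a(M)=1$ iff $M=\sum_{i\in\Z}\O_L\Phi^i x$'' is also incorrect: stability under $\Phi$ and $\Phi^{-1}$ would force $\Phi M=M$; the correct statement is cyclicity over the ring generated by $\Phi$ and $V=\e\Phi^{-1}$ with nonnegative powers.

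Your ``if'' direction is likewise flawed: you assert that $f_g$ determines $\dim M/(\Phi M+VM)$, but the paper states explicitly that the invariant $f_g$ and the $a$-number are incomparable in general. The volumes $\vol(gM_0\cap jM_0)$ do not determine the position of $\Phi M=b\sigma(g)M_0$ relative to the $jM_0$ (this involves $\sigma(g)$, and the relevant elements $\sigma^{-1}(b^{-1}j)$ need not lie in $J_b(F)K$), and even those positions would not determine $\vol(\Phi M+VM)$, which depends on the position of $\Phi M$ and $VM$ relative to \emph{each other}. The paper's converse uses a different idea: after normalizing so that $M\subseteq M_0$ but $M\not\subseteq jM_0$ whenever $v_\e(\det j)>0$ (a normalization readable from $f_g$ via Lemma \ref{lemma_description_jLambda_0}), one takes, when $a(M)>1$, the sub-Dieudonn\'e-lattice $M'\subsetneq M$ generated by a generic vector; then $A(M')=A_{\gen}$ by Proposition \ref{lemma_generic_multi-sm}, whence $A(M)\neq A_{\gen}$ by Corollary \ref{koracontained}. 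Two further points: your basic-case setup silently assumes $N$ simple ($J_b(F)=D^\times$), whereas the isoclinic case allows $N$ to be a sum of $l$ simple summands with $J_b(F)=GL_l(D)$, and the combinatorics of $A_{\gen}$ for $l>1$ (Lemma \ref{lemma_Agen}) is the heart of the matter; and your non-basic step is, as you acknowledge, only a programme --- it is precisely the part that remains open in the paper as well, so it cannot be counted as a proof of the stated conjecture.
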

In other words, the locus where $a=1$ in the affine Deligne-Lusztig variety for $G=\GL_h$, given $b$ and minuscule $\mu$ is equal to one $J_b(F)$-orbit of strata for our invariant.

To illustrate this conjecture, and to prove an important special case, we need some notation.

Let $N=\oplus_{z=1}^{z_0}N_{\lambda_z}$ be the isotypic decomposition of $N$ with $N_{\lambda_z}$ isoclinic of slope $\lambda_z$ with $\lambda_z<\lambda_{z'}$ for $z<z'$. Let $N_{\lambda_z}=\oplus_{i=1}^{l_z}N_{z, i}$ be a decomposition into simple isocrystals. Let $\lambda_z=m_z/h_z$ for all $1\leq z\leq z_0$ with $m_z$ and $h_z$ coprime natural numbers. Let $n_z=h_z-m_z$. For any $z$, let $\pi_z:=\e^{a_z}(b\sigma)^{a'_z}\in D_{\lambda_z}=\mathrm{Aut}(N_{z,i})$ where $D_{\lambda_z}$ is the division algebra of invariant $\lambda_z$ over $F$ and $a_z\in\Z$ is minimal positive such that there is an $a_z'\in\Z$ with $a'_zh_z+a_zm_z=1$. Then $\pi_z$ is a uniformizer of $D_{\lambda_z}$ and $J_b(F)=\prod_{z=1}^{z_0}GL_{l_z}(D_{\lambda_z})$ (see \cite{deJongOort}). Choose $e_{zi0}\in N_{z,i}^{(b\sigma)^{h_z}=\e^{m_z}}\backslash\{0\}$ and let $e_{zil}:=\pi_z^l e_{zi0}$ for all $l\in\Z$. Note that then $e_{z,i,l+h_z}=\e e_{zil}$ for all $z,i,l$. Let $M_0\subseteq N$ be the lattice generated by the $e_{zil}$ for all $z,i, l$ with $l\geq 0$.

\begin{definition}A {\it multi-semimodule} of type $N$ is a subset $A$ of $(\Z\cup\{\infty\})^{\sum_{z}l_z}$ such that the following conditions are satisfied:
\begin{enumerate}
           \item There is an $N_1\in\Z$ such that $(\gamma_{zi})\in A$ implies that $\gamma_{zi}\geq N_1$ for all $z,i$. Further there is an $N_2\in \Z$ such that every $(\gamma_{zi})\in (\Z\cup\{\infty\})^{\sum_{z}l_z}$ with $\gamma_{zi}\geq N_2$ for all $z,i$, but not all equal to $\infty$, is contained in $A$.
           \item  If $\underline{\gamma}=(\gamma_{zi})\in A$,  then not all the cordinates $\gamma_{zi}$ are  $\infty$ and for all $a,b\in\mathbb{N}$, $\underline{\gamma}+a\underline{m}+b\underline{n}\in A$ where $\underline{m}=(m_z)_{z,i}$, $\underline{n}=(n_z)_{z,i}$ and where addition is componentwise.
            \item  If $(\gamma_{zi}),(\gamma'_{zi})\in A$, then $$\min((\gamma_{zi}),(\gamma'_{zi})):=(\min\{{\gamma}_{zi},\gamma'_{zi}\})_{z,i}\in A.$$ Here we define $\min(\infty, a)=a$ and $\infty+a=\infty$ for all $a\in \Z\cup\{\infty\}$.
           \end{enumerate}
 \end{definition}

\begin{remark}If $N$ is a simple isocrystal, then a multi-semimodule of type $N$ is a semimodule in the sense of de Jong and Oort \cite{deJongOort}.
\end{remark}

For any non-zero vector $v\in N$ we can write $v$ as an infinite converging sum $v=\sum_{z,i,l}[a_{zil}]e_{zil}$ with $[a_{zil}]$ the representatives in $\O_L$ of elements $a_{zil}\in\overline k$. Let $$\underline{\gamma}(v):=(\gamma_{zi})_{z,i}\in(\Z\cup\{\infty\})^{\sum_{z}l_z}$$ with
\[\gamma_{zi}:=\begin{cases} \infty, &\text{if } a_{zil}=0 \text{ for all } l\\ \min\{l| a_{zil}\neq 0\}, &\text{otherwise}.\end{cases}\] For a Dieudonn\'e lattice $M\subseteq N$, let
\[A(M):=\{\underline{\gamma}(v)|v\in M, v\neq 0\}.\]
Then it is easy to check that $A(M)$ is a multi-semimodule of type $N$.

Note that the multi-semimodule is not a canonical invariant of lattices, as it depends on the decomposition of $N$ into simple isocrystals (and the choice of a basis in each summand, which is only unique up to shifts of the indices). The following proposition compares this to our invariant (but is not used in the sequel except for the corollary below).

\begin{lemma}\label{remark_inv_multi-sm} For Dieudonn\'e lattices $M=gM_0, M'=g'M_0\subseteq N$, we have $f_g=f_{g'}$ if and only if
$A(M)=A(M')$ for all decompositions of $N$ into simple isocrystals, and choices of bases $e_{zil}$ as above. Equivalently, it is enough to choose for every decomposition one fixed such basis, and to know the corresponding multi-semimodules for these.
\end{lemma}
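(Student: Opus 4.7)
The plan is to translate both conditions into statements about intersection volumes $\vol(M\cap jM_0)$ for $j\in J_b(F)$, then invoke Example~\ref{rmk_description_invariant_GLn}. Write $M=gM_0$ and $M'=g'M_0$.

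By Example~\ref{rmk_description_invariant_GLn} (for $G=\GL_h$), $f_g=f_{g'}$ holds if and only if $\vol(M\cap jM_0)=\vol(M'\cap jM_0)$ for every $j\in J_b(F)$. So it suffices to show that the data of $A(M)$ over all decompositions and bases is equivalent to this family of intersection volumes.

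Fix one decomposition and one basis $\{e_{zil}\}$. For $\underline\gamma\in\mathbb{Z}^{\sum_z l_z}$, the lattice $M_0^{\underline\gamma}:=\bigoplus_{z,i}\mathcal{O}_L\langle e_{zil}\mid l\geq\gamma_{zi}\rangle$ equals $j_{\underline\gamma}M_0$ for the diagonal element $j_{\underline\gamma}=(\pi_z^{\gamma_{zi}})\in J_b(F)$, and directly from the definition $M\cap M_0^{\underline\gamma}=\{v\in M\mid\underline\gamma(v)\geq\underline\gamma\text{ componentwise}\}$. A standard combinatorial argument then shows that $A(M)$, for the fixed basis, is equivalent data to the function $\underline\gamma\mapsto\vol(M\cap M_0^{\underline\gamma})$: the jumps of this volume function along each coordinate direction record precisely which start indices are realized, while conversely a depth-by-depth generator count recovers the volumes from the semimodule. (Entries of $A(M)$ with some $\gamma_{zi}=\infty$ arise as limits and contribute no additional information.)

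Varying the decomposition and basis corresponds to the action of $J_b(F)=\prod_z\GL_{l_z}(D_{\lambda_z})$, which (since elements of $J_b(F)$ commute with $b\sigma$) preserves admissibility and acts transitively. If a new basis arises from $j_0\in J_b(F)$, then the new start index of $v$ equals the old start index of $j_0^{-1}v$, so the multi-semimodule of $M$ for the new basis equals that of $j_0^{-1}M$ for the old basis; by the previous paragraph this encodes the values $\vol(j_0^{-1}M\cap M_0^{\underline\gamma})$ for all $\underline\gamma$, equivalently $\vol(M\cap(j_0 j_{\underline\gamma})M_0)$ up to an additive constant depending only on $j_0$ (which cancels when comparing $M$ and $M'$). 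As $j_0$ ranges over $J_b(F)$ and $\underline\gamma$ over $\mathbb{Z}^{\sum_z l_z}$, the products $j_0 j_{\underline\gamma}$ exhaust $J_b(F)$. Combined with the first step, this gives the desired equivalence.

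For the last clause, within a fixed decomposition the only remaining freedom is $F^{\times}$-rescaling of each $e_{zi0}$; since $\e$ acts on $N_{z,i}$ as $\pi_z^{h_z}$, such rescaling merely shifts each coordinate of the semimodule by a multiple of $h_z$ and hence carries no new information beyond the semimodule for the fixed basis. The main technical step, and the only real obstacle, is the equivalence between $A(M)$ and the volume function for a fixed basis; the rest is bookkeeping with the $J_b(F)$-orbit of $M_0$.
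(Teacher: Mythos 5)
Your main argument follows the same route as the paper's proof: Example \ref{rmk_description_invariant_GLn} turns $f_g=f_{g'}$ into equality of the volumes $\vol(M\cap jM_0)$ for all $j\in J_b(F)$; the dictionary, for one fixed decomposition and basis, between $A(M)$ and the volume function $\underline{u}\mapsto\vol(M\cap\pi^{\underline{u}}M_0)$ is exactly the claim the paper asserts in its first sentence; and transporting the basis by $j_0\in J_b(F)$ so that the products $j_0 j_{\underline{\gamma}}$ sweep out $J_b(F)$ is the same mechanism the paper uses when it computes $f_g(j)$ from the multi-semimodule with respect to the basis obtained by applying $\e^i j$. Your jump criterion correctly recovers the finite entries of $A(M)$ from the volumes; the entries with $\infty$-coordinates do require a short limiting/subtraction argument rather than being dismissed as "no additional information", but the paper is no more explicit on this point, so I regard that part as acceptable.

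The genuine gap is in your final paragraph, i.e.\ in the ``Equivalently'' clause. It is false that within a fixed decomposition the only remaining freedom is $F^{\times}$-rescaling of each $e_{zi0}$: the admissible start vectors range over all nonzero elements of $N_{z,i}^{(b\sigma)^{h_z}=\e^{m_z}}$, which is an $h_z$-dimensional vector space over the unramified extension of $F$ of degree $h_z$, so for $h_z>1$ the group of base changes in each simple summand is all of $D_{\lambda_z}^{\times}$, not $F^{\times}$; in particular the induced shifts of the semimodule are by arbitrary integers, not only by multiples of $h_z$. The clause is still true, but the argument it needs is the one the paper's last step supplies: the transition between two admissible bases of the same decomposition lies in $\prod_{z,i}D_{\lambda_z}^{\times}$, and writing each component as a unit of $\mathcal{O}_{D_{\lambda_z}}$ times a power of $\pi_z$, the unit stabilizes each of the lattices spanned by the $e_{zil'}$ with $l'\geq l$ and hence preserves all start indices, so the multi-semimodule with respect to the new basis is only a coordinatewise integer translate of the old one. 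Without this (or an equivalent) observation you also cannot pass from agreement of the semimodules for one fixed basis per decomposition to agreement of the volumes $\vol(M\cap jM_0)$ for every $j$, since the element $j_0$ realizing a given decomposition need not be the one producing the chosen fixed basis. Supply this unit-times-uniformizer argument and your proof is complete and essentially identical to the paper's.
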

For $\underline{u}=(u_{zi})_{z,i}\in\Z^{\sum_z l_z}$, let $$\pi^{\underline{u}}:=\mathrm{diag}((\pi_z^{u_{zi}})_{zi})\in J_b(F).$$
\begin{proof}
For some fixed decomposition, associated basis and the associated multi-semi\-modules, we have $A(M)=A(M')$ if and only if $\vol (M\cap \pi^{\underline{u}}M_0)=\vol (M'\cap \pi^{\underline{u}}M_0)$ for all $\underline{u}\in\Z^{\sum_z l_z}.$ In particular, by Example ~\ref{rmk_description_invariant_GLn}, if $f_g=f_{g'}$ for $g, g'\in X^G_{\mu}(b)$, then $A(gM_0)=A(g'M_0)$. For the other direction recall that in order to compute $f_g(j)$, it is enough to compute $\vol(j^{-1}gM_0\cap \e^iM_0)$ for all $i$. We have $\vol(j^{-1}gM_0\cap \e^iM_0)=\vol(gM_0\cap \e^ijM_0)-v_{\e}(\det j)$. The last expression can be computed (using the above) from the multi-semimodule of $gM_0$ with respect to the decomposition of $N$ and the basis that is obtained from $\{e_{zi}\mid 1\leq z\leq z_0, 1\leq i\leq l_z \}$ by applying $\e^ij$.

Finally, we want to show the multi-semimodule of a lattice $M$ obtained by keeping the same decomposition of $N$ but changing the basis can be computed from the original one. Such a base change amounts to a base change by an element of $J_b(F)$ fixing each summand in the decomposition of $N$ into simple isocrystals, and is thus given by an element of the form $\pi^{\underline{u}}$ for some $\underline{u}$. By what we saw in the first part of the proof, this implies that each of the multi-semimodules of $M$ then determines the other.
\end{proof}

We assume again that all multi-semimodules are computed with respect to some chosen decomposition and basis of $N$.
\begin{cor}\label{koracontained}
Let $M\subseteq M'$ be lattices with $A(M)=A(M')$. Then $M=M'$.
\end{cor}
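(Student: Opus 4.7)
The plan is to deduce the corollary directly from the equivalence established inside the proof of Lemma~\ref{remark_inv_multi-sm}, namely that $A(M)=A(M')$ is equivalent to
\[\vol(M\cap \pi^{\underline u}M_0)=\vol(M'\cap \pi^{\underline u}M_0)\text{ for all }\underline u \in \Z^{\sum_z l_z},\]
where $\vol$ is the relative volume of Example~\ref{rmk_description_invariant_GLn}. Only the forward direction of this equivalence is needed.

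The first step is to choose $\underline u$ with each coordinate $u_{z,i}$ sufficiently negative. Since $\pi^{\underline u}M_0$ is the lattice generated by $\{e_{z,i,l}\mid l\geq u_{z,i}\}$, and this family is cofinal in the set of $\O_L$-lattices of $N$ as the $u_{z,i}$ tend to $-\infty$, one can arrange $\pi^{\underline u}M_0\supseteq M'$. Because $M\subseteq M'$, the same lattice contains $M$, so both intersections collapse:
\[M\cap \pi^{\underline u}M_0=M,\qquad M'\cap \pi^{\underline u}M_0=M'.\]
The equivalence of Lemma~\ref{remark_inv_multi-sm} applied to this particular $\underline u$ therefore gives $\vol(M)=\vol(M')$.

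The last step is the general fact that for lattices $M\subseteq M'$ inside the fixed ambient $N$ one has $\vol(M)-\vol(M')=\lg_{\O_L}(M'/M)\geq 0$, with equality if and only if $M'/M=0$. Combined with $\vol(M)=\vol(M')$, this forces $M=M'$. All the real content is packaged into Lemma~\ref{remark_inv_multi-sm}; once that equivalence is available the corollary is essentially immediate, and I foresee no serious obstacle beyond simply picking $\underline u$ negative enough to dominate $M'$.
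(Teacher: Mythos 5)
Your proposal is correct and follows essentially the same route as the paper: the paper likewise extracts from the proof of Lemma \ref{remark_inv_multi-sm} that $A(M)=A(M')$ forces $\vol(M\cap \e^iM_0)=\vol(M'\cap \e^iM_0)$ for all $i$, takes $i$ (your $\underline u$) sufficiently negative to get $\vol(M)=\vol(M')$, and concludes from $M\subseteq M'$. Your sign convention and the final length argument are consistent with the paper's definition of $\vol$, so there is nothing to add.
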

\begin{proof}
This follows as $A(M)=A(M')$ implies in particular that $\vol (M\cap \e^iM_0)=\vol (M'\cap \e^iM_0)$ for all $i\in\Z$, and hence that $\vol~ M=\vol~M'$.
\end{proof}

We now use these general reformulations that allow to express our invariants in terms of multi-semimodules to prove Conjecture \ref{conjaf} for the case that the isocrystal $N$ is isoclinic. Note that for several reasons this case is a particularly interesting one. In the context of Shimura varieties, isoclinic isocrystals correspond to the unique closed (i.e.~basic) Newton strata. These are the only ones where the Newton stratum is in direct relation to the corresponding Rapoport-Zink moduli space of $p$-divisible groups, and also the Newton strata which is conjecturally the most interesting from a representation-theoretic point of view. For the rest of this section we assume that we are in this case, i.e.~that $z_0=1$. To ease the notation we will from now on leave out all indices $z=1$ and write $l=l_1, m,n, h$ etc.

\begin{definition} Let $S_l$ be the symmetric group of $l$ elements.  Let  $A_{\gen}$ be the multi-semimodule for $N$ generated by $\underline{0}=(0,\dotsc, 0)$ and all elements of the form $$\tau(\underbrace{\infty, \dotsc,\infty}_i, imn, \dotsc, imn)$$ for $1\leq i\leq l-1, \tau\in S_{l}.$ In other words, $A_{\gen}$ is defined as the (unique!) smallest multi-semimodule containing all of these elements.
\end{definition}

\begin{lemma}\label{lemma_Agen} For $\gamma=(\gamma_1,\dotsc, \gamma_l)\in (\Z\cup\{\infty\})^{l}\backslash\{\underline{\infty}\}$, let \[c_i(\gamma):=\sharp\{j|1\leq j\leq l, \gamma_j>\gamma_i\},\text{ for all }1\leq i\leq l.\] Then $\gamma\in A_{\gen}$ if and only if
\begin{eqnarray}\label{eqn_ci_lambda}\gamma_i\in (c_i(\gamma)mn+m\mathbb{N}+n\mathbb{N})\cup\{\infty\}, \text{ for all } 1\leq i\leq l.\end{eqnarray}
\end{lemma}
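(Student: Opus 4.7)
My plan is to establish the two inclusions between $A_{\gen}$ and the set $S$ of tuples in $(\mathbb{Z} \cup \{\infty\})^l \setminus \{\underline\infty\}$ satisfying condition (\ref{eqn_ci_lambda}). Since $A_{\gen}$ is the smallest multi-semimodule containing the listed generators, the inclusion $A_{\gen} \subseteq S$ reduces to verifying that $S$ contains those generators and is closed under componentwise minimum and under translation by $a\underline{m} + b\underline{n}$ with $a, b \in \mathbb{N}$; the reverse inclusion $S \subseteq A_{\gen}$ I would obtain by writing each $\gamma \in S$ explicitly as such a minimum of shifted permuted generators.

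For the forward direction, the generators are easy: $\underline{0}$ has all $c_i = 0$ and $0 \in m\mathbb{N} + n\mathbb{N}$, while $\tau(\infty^i, (imn)^{l-i})$ has $c_j = i$ at each finite coordinate with value $imn \in imn + m\mathbb{N} + n\mathbb{N}$. Translation by $a\underline{m} + b\underline{n}$ preserves the relative ordering of coordinates (so each $c_i$-value is unchanged) and preserves the congruence condition trivially. The only nontrivial point is closure under componentwise minimum $\eta := \min(\gamma, \gamma')$: if $\eta_i = \gamma_i$ is finite, then any $j$ with $\eta_j > \gamma_i$ satisfies both $\gamma_j > \gamma_i$ and $\gamma'_j > \gamma_i$, whence $c_i(\eta) \le c_i(\gamma)$. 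Since $(c_i(\gamma) - c_i(\eta))mn \in m\mathbb{N}$, one has the inclusion $c_i(\gamma)mn + m\mathbb{N} + n\mathbb{N} \subseteq c_i(\eta)mn + m\mathbb{N} + n\mathbb{N}$, transferring the condition on $\gamma$ at index $i$ to the condition on $\eta$.

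For the reverse direction, given $\gamma \in S$ with nonempty finite support $I := \{i : \gamma_i < \infty\}$, I define for each $i \in I$ an auxiliary element $\beta^{(i)}$ by setting $\beta^{(i)}_j := \gamma_i$ when $\gamma_j \le \gamma_i$ and $\beta^{(i)}_j := \infty$ otherwise. Then $\beta^{(i)}$ has exactly $c_i(\gamma)$ infinite entries and $l - c_i(\gamma)$ finite entries equal to $\gamma_i$. Writing $\gamma_i = c_i(\gamma) mn + a_i m + b_i n$ via (\ref{eqn_ci_lambda}), $\beta^{(i)}$ is a shift of $\underline{0}$ by $a_i\underline{m} + b_i\underline{n}$ when $c_i(\gamma) = 0$, and otherwise a permutation of $\tau(\infty^{c_i(\gamma)}, (c_i(\gamma)mn)^{l-c_i(\gamma)})$ shifted by $a_i\underline{m} + b_i\underline{n}$, using the a priori bound $0 \le c_i(\gamma) \le l - 1$. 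A direct check then yields $\gamma = \min_{i \in I} \beta^{(i)}$: at $j \in I$ the minimum is attained by $\beta^{(j)}_j = \gamma_j$ while every $\beta^{(i)}_j \ge \gamma_j$, and at $j \notin I$ every $\beta^{(i)}_j = \infty$. The only subtle point is the behavior under ties among the $\gamma_i$, but defining $c_i$ as the count of \emph{strictly} greater entries is precisely what makes both the forward closure argument and this explicit decomposition go through without case analysis on the tie pattern.
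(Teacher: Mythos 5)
Your proof is correct and follows essentially the same route as the paper: one direction by checking that the set defined by \eqref{eqn_ci_lambda} contains the generators and is closed under the shift and componentwise-minimum operations, the other by writing each such $\gamma$ explicitly as a minimum of shifted permuted generators. Your elements $\beta^{(i)}$ (one per finite coordinate) coincide, up to harmless repetition at ties, with the paper's $\lambda_s$ obtained by sorting the coordinates and grouping equal values, so the decomposition is the same.
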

\begin{proof}Let $A'_{\gen}$ be the set of elements $\gamma=(\gamma_1,\dotsc, \gamma_l)\in (\Z\cup\{\infty\})^{l}\backslash\{\underline{\infty}\}$ such that $\gamma$ satisfies (\ref{eqn_ci_lambda}). We first show that $A'_{\gen}$ is a multi-semimodule of type $N$. For this, it suffices to show that  $\min(\gamma, \gamma')\in A'_{\gen}$ for all $\gamma, \gamma'\in A'_{\gen}$ as the other conditions are obviously satisfied. Suppose $\gamma=(\gamma_1,\dotsc, \gamma_l)$, $\gamma'=(\gamma'_1,\dotsc, \gamma'_l)$, and $\gamma''=\min(\gamma, \gamma')=(\gamma''_1,\dotsc, \gamma''_l)$. Then $\gamma''_i=\min(\gamma_i, \gamma'_i)$ for $1\leq i\leq l$. Suppose $\gamma''_i=\gamma_i$ (the other case being analogous). By definition
$$c_i(\gamma'')=\sharp\{j|1\leq j\leq l, \min(\gamma_j, \gamma'_j)>\min(\gamma_i, \gamma'_i)\}\leq c_i(\gamma).$$ Then $$\gamma''_i=\gamma_i\in (c_i(\gamma)mn+m\mathbb{N}+n\mathbb{N})\cup \{\infty\}\subseteq (c_i(\gamma'')mn+m\mathbb{N}+n\mathbb{N})\cup\{\infty\}.$$ Hence $\gamma''\in A'_{\gen}$ and $A'_{\gen}$ is a multi-semimodule of type $N$. As all generators of $A_{\gen}$ are contained in $A'_{\gen}$, this implies that $A_{\gen}\subseteq A'_{\gen}$.

Let now $\gamma=(\gamma_1,\dotsc, \gamma_l) \in A'_{\gen}$. Up to permutation, we may assume that the sequence $(\gamma_i)_i$ is decreasing. More precisely, suppose
$$\gamma_1=\cdots=\gamma_{i_1}>\gamma_{i_1+1}=\cdots=\gamma_{i_2}>\cdots>\gamma_{i_{s_0-1}+1}=\cdots=\gamma_{i_{s_0}}.$$  For $1\leq s\leq s_0$, by definition $\gamma_{i_s}\in (i_{s-1}mn+m\mathbb{N}+n\mathbb{N})\cup\{\infty\}$ where $i_0=0$.  Let $$\lambda_s=(\underbrace{\infty, \dotsc, \infty}_{i_{s-1}}, \underbrace{\gamma_{i_s},\dotsc, \gamma_{i_s}}_{l-i_{s-1}})$$ for all $1\leq s\leq s_0$. Then $\lambda_s\in A_{\gen}$ and $\gamma=\min(\lambda_1,\lambda_2, \dotsc,\lambda_{s_0})\in A_{\gen}$. Therefore $A_{\gen}=A'_{\gen}$.
\end{proof}

\begin{prop}\label{lemma_generic_multi-sm} As before assume that $N$ is isoclinic. Suppose that $M\subseteq M_0$ is a Dieudonn\'e lattice with $a(M)=1$ and $M\nsubseteq jM_0$ for all $j\in J(F)$ with $v_\e(det j)>0$. Then $A(M)=A_{\gen}$.
\end{prop}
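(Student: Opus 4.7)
The plan is to combine the cyclicity given by $a(M)=1$ with the normalization hypothesis to pin down a cyclic generator $v_0 \in M$, and then read off $A(M)$ combinatorially by analyzing $\mathcal{O}_L$-linear combinations of the $\Phi^aV^bv_0$.

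First, the identity $a(M)=\dim_{\bar k} M/(\Phi M+VM)=1$ combined with a Nakayama-type argument produces a cyclic generator $v_0\in M$ of $M$ as a module over the Dieudonn\'e algebra $\mathcal{O}_L\langle \Phi,V\rangle/(\Phi V-\epsilon)$. Every $v\in M$ then admits a convergent expansion $v=\sum_{a,b\geq 0}[\lambda_{ab}]\Phi^aV^bv_0$. Next, I apply the normalization $M\not\subseteq jM_0$ for $j\in J(F)$ with $v_\epsilon(\det j)>0$, specialized to the diagonal elements $j=\pi^{\underline u}$ for $\underline u\in\Z^l$. The containment $M\subseteq \pi^{\underline u}M_0$ is equivalent to $\delta_i:=\min_{v\in M}\gamma_i(v)\geq u_i$ for every $i$, while $v_\epsilon(\det\pi^{\underline u})=\tfrac{1}{h}\sum u_i$. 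Combined with $\delta_i\geq 0$ (from $M\subseteq M_0$), the hypothesis forces $\delta_i=0$ for all $i$; since $A(M)$ is closed under componentwise minima, $(0,\dotsc,0)\in A(M)$, and I choose $v_0$ realizing this, so that each leading coefficient $c_{i,0}$ of $v_0$ is non-zero.

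To prove $A_{\gen}\subseteq A(M)$ I realize each generator explicitly. The generator $(0,\dotsc,0)$ is $\underline\gamma(v_0)$. For a generator of the shape $\tau(\underbrace{\infty,\dotsc,\infty}_{i},imn,\dotsc,imn)$, I use that the equation $am+bn=imn$ with $a,b\geq 0$ has precisely the $i+1$ solutions $(a,b)=((i-j)n,jm)$, $0\leq j\leq i$. Forming $w=\sum_{j=0}^{i}[\lambda_j]\Phi^{(i-j)n}V^{jm}v_0$, the leading contributions at $l$-index $imn$ in a chosen set of $i$ simple summands give an $i\times(i+1)$ twisted Vandermonde system whose entries are $\sigma^{in-jh}(c_{r,0})$; the matrix has rank $i$, so there is a non-trivial solution killing those $i$ components at level $imn$. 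Residual higher-order contributions in these components are cancelled iteratively by $\mathcal{O}_L$-multiples of $\Phi^aV^bv_0$ with $am+bn>imn$, and the process converges $\epsilon$-adically.

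For the reverse inclusion $A(M)\subseteq A_{\gen}$, take any $v\in M$ with $\underline\gamma(v)=\gamma$ and $\gamma_i<\infty$, and set $c=c_i(\gamma)$. By Lemma~\ref{lemma_Agen} I must show $\gamma_i\in c\cdot mn+m\mathbb{N}+n\mathbb{N}$. In the expansion $v=\sum[\lambda_{ab}]\Phi^aV^bv_0$, the starts in each component are attained only at $l$-indices of the form $am+bn$; forcing $c$ components to strictly exceed $\gamma_i$ requires $c$ independent cancellation conditions at each $l$-index $\leq\gamma_i$. A counting argument on the solutions of $am+bn=l'$—using that the number of such solutions is bounded by $\lfloor l'/mn\rfloor+1$—shows that such cancellations are achievable only when $\gamma_i\geq c\cdot mn$, and a descent to the underlying additive structure of $m\mathbb{N}+n\mathbb{N}$ yields the required representation of $\gamma_i$.

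The main obstacle is the combinatorial core of the last two steps: the rank-$i$ assertion for the twisted Vandermonde matrix in the construction, and the pigeon-hole bound in the upper estimate, both implicitly require mild algebraic independence of the $\sigma$-translates of the coefficients $c_{i,l}$ of $v_0$. This genericity is not a hypothesis but must be extracted from the normalization, since any failure would allow a $j\in J(F)$ with $v_\epsilon(\det j)>0$ satisfying $M\subseteq jM_0$, contradicting the assumption. Making this extraction rigorous—together with the $\epsilon$-adic convergence of the iterative cancellation—is the delicate technical heart of the proof.
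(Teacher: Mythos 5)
Your overall architecture (cyclic generator $v_0$ with $\underline\gamma(v_0)=\underline 0$ extracted from $a(M)=1$ plus the normalization, then explicit construction of the generators of $A_{\gen}$ and a counting bound for the reverse inclusion) starts correctly: the reduction to $\gamma_i(v_0)=0$ via the elements $\pi^{\underline u}$ is sound. But the two steps you yourself flag as the ``delicate technical heart'' are a genuine gap, and the mechanism you propose for closing them cannot work. The normalization $M\nsubseteq jM_0$ for $j\in J_b(F)$ with $v_\e(\det j)>0$ only constrains containments of $M$ in $J_b(F)$-translates of $M_0$; it is completely insensitive to algebraic relations among finitely many Teichm\"uller coefficients of $v_0$. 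Concretely, for $l=2$ and slope $1/2$ take $v_0=e_{1,0}+e_{2,0}+(\text{generic higher terms})$: the lattice it generates has $a=1$ and satisfies the normalization, yet the leading coefficients are equal, so your level-$mn$ twisted Vandermonde system degenerates --- any $(\lambda_j)$ killing component $1$ at level $mn$ also kills component $2$ there --- and your construction does not produce an element with $\underline\gamma=(\infty,mn)$. The conclusion of the proposition is nevertheless true in this example, but only because the existence of such elements is forced by the higher-order coefficients and the full Dieudonn\'e-module structure; this is exactly the content of Lemmas 4.8 and 4.13 of \cite{modpdiv}, which the paper invokes for the inclusion $A_{\gen}\subseteq A(M)$ and which cannot be replaced by a leading-coefficient genericity argument. (A secondary issue: even where your Vandermonde solution exists, killing the chosen $i$ components at level $imn$ does not preclude accidental vanishing at level $imn$ in the surviving components, so extra care is needed to obtain the exact $\gamma$-vector of the generator.)

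For the reverse inclusion $A(M)\subseteq A_{\gen}$ your pigeonhole sketch leans on the same unproven genericity and is not an argument yet. The paper's route avoids this entirely: it inducts on the number $l$ of simple summands, writes any $\tilde v\in M$ as $\Theta v+(\tilde v-\Theta v)$ with $\Theta\in\O_L[[\Phi,V]]$ whose lowest weighted term has degree $\gamma_l$, so that $\tilde v-\Theta v\in M\cap N^{\neq l}$, and uses that $\pi^{-\underline{mn}}M\cap N^{\neq l}$ again satisfies the hypotheses of the proposition for the smaller isocrystal (both facts from \cite{modpdiv}, Section 4); then $\underline\gamma(\tilde v)=\min(\underline\gamma(\Theta v),\underline\gamma(\tilde v-\Theta v))$ together with Lemma \ref{lemma_Agen} gives the claim with no genericity input. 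If you want a self-contained proof along your lines, you would have to reprove the relevant structural statements of \cite{modpdiv} rather than appeal to coefficient independence.
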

\begin{proof}By \cite{modpdiv} Lemma 4.8 and Lemma 4.13, each of the generators of $A_{\gen}$ is contained in $A(M)$. Hence $A_{\gen}\subseteq A(M)$. It remains to show $A(M)\subseteq A_{\gen}$. We use induction on $l$. Let $\gamma(\tilde{v})=(\gamma_1,\dotsc,\gamma_l)\in A(M)$ for some $\tilde{v}\in M\backslash\{0\}$. We want to show that $\gamma(\tilde v)\in A_{\gen}$. Up to permutation of the $l$ simple isocrystals inside $N$ (and as $A_{\gen}$ is invariant under this action of $S_l$), we may assume that $\gamma_l$ is maximal among the $\gamma_i$.

If $\gamma_l=\infty$, then
$\tilde{v}\in M\cap N^{\neq l}$ where $N^{\neq l}=\oplus_{i=1}^{l-1} N_{1i}$. Again by \cite{modpdiv}, 4 we have that $\pi^{-\underline{mn}}M\cap N^{\neq l}$ satisfies the assumptions of the proposition for the isocrystal $N^{\neq l}$. Here we use $\underline{mn}=(mn)_i$. By the induction hypothesis and Lemma~\ref{lemma_Agen}, $\gamma_i\in (mn+c'_i(\gamma)mn+m\mathbb{N} +n\mathbb{N})\cup\{\infty\}$ where $c'_i(\gamma):=\sharp\{j|1\leq j\leq l-1, \gamma_j>\gamma_i\}$. As $c_i(\gamma)-1\leq c'_i(\gamma)\leq c_i(\gamma)$ with $c_i(\gamma)$ defined in Lemma~\ref{lemma_Agen}, we have $\gamma_i\in (c_i(\gamma)mn+m\mathbb{N} +n\mathbb{N})\cup\{\infty\}$ and hence $\gamma\in A_{\gen}$.

Let now $\gamma_l\neq \infty$. As $M/(\Phi(M)+V(M))$ is one-dimensional, there is an element $v\in M$ such that $v$ generates $M$ as a Dieudonn\'e lattice. We have $\underline\gamma(v)=\underline 0$. Thus $\gamma_l=\underline\gamma(\tilde v)_l=mx_0+ny_0$ for some $x_0, y_0\in\mathbb{N}$. As before let $\Phi=b\sigma$, $V=\e\Phi^{-1}$. Then there exists
\[\Theta=[a_{x_0, y_0}]\Phi^{x_0}V^{y_0}+\sum_{x, y\in\mathbb{N}\atop mx+ny> \gamma_l}[a_{x, y}]\Phi^x V^y\in O_L[[\Phi, V]]\] with $a_{x,y}\in \overline{k}$ such that $\tilde v-\Theta v\in M\cap N^{\neq l}$ (\cite{modpdiv}, 4). If $\tilde v=\Theta v$, then $\gamma(\tilde v)=\gamma(\Theta v)=(\gamma_l,\dotsc, \gamma_l)\in A_{\gen}$. Otherwise $\tilde v\neq \Theta v$ and $\gamma(\tilde v-\Theta v)\in A_{\gen}$ by the same argument as for the case $\gamma_l=\infty$. Suppose $\gamma(\tilde v-\Theta v)=(\gamma'_1,\dotsc, \gamma'_l)$. For $1\leq i\leq l$, if $\gamma_i=\gamma_l$, then $\gamma'_i\geq \gamma_i$, and otherwise $\gamma'_i=\gamma_i$. Therefore
$\gamma(\tilde v)=\min(\gamma(\Theta v), \gamma(\tilde v-\Theta v))\in A_{\gen}$.
\end{proof}

Let $\tau_0=(b\sigma)^{h}\e^{-m}$ and $\tau_1=\pi=(b\sigma)^a\e^{a'}$ such that $a\in \Z$ is minimal positive such that $a'\in\Z$ with $am+a'h=1$ exists.
\begin{lemma}\label{lemma_description_jLambda_0} A lattice $M$ in the isocrystal $(L^h, b\sigma)$ is of the form $M=jM_0$ with $j\in J_b(F)$ if and only if $M$ is stable under $\tau_0$ and $\tau_1$.
\end{lemma}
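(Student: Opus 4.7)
\smallskip
\noindent\textit{Proof plan.} The plan is to reinterpret the two stability conditions geometrically: $\tau_0$-stability as Frobenius descent from $\mathcal{O}_L$ to $\mathcal{O}_{F_h}$, and $\tau_1$-stability as a module structure over the maximal order $\mathcal{O}_{D_\lambda}$ of the division algebra $D_\lambda$. Since $J_b(F) = GL_l(D_\lambda)$ acts transitively on rank-$l$ free $\mathcal{O}_{D_\lambda}$-lattices, the equivalence will then follow.

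For the forward direction, I would first translate the defining equation of $J_b(F)$ into the operator-theoretic statement that every $j \in J_b(F)$ commutes with $b\sigma$ on $N$; since $j$ also commutes with the central scalar $\epsilon$, it commutes with the commutative $F$-algebra $F[b\sigma, \epsilon^{\pm 1}]$, and in particular with both $\tau_0$ and $\tau_1$. Writing $M = jM_0$, this reduces the forward direction to verifying $\tau_0 M_0 = M_0$ (immediate, since $\tau_0$ is $\sigma^h$-semilinear and fixes each basis vector $e_{z,i,l}$, so it acts on $\mathcal{O}_L$-combinations by applying $\sigma^h$ to coefficients) and $\tau_1 M_0 \subseteq M_0$ (from $\tau_1 e_{z,i,l} = e_{z,i,l+1} \in M_0$ for $l \geq 0$).

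For the converse, suppose $\tau_0 M \subseteq M$ and $\tau_1 M \subseteq M$. A direct calculation gives $\tau_0(gM_0) = \sigma^h(g)\, M_0$ for any $g \in G(L)$, so $\tau_0$ preserves lattice volumes, forcing the first inclusion to be an equality $\tau_0 M = M$. Frobenius descent along the pro-cyclic extension $L/F_h$ (topologically generated by $\sigma^h$) then identifies $M$ with $(M \cap N_0) \otimes_{\mathcal{O}_{F_h}} \mathcal{O}_L$, where $N_0 := N^{\tau_0}$ is an $F_h$-vector space of dimension $lh$ and $M \cap N_0$ is a full-rank $\mathcal{O}_{F_h}$-lattice.

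The remaining step is to show that $M \cap N_0$ is an $\mathcal{O}_{D_\lambda}$-lattice and to invoke transitivity. On $N_0$ the operator $(b\sigma)^h$ acts as the $F_h$-linear scalar $\epsilon^m$, and the relation $am + a'h = 1$ yields $\pi^h = \epsilon$ together with $\pi \alpha = \sigma^a(\alpha)\pi$ for $\alpha \in F_h$. The twisted polynomial ring $\mathcal{O}_{F_h}\langle \pi \rangle / (\pi^h - \epsilon,\ \pi\alpha - \sigma^a(\alpha)\pi)$ is the maximal order in the cyclic algebra $[F_h/F,\, \sigma^a,\, \epsilon]$, whose Hasse invariant is $a^{-1}/h = m/h$ (using $am \equiv 1 \pmod h$); this ring is precisely $\mathcal{O}_{D_\lambda}$. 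The $\pi$-stability of $M$ gives $\pi(M \cap N_0) \subseteq M \cap N_0$, making $M \cap N_0$ an $\mathcal{O}_{D_\lambda}$-submodule of $N_0 \cong D_\lambda^l$; being finitely generated and torsion-free over a maximal order in a local division algebra, it is free of rank $l$. The transitive action of $GL_l(D_\lambda) = J_b(F)$ on such lattices then yields $j \in J_b(F)$ with $M \cap N_0 = j(M_0 \cap N_0)$, and tensoring with $\mathcal{O}_L$ finishes the proof. The main technical point I expect to require care is the identification of the twisted polynomial algebra with $\mathcal{O}_{D_\lambda}$ of the correct Hasse invariant, which relies on the precise relation $am \equiv 1 \pmod h$ between the semilinearity index of $\pi$ and the slope numerator; the Frobenius descent along the infinite extension $L/F_h$ is the other point requiring some care, though it is standard.
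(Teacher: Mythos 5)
Your argument is correct, but it follows a different route from the paper's. The paper proves the converse direction by an explicit basis construction: it chooses $\tau_0$-invariant elements $f_{i0}\in M$ whose classes form a basis of the $\bar k$-vector space $M/\tau_1 M$, sets $f_{ij}=\tau_1^j(f_{i0})$, and checks, using the relations $b\sigma=\tau_1^m\tau_0^c$ and $\tau_1^n=\tau_0^a\e$, that the $f_{ij}$ form a basis of $M$ satisfying the same semilinear relations as the $e_{ij}$, so that the base change matrix lies in $J_b(F)$. You instead first upgrade $\tau_0 M\subseteq M$ to $\tau_0M=M$ by a volume argument, descend along $L/F_h$ to the $\tau_0$-fixed lattice $M\cap N_0$, identify $\O_{F_h}\langle\pi\rangle$ (with $\pi^h=\e$, $\pi\alpha=\sigma^a(\alpha)\pi$) as the maximal order of $D_\lambda$ via $am\equiv 1\pmod h$, and conclude by freeness of torsion-free modules over this noncommutative DVR and transitivity of $GL_l(D_\lambda)$ on free rank-$l$ lattices. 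Both proofs ultimately rest on the same descent input (existence of enough $\tau_0$-fixed vectors, i.e.\ Lang's theorem over the algebraically closed residue field); the paper's version is shorter and entirely hands-on, while yours is more structural and makes transparent why $J_b(F)\cong GL_l(D_\lambda)$ is the relevant symmetry group. The one step you should make explicit is that the transitivity argument really produces an element of $J_b(F)$: you need that every $D_\lambda$-linear automorphism of $N_0$, extended $L$-linearly to $N$, commutes with $b\sigma$; this follows because on $N_0$ one has $b\sigma=\pi^m$ (using $am=1-a'h$ and $\tau_0|_{N_0}=\id$), so $b\sigma|_{N_0}\in D_\lambda$, and $N_0$ spans $N$ over $L$. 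With that remark included, your proof is complete.
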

\begin{proof} One direction follows as $M_0$ is stable under $\tau_0$ and $\tau_1$, and elements of $J_b(F)$ commute with powers of $\e$ and of $b\sigma$.

For the other direction let as before $e_{ij}$ ($i=1,\dotsc, l$, $j=0,\dotsc,h-1$) be a basis of $M_0$ with $b\sigma(e_{ij})=e_{ij+m}$ (with the usual convention), i.e. $\pi^l(e_{ij})=e_{ij+l}$. We need to find a basis of $M$ with the same properties (as then the base change matrix is in $J_b(F)$). Choose $\tau_0$-invariant elements $f_{i0}$ of $M$ whose classes form a basis of the $\tau_0$-invariant $\bar k$-vector space $M/\tau_1M$. Let $f_{ij}=\tau_1^j(f_{i0})$ for $1\leq j\leq h_1$. Notice that $b\sigma=\tau_1^m\tau_0^c$ and $\tau_1^n=\tau_0^a\e$. Using this one  easily checks that this yields a system of generators with the claimed properties.
\end{proof}

\begin{prop}
Given an isoclinic isocrystal $(N,b\sigma)$ there is a function $f:J_b(F)\rightarrow X_*(T)_{\dom}$ such that $g\in X_{\mu}(b)$ has $a$-invariant 1 if and only if $f_g$ lies in the $J_b(F)$-orbit of $F$.
\end{prop}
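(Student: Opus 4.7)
I would verify the isoclinic case of Conjecture \ref{conjaf} by combining the multi-semimodule description of the stratum with a characterization of $a(M)=1$ as $\mathcal{O}_L[\Phi,V]$-cyclicity. The argument breaks into three pieces, carried out in the order below.

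\textbf{Construction and forward direction.} I would first fix a Dieudonn\'e lattice $M_{\gen}\subseteq M_0$ with $a(M_{\gen})=1$ satisfying the minimality hypothesis of Proposition \ref{lemma_generic_multi-sm} (such lattices exist since $a=1$ is generic in $X_\mu(b)$ for basic $b$). Write $M_{\gen}=g_{\gen}M_0$ and set $f:=f_{g_{\gen}}$. Now let $g\in X_\mu(b)$ satisfy $a(gM_0)=1$. Since the $a$-invariant is constant on $J_b(F)$-orbits, I may replace $g$ by $j_0 g$ for some $j_0\in J_b(F)$ so that $j_0 gM_0$ is minimally contained in $M_0$ in the sense of Proposition \ref{lemma_generic_multi-sm}. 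That proposition then gives $A(j_0 gM_0)=A_{\gen}$ for every choice of isoclinic decomposition of $N$ and basis of the simple summands, as both hypotheses are intrinsic to the lattice. By Lemma \ref{remark_inv_multi-sm} applied to $j_0 g$ and $g_{\gen}$ this forces $f_{j_0 g}=f_{g_{\gen}}=f$, hence $f_g\in J_b(F)\cdot f$.

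\textbf{Converse direction.} Suppose $f_g\in J_b(F)\cdot f$; translating by some element of $J_b(F)$, we may assume $f_g=f$. Lemma \ref{remark_inv_multi-sm} yields $A(gM_0)=A(g_{\gen}M_0)=A_{\gen}$. Since $\underline 0\in A_{\gen}$, there exists $v\in gM_0$ with $\gamma(v)=\underline 0$. Let $M^v\subseteq gM_0$ be the $\mathcal{O}_L$-submodule spanned by $\{\Phi^x V^y v : x,y\geq 0\}$; it is stable under $\Phi$ and $V$. The crux is to show $M^v=gM_0$: once this is known, the image of $v$ generates $gM_0/(\Phi gM_0+V gM_0)$, so $a(gM_0)\leq 1$, and combined with $a(gM_0)\geq 1$ for a non-ordinary isoclinic isocrystal we conclude $a(gM_0)=1$. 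To obtain $M^v=gM_0$, I would apply Corollary \ref{koracontained}, for which it suffices to establish $A(M^v)=A_{\gen}$. The inclusion $A(M^v)\subseteq A_{\gen}$ is automatic, while the reverse inclusion would come from Proposition \ref{lemma_generic_multi-sm} applied to $M^v$ itself, which is $\mathcal{O}_L[\Phi,V]$-cyclic and so trivially has $a\leq 1$.

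\textbf{Main obstacle.} The hard part is invoking Proposition \ref{lemma_generic_multi-sm} on $M^v$: two non-trivial checks are required. First, $M^v$ must be a full-rank $\mathcal{O}_L$-lattice in $N$; the condition $\gamma(v)=\underline 0$ by itself does not guarantee this, as simple rank-two examples show. One needs to extract a better-chosen $v$ from the relation $A(gM_0)=A_{\gen}$, for instance by mimicking the arguments of \cite{modpdiv} Lemma 4.8 cited in the proof of Proposition \ref{lemma_generic_multi-sm}, which produce a cyclic generator of the full lattice from the structure of $A_{\gen}$. Second, one must verify the minimality condition $M^v\not\subseteq jM_0$ for all $j$ with $v_{\epsilon}(\det j)>0$; writing $j=k\pi^{\underline u}k'$ with $k,k'\in J_b(F)\cap K$, this reduces to the statement that $v\in k\pi^{\underline u}M_0$ together with $\gamma(v)=\underline 0$ forces $\sum u_i\leq 0$, which is immediate in the diagonal case and requires a compatibility check for the action of $K\cap J_b(F)$ on multi-semimodules in general. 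Handling these two points carefully completes the proof.
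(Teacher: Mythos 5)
Your toolkit (Proposition \ref{lemma_generic_multi-sm}, Lemma \ref{remark_inv_multi-sm}, Corollary \ref{koracontained}) is the same as the paper's, but both directions have genuine gaps. In the forward direction, the step ``Proposition \ref{lemma_generic_multi-sm} gives $A(j_0gM_0)=A_{\gen}$ for every choice of decomposition and basis, as both hypotheses are intrinsic to the lattice'' is false: the hypotheses of that proposition are relative to the standard lattice $M_0$ built from the chosen basis, and a change of basis replaces $M_0$ by some $j'M_0$ with $j'\in J_b(F)$, which destroys the minimal-containment hypothesis. Already for $l=1$ and slope $1/2$, replacing $e_{10}$ by the admissible choice $e'_{10}=e_{11}=\pi e_{10}$ turns the semimodule of $M_0$ (which has $a=1$ and is minimally contained in $M_0$) into $\Z_{\geq -1}\neq A_{\gen}$. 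The statement you actually need --- that for \emph{every} basis the multi-semimodules of $j_0gM_0$ and $g_{\gen}M_0$ agree --- is, via Lemma \ref{remark_inv_multi-sm}, equivalent to the equality $f_{j_0g}=f_{g_{\gen}}$ you are trying to prove, so the argument is circular at exactly this point. The paper closes the hole by writing $j=k_1\pi^{\underline{u}}k_2$ with $k_1,k_2\in J_b(F)\cap K$ (Cartan decomposition of $J_b(F)$) and noting that the hypotheses of Proposition \ref{lemma_generic_multi-sm} are preserved by $k_1$, since $k_1$ stabilizes $M_0$ and commutes with $\Phi$ and $V$; hence $f_g(j)$ is computed from the multi-semimodule of $k_1^{-1}gM_0$, which is again $A_{\gen}$. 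Only this invariance under $K\cap J_b(F)$, not under all of $J_b(F)$, is true, and it suffices.

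In the converse direction your text is an announced plan whose decisive steps you leave open yourself: that $M^v$ is a full lattice and that $M^v\not\subseteq jM_0$ for all $j$ with $v_\e(\det j)>0$. As you note, $\gamma(v)=\underline{0}$ guarantees neither (for instance $N=N_1\oplus N_1$ with $v$ in a diagonal copy); but these verifications are the actual content of the converse, and ``mimicking \cite{modpdiv} Lemma 4.8'' or ``a compatibility check'' is not a proof. The paper avoids the problem by arguing contrapositively inside the normalized lattice: if $a(M)>1$, pick a sufficiently general $v\in M$ (general precisely so that $v$ lies in no $jM_0$ with $v_\e(\det j)>0$), let $M'$ be the Dieudonn\'e lattice it generates; then $M'\subsetneq M$ while $M'$ satisfies the hypotheses of Proposition \ref{lemma_generic_multi-sm}, so $A(M')=A_{\gen}$, and Corollary \ref{koracontained} forces $A(M)\neq A_{\gen}$. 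Combined with the forward direction this yields the equivalence without ever having to show directly that a lattice with multi-semimodule $A_{\gen}$ is cyclic. If you insist on your direct route, the genericity of $v$ (not merely $\gamma(v)=\underline{0}$) must carry the argument, and you end up reproducing essentially this reasoning; as written, the proposal does not prove the converse.
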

\begin{proof} We (may) assume that we are not in the case that $b$ has constant Newton slope 0 or 1 (otherwise there are no elements in $X_{\mu}(b)$ with $a$-invariant 1).

Let $M=gM_0\subset N$ be any lattice. By Lemma \ref{lemma_description_jLambda_0} we see that there is a unique minimal lattice $jM_0$ containing $M$. Notice that this lattice $jM_0$ can be determined from $f_g$. By replacing $g$ with $j^{-1}g$ (which leaves the $J_b(F)$-orbit of $f_g$ invariant) one may assume that $M\subseteq M_0$ and that $M$ is not contained in any lattice $jM_0$ for any $j\in J_b(F)$ with $v_\e(\det j)>0$.

 We first show that all $f_g$ with $a(gM_0)=1$ are in one $J_b(F)$-orbit. Suppose $M=gM_0$ with $a(M)=1$ and assume that $M\subseteq M_0$ and $M\nsubseteq jM_0$ for all $j\in J_b(F)$ with $v_\e(\det j)>0$. Then it is enough to show that $f_g$ is a function not further depending on $M$. By the Cartan decomposition we have
\[J_b(F)=\prod_{z}GL_{l_z}(D_{\lambda_z})=\bigcup_{\underline{u}\in\Z^{\sum_z l_z}\atop u_{z1}\geq\cdots\geq u_{zl_z} \forall z} (\prod_z GL_{l_z}(\O_{D_{\lambda_z}}))\pi^{\underline{u}}(\prod_z GL_{l_z}(\O_{D_{\lambda_z}})).\]
Let $j=k_1\pi^{\underline{u}}k_2$ be the decomposition of some $j\in J_b(F)$. Then in order to compute $f_g(j)$, it is enough to compute $\vol(j^{-1}gM_0\cap \e^iM_0)$ for all $i$. We have
\begin{eqnarray*}
\vol(j^{-1}gM_0\cap \e^iM_0)&=&\vol(k_2^{-1}\pi^{-\underline{u}}k_1^{-1}gM_0\cap \e^iM_0)\\
&=&\vol(\pi^{-\underline{u}}k_1^{-1}gM_0\cap \e^iM_0)\\
&=&\vol(k_1^{-1}gM_0\cap(\pi^{\underline{u}}\e^i)M_0)-v_{\e}(\det \pi^{\underline u}).
\end{eqnarray*}
Notice that $\pi^{\underline{u}}\e^i$ is of the form $\pi^{\underline{u'}}$ for some $\underline{u'}$, thus it is enough to compute the multi-semimodule of $k_1^{-1}gM_0$. However, as $k_1\in J_b(F)\cap K$, this lattice still has $a$-invariant $1$, is contained in $M_0$, and not in $jM_0$ for any $j\in J_b(F)$ with $v_\e(\det j)>0$. Thus this multi-semimodule is the one given by Proposition \ref{lemma_generic_multi-sm}.

Let now $M=gM_0$ with $g\in X_{\mu}(b)$ and $a(M)>1$. By replacing $g$ within its $J_b(F)$-orbit we may again assume that $M\subseteq M_0$ and that $M$ is not contained in any lattice $jM_0$ for any $j\in J_b(F)$ with $v_\e(\det j)>0$. We have to show that the multi-semimodule of our renormalized $M$ is not equal to $A_{\gen}$.

Let $v\in M$ be not contained in any lattice $jM_0$ for any $j\in J_b(F)$ with $v_\e(\det j)>0$ (any sufficiently general element of $M$ will do). Let $M'$ be the Dieudonn\'e lattice generated by $v$.

By our choice of $v$ we have $M'\subset M\subseteq M_0$ with the first containment being a strict one as $a(M)>1$. Further, $M'$ is not contained in any lattice $jM_0$ for any $j\in J_b(F)$ with $v_\e(\det j)>0$. By Proposition \ref{lemma_generic_multi-sm}, $A(M')=A_{\gen}$. By Corollary \ref{koracontained}, $A(M)\neq A(M')$ which finishes the proof.
\end{proof}

\bigskip
\obeylines
Department of Mathematics
Shanghai Key Laboratory of PMMP
East China Normal University
No. 500, Dong Chuan Road
Shanghai, 200241, P.R.China
email: mfchen@math.ecnu.edu.cn

\bigskip
\obeylines
Technische Universit\"at M\"unchen
Zentrum Mathematik -- M11
Boltzmannstr. 3
85748 Garching, Germany
email: viehmann@ma.tum.de

\end{document}